\documentclass[11pt]{amsart}

\usepackage[marginparwidth=75pt]{geometry}

\usepackage{amsmath, amsthm, amssymb}

\usepackage{palatino}
\usepackage{enumitem}

\usepackage[abs]{overpic}
\usepackage{tikz}
\usetikzlibrary{calc, positioning}

\usepackage{xcolor}
\definecolor{indigo}{rgb}{0.29, 0.0, 0.51}
\definecolor{dred}{RGB}{237, 28, 36}
\usepackage[colorlinks, urlcolor=indigo, linkcolor=indigo, citecolor=indigo]{hyperref}

\theoremstyle{plain}
\newtheorem{theorem}{Theorem}[section]
\newtheorem{corollary}[theorem]{Corollary}
\newtheorem{proposition}[theorem]{Proposition}
\newtheorem{lemma}[theorem]{Lemma}

\theoremstyle{definition}
\newtheorem{definition}[theorem]{Definition}

\theoremstyle{remark}
\newtheorem{remark}[theorem]{Remark}
\newtheorem{example}[theorem]{Example}

\newcommand{\dfn}[1]{{\em #1}}
\newcommand{\R}{\mathbb{R}}
\newcommand{\Q}{\mathbb{Q}}
\newcommand{\Z}{\mathbb{Z}}

\DeclareMathOperator\tb{tb}
\DeclareMathOperator\rot{rot}

\begin{document}

\title{The Gompf $\theta$-Invariant of Canonical Contact Structures via Legendrian Surgery}

\author{Mohan Bhupal}
\address{Department of Mathematics, METU, Ankara, Turkey}
\email{bhupal@metu.edu.tr}

\author{Burak Ozbagci}
\address{Department of Mathematics, Ko\c{c} University, Istanbul, Turkey}
\email{bozbagci@ku.edu.tr}

\subjclass[2020]{57K33, 14B05}

\begin{abstract}
Let $\Gamma$ be a minimal connected negative-definite plumbing tree
with all vertices of genus zero, and let $Y_\Gamma$ be the oriented
link of the corresponding normal complex surface singularity, equipped
with its canonical contact structure $\xi_{\rm can}$. We give
an explicit Legendrian surgery description of $\xi_{\rm can}$,
showing that it is the unique consistent diagram-realizable contact
structure on $Y_\Gamma$, up to isomorphism.  We then derive a closed-form formula for
Gompf's $\theta$-invariant of $\xi_{\rm can}$ in the Seifert
fibered case, expressed purely in terms of the Hirzebruch--Jung
continued fraction expansions of the normalized Seifert invariants,
and prove a recursive leaf-to-root formula for arbitrary plumbing
trees. The Seifert formula recovers previously known formulas for
lens spaces, dihedral manifolds, and small Seifert fibered spaces
with complementary legs, and agrees with the N\'emethi--Nicolaescu
expression via the classical Hirzebruch--Zagier identity. As a final
application we show that $\xi_{\rm can}$ strictly minimizes
$\theta$ among all diagram-realizable contact structures on
$Y_\Gamma$, and we use this to rule out symplectic rational homology
ball fillings for a large class of Stein fillable contact rational
homology $3$-spheres.
\end{abstract}

\maketitle

\section{Introduction}\label{sec:intro}

Any minimal connected negative-definite plumbing tree $\Gamma$ with
all vertices of genus zero arises, by Grauert's contractibility
criterion~\cite{Grauert1962}, as the dual resolution graph of a normal
complex surface singularity. The oriented link $Y_\Gamma$ of this
singularity is a rational homology $3$-sphere carrying a canonical
contact structure $\xi_{\rm can}$ given by the field of
complex tangencies, uniquely determined up to isomorphism~\cite{CaubelNemethiPopescu-Pampu2006}.

In~\cite[Theorem~8.1]{BhupalOzbagci11}, we obtained an explicit Legendrian surgery
description of the canonical contact structure  in the
special case where $\Gamma$ is a star-shaped plumbing tree with three
legs, using the classification of fillable contact structures on the
Seifert fibered space $Y_\Gamma$. The first goal of the present paper
is to extend this description to arbitrary plumbing trees as above,
without relying on a classification of fillable contact structures, which
is unavailable in this generality.  Note, however,  that Cavallo and
Matkovi\v{c}~\cite{CavalloMatkovic2026b} have recently classified the
symplectically fillable contact structures on
negative-definite Seifert fibered spaces.

\begin{theorem}\label{thm:consistent-intro}
Let $\Gamma$ be a minimal connected negative-definite plumbing tree
with all vertices of genus zero. Then the consistent
diagram-realizable contact structure on $Y_\Gamma$ is the canonical
contact structure $\xi_{\rm can}$, up to isomorphism.
\end{theorem}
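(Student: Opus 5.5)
Here is the plan. We first fix the setup. The plumbing tree $\Gamma$ gives a surgery diagram for $Y_\Gamma$: an unknot $K_v$ with framing $e_v$ for each vertex, Hopf-linked along the edges. Since $\Gamma$ is minimal, negative definite and of genus zero, every $e_v\le -1$, and a $(-1)$-vertex has degree at least three. A contact structure is diagram-realizable if it comes from Legendrian realizations of the $K_v$ with $\tb(K_v)=e_v+1$ and some choice of stabilizations, i.e. rotation numbers $\rot(K_v)\in\{e_v+2, e_v+4,\dots,-e_v-2\}$. Legendrian surgery then yields a Stein filling $W$, namely the plumbing 4-manifold. For $(-1)$-vertices some preliminary handling of the $\tb=0$ unknot is needed, presumably via the contact $(+1)$ / rational surgery conventions set up earlier. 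The consistent choice is the one in which every stabilization is made on the same side, say all rotation numbers equal to $-(e_v+2)$. Equivalently, $c_1(J)$ evaluates on each sphere $S_v$ as $\langle c_1(J),S_v\rangle = e_v+2$, up to a global sign that gives an isomorphic (conjugate) structure. This is exactly the adjunction formula $\langle c_1, S_v\rangle = S_v\cdot S_v + 2$ for a genus zero curve.

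The key step is to compare this with the Milnor fiber side. Take the minimal good resolution $\widetilde X$ of the singularity with exceptional divisor $E=\bigcup E_v$, where the $E_v$ are rational curves of self-intersection $e_v$. A regular neighborhood $N$ of $E$ is a Stein-deformable (pseudoconvex) filling of $(Y_\Gamma,\xi_{\rm can})$. By Bogomolov--de Oliveira, $N$ can be deformed to a Stein domain while keeping the boundary contact structure isomorphic to $\xi_{\rm can}$. Its canonical class satisfies adjunction on each $E_v$, so $\langle c_1(N),E_v\rangle = e_v+2$. We then compare the two Stein fillings. $N$ and $W$ are diffeomorphic plumbings, and under this identification the first Chern classes agree on $H_2$. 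Because the intersection form is nondegenerate over $\Q$ and $H^2$ of a plumbing is torsion free, the Chern classes agree in $H^2(W;\Z)$.

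To conclude, we need that the Stein structure is determined by $c_1$ here, or at least that the boundary contact structures are isomorphic. I would invoke Gompf/Lisca--Matić style results: Chern classes distinguish the induced contact structures only in one direction, so I need more than that. The cleanest route I expect is the following. Show that $N$ with its deformed Stein structure is itself Stein homotopic to a Legendrian surgery on a Legendrian realization of the plumbing link. This is done by realizing each $E_v$ as a totally real/Lagrangian-like core via a handle decomposition, which yields $\tb=e_v+1$. The rotation numbers are then forced by $c_1$ to be $\pm(e_v+2)$. Within each fixed rotation number, stabilization choices for an unknot with $\tb$ and $\rot$ prescribed are Legendrian isotopic by Eliashberg--Fraser. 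The linking pattern in a tree can be arranged so that different arrangements of the chains are Legendrian isotopic as links, using the fact that the tree has no cycles and each component is an unknot clasped to its neighbors.

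The main obstacle is this Legendrian-isotopy step for the whole link together with the identification of the deformed $N$ with a Legendrian surgery. If the direct handle argument stalls, I would first try a different route. I would show that $\xi_{\rm can}$ is Milnor fillable and that its contact invariants match. I would also use Plamenevskaya's result that Stein fillings with distinct $c_1$ give non-isotopic structures, together with the computation of the $d_3$/Hopf invariant via $c_1^2$, which equals $K^2$ of the resolution for $\xi_{\rm can}$. This pins down the homotopy class of the plane field. Then uniqueness up to isomorphism among diagram-realizable structures, which are all Stein fillable by the same $W$, follows because the $c_1$ classes of the other diagram choices differ from $\pm K$ on some sphere. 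Those choices are therefore not isomorphic to the consistent one, while $\xi_{\rm can}$ is some diagram-realizable structure, which requires the handle argument above, with the Chern class $\pm K$.
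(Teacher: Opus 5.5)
Your main route is essentially the paper's proof. You deform the resolution neighborhood of $E$ to a Stein structure by Bogomolov--de Oliveira, then present it as Legendrian surgery on a Legendrian realization of the plumbing link with $\tb(L_v)=1-a_v$; the paper takes this step, which you flag as the main obstacle, directly from Eliashberg's theorem and Gompf's Theorem~1.3. Finally, Gompf's formula $\rot(L_v)=\langle c_1,[C_v]\rangle$ and adjunction on the complex-oriented curves $C_v$ force all rotation numbers to be $-(a_v-2)$ with a uniform sign, and reversing all orientations gives the isomorphic all-maximal diagram.

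Your detour through $(-1)$-vertices and contact $(+1)$ surgery is unnecessary, since ``minimal'' here means every $a_v\ge 2$. The Plamenevskaya fallback could not by itself show that $\xi_{\rm can}$ is diagram-realizable, so the handle-decomposition step is indispensable in either version.
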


This surgery description has two consequences. The first is a
closed-form expression for Gompf's
$\theta$-invariant~\cite{Gompf98} of $\xi_{\rm can}$ in the
Seifert fibered case.

\begin{theorem}\label{thm:intro-main-seifert}
Let $Y=Y(e_0;r_1,\dots,r_k)$ be a Seifert fibered rational homology
$3$-sphere with normalized Seifert invariants $r_i\in\Q\cap(0,1)$,
central weight $e_0\le -1$, and negative rational Euler number
$e(Y)=e_0+r_1+\cdots+r_k<0$. For each $i=1,\dots,k$, write
\[
\frac{1}{r_i}=\frac{p_i}{q_i}=[a_1^i,\dots,a_{n_i}^i],
\qquad a_j^i\ge 2,
\]
where $p_i,q_i$ are coprime integers with $0<q_i<p_i$. Let $q_i^*$ be
the multiplicative inverse of $q_i$ modulo $p_i$, and set
$I(p_i/q_i)=\sum_{j=1}^{n_i}(a^i_j-3)$. Then
\[
\theta(\xi_{\rm can})
=
(2k-1)
-\sum_{i=1}^k\!\left(I(p_i/q_i)+\frac{q_i+q_i^*+2}{p_i}\right)
+
\frac{1}{e(Y)}\!\left(-e_0+k-2-\sum_{i=1}^k \frac{q_i+1}{p_i}\right)^{\!2}.
\]
\end{theorem}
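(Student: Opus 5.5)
We will use Theorem~\ref{thm:consistent-intro}. It realizes $\xi_{\rm can}$ by Legendrian surgery on a linear-chain-type Legendrian link $\mathbb{L}$ read off from the star-shaped plumbing graph of $Y$. That graph has a central vertex of weight $e_0$ and $k$ legs whose weights are $-a_1^i,\dots,-a_{n_i}^i$. Each vertex of weight $-a$ becomes a Legendrian unknot with $\tb=-a+1$. It is stabilized $a-2$ times, with the stabilizations chosen in the ``consistent'' way, which we take to mean all on one side. The central unknot has $\tb=e_0+1$ and $|e_0|-2$ stabilizations; if $e_0=-1$ we first handle it by the standard modification. The resulting Stein domain $X$ is the plumbing, so
\[
\theta(\xi_{\rm can})=c^2-3\sigma(X)-2\chi(X),
\qquad \chi(X)=1+n,\qquad \sigma(X)=-n,
\]
where $n=1+\sum_i n_i$ is the number of vertices. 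Thus $-3\sigma-2\chi=n-2$. The main task is to compute $c^2=\mathbf{r}^T Q^{-1}\mathbf{r}$, where $Q$ is the plumbing intersection form and $\mathbf{r}$ is the vector of rotation numbers. With the consistent choice, $r_v=-(w_v+2)=a_v-2$ up to a global sign. In other words, $\mathbf{r}$ is the adjunctive vector $K$ restricted to the vertices, since $K\cdot E_v=-E_v^2-2$. Hence $c^2=K^2$ in the usual sense, and the computation reduces to the classical one for the canonical class of a star-shaped plumbing.

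To compute $\mathbf r^TQ^{-1}\mathbf r$ we solve $Qx=\mathbf r$ leg by leg. On leg $i$ the matrix is the tridiagonal chain with diagonal entries $-a_j^i$, and the right-hand side is $a_j^i-2$. Write $x=-\mathbf 1+y$, using that $Q\mathbf 1$ on a chain has entries $-a_j+2$ in the interior and corrections at the ends. Then $y$ solves a chain system whose only sources sit at the two ends of the leg. One end is the central vertex with value $x_0$; the other end is the far vertex with a source of $+1$ from the missing neighbour. Chain systems with end sources are solved by the continued-fraction numerators and denominators. The value adjacent to the centre and the quadratic form contribution are then expressed through $p_i$, $q_i$ and $q_i^*$, using the standard facts that $[a_1,\dots,a_n]=p/q$ and that the reversed chain gives $p/q^*$. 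The central equation then reads
\[
e_0x_0+\sum_i(\text{neighbour values})=e_0+2 \text{ (shifted)}.
\]
Solving it yields $x_0$ with denominator $e(Y)=e_0+\sum q_i/p_i$. The squared term $\frac{1}{e(Y)}\bigl(-e_0+k-2-\sum(q_i+1)/p_i\bigr)^2$ arises exactly from this Schur-complement step: the central vertex is eliminated after each leg has been replaced by its effective weight $-q_i/p_i$, and the effective right-hand side is $-e_0-2+\sum_i(1-(q_i+1)/p_i)$.

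The remaining leg-internal contributions are each a quadratic form of the right-hand side against the inverse of the chain matrix. They should evaluate to
\[
-\sum_j(a_j-2)+2-\frac{q_i+q_i^*+2}{p_i}
\]
or similar. Combined with $n-2=\sum_i n_i-1$ and $-\sum(a_j-2)+n_i=-I(p_i/q_i)$, this produces $(2k-1)-\sum_i\bigl(I+\frac{q_i+q_i^*+2}{p_i}\bigr)$.

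The main obstacle is this leg computation. We must establish the identity $\mathbf 1^TM^{-1}\mathbf 1$-type sums for a chain $M$ with end corrections, which requires the entries $(M^{-1})_{1,1}=q/p$, $(M^{-1})_{n,n}=q^*/p$ and $(M^{-1})_{1,n}=1/p$, and we must track constants carefully. We would first prove the lemma for one leg by induction on $n_i$, using the recursions of $p$, $q$ and $q^*$ under prepending an entry $a$. We would then check the full formula against the lens space case $k\le 2$ as a sanity test.
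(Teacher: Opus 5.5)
For $e_0\le -2$ your plan is the paper's proof. You use Gompf's formula $c_1^2=\mathbf z^TQ_\Gamma^{-1}\mathbf z$ for the consistent diagram, with $\mathbf z=K$ in the dual basis. You eliminate the central vertex by a Schur complement with $\Delta=e(Y)$ and effective right-hand side $-e_0-2-\sum_i(q_i+1-p_i)/p_i$. You then apply the leg identity $\mathbf w^TA^{-1}\mathbf w=2-\sum_j(a_j-2)-(q+q^*+2)/p$. Your shift $x=-\mathbf 1+y$, combined with $(M^{-1})_{11}=q/p$, $(M^{-1})_{nn}=q^*/p$ and $(M^{-1})_{1n}=1/p$, is the same as the paper's argument in Lemma~\ref{lem:leg}: sum the linear equations, then compute $c_1$ and $c_n$ by Cramer's rule.

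The gap is the case $e_0=-1$, which the statement explicitly includes. There the central vertex would need a Legendrian unknot with $\tb=0$, which does not exist in $(S^3,\xi_{\rm std})$. So Theorem~\ref{thm:consistent-intro} gives no Legendrian surgery diagram on $\Gamma$, and your ``standard modification'' is left unspecified. The obvious candidate, blowing down the central $(-1)$-sphere, does not serve your computation. For $k\ge 3$ it links the first vertices of the legs pairwise and raises their framings by one, so the result is no longer a plumbing tree, and the leg-by-leg Schur complement no longer applies.

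The fix, which is what the paper does (Remark~\ref{rem:e0-minus-one}), is to bypass Legendrian surgery in this case. The resolution neighbourhood is itself a complex manifold with strictly pseudoconvex boundary inducing $\xi_{\rm can}$, so it is an admissible almost complex filling in Gompf's definition. Its first Chern class is $c_1=-K$, with $K$ fixed by adjunction, which gives $\theta(\xi_{\rm can})=K^2+N-2$ (cf.\ \cite[Remark~4.8]{NemethiNicolaescu2002}). You already noted that $\mathbf r=K$. Your linear algebra uses only the invertibility of the leg matrices and $\Delta=e(Y)\neq 0$, never $e_0\le -2$. So the identical computation of $K^2$ then covers $e_0=-1$.
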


The proof of Theorem~\ref{thm:intro-main-seifert} reduces
$\theta(\xi_{\rm can})$ to an inverse-intersection form computation
on the plumbing $4$-manifold. This is evaluated by combining the
tridiagonal matrix identity of Lemma~\ref{lem:leg} with the Schur
complement formula of Lemma~\ref{lem:schur-k}. The case $e_0\le -2$
proceeds directly from Gompf's handlebody formula via the Legendrian
surgery description of Theorem~\ref{thm:consistent-intro}; the case
$e_0=-1$, where the central vertex is not Legendrian-realizable,
follows from the same algebraic identity together with the
topological identity $\theta(\xi_{\rm can})=K_{\rm can}^2+\#V-2$
(Remark~\ref{rem:e0-minus-one}).
In Section~\ref{sec:recover} we show that
Theorem~\ref{thm:intro-main-seifert} specializes to the lens space
and dihedral formulas of
\cite[Propositions~9.3 and~9.8]{EtnyreOzbagciTosun2025} and to the
complementary-legs formula of
\cite[Proposition~4.4]{EtnyreOzbagciTosun2026}. In
Section~\ref{sec:nn} we show that after matching conventions, the
formula in Theorem~\ref{thm:intro-main-seifert} agrees with the
N\'emethi--Nicolaescu expression
\cite[Section~5.5]{NemethiNicolaescu2002} via the classical
Hirzebruch--Zagier identity, bypassing the Casson--Walker invariant
used in \cite{NemethiNicolaescu2002} and providing a self-contained
alternative.

\begin{remark}\label{rem:d-invariant}
Under the hypotheses of Theorem~\ref{thm:intro-main-seifert}, let
$\mathfrak{s}_{\rm can} \in \operatorname{Spin}^c(Y)$ be the
$\operatorname{Spin}^c$ structure induced by $\xi_{\rm can}$. Then the
Heegaard Floer correction term satisfies
\[
d\!\left(Y,\,\mathfrak{s}_{\rm can}\right)
= \frac{\theta(\xi_{\rm can}) + 2}{4},
\]
with $\theta(\xi_{\rm can})$ given explicitly by
Theorem~\ref{thm:intro-main-seifert}. Indeed, a star-shaped plumbing
tree has at most one bad vertex (the central one), hence is almost
rational; this is the special case of
Remark~\ref{rem:d-invariant-tree} where equality is guaranteed.
\end{remark}

For arbitrary negative-definite plumbing trees, the star-shaped decomposition
underlying Theorem~\ref{thm:intro-main-seifert} is no longer
available, and the Schur complement must be applied inductively
along $\Gamma$. Our second main result gives a recursive formula for
$\theta(\xi_{\rm can})$ that requires only a single leaf-to-root pass
on the vertex set $V(\Gamma)$.

\begin{theorem}\label{thm:intro-main-tree}
Let $\Gamma$ be a connected negative-definite tree with $N$
vertices, all genera zero, and weights $-a_v\le -1$. Set
$z_v:=a_v-2$, and fix any root $\rho\in V(\Gamma)$. Define
rational numbers $\alpha_v>0$, $s_v$, $\beta_v$ for each
$v\in V(\Gamma)$ by the leaf-to-root recursions
\[
\alpha_v=\frac{1}{a_v-\sum_{c\in C(v)}\alpha_c},
\qquad
s_v=z_v-\sum_{c\in C(v)}\beta_c,
\qquad
\beta_v=-\alpha_v s_v,
\]
where $C(v)$ denotes the set of children of $v$ (sums are empty when
$v$ is a leaf). Then
\[
\theta(\xi_{\rm can})
=
(N-2)-\sum_{v\in V(\Gamma)}\alpha_v s_v^2.
\]
The right-hand side is independent of the choice of root.
\end{theorem}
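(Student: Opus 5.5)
Via the topological identity $\theta(\xi_{\rm can})=K_{\rm can}^2+\#V-2$ (Remark~\ref{rem:e0-minus-one}, which rests on Theorem~\ref{thm:consistent-intro} and the handlebody formula), the claim reduces to the linear-algebra identity $K^2=-\sum_v\alpha_v s_v^2$. Here $X_\Gamma$ is the plumbing, $Q$ its intersection matrix ($Q_{vv}=-a_v$, $Q_{uv}=1$ on edges), and $K$ is determined by adjunction: $K\cdot E_v=a_v-2=z_v$. Write $z=(z_v)$; then $K^2=z^TQ^{-1}z$, and we must show $z^TQ^{-1}z=-\sum_v\alpha_v s_v^2$. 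For an $(-1)$-vertex ($a_v=1$) the same adjunction applies, so no case distinction is needed at this level.

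The plan is to compute $z^TQ^{-1}z$ by block Gaussian elimination of $-Q$ from the leaves toward the root, which is the inductive Schur complement of Lemma~\ref{lem:schur-k}. Set $M=-Q$; it is positive definite, and $z^TQ^{-1}z=-z^TM^{-1}z$. We prove by induction on the subtree $T_v$ rooted at $v$ the following statement. Eliminating all vertices of $T_v$ other than $v$ turns the diagonal entry at $v$ into $a_v-\sum_{c\in C(v)}\alpha_c=1/\alpha_v$, and turns the entry of the linear term at $v$ into $s_v$ (up to sign). Moreover, the eliminated vertices contribute $\sum_{w\in T_v\setminus\{v\}}\alpha_w s_w^2$ to the quadratic form. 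Concretely, minimizing $x^TMx-2z^Tx$ over the coordinates of a leaf $w$ with neighbor $u$ gives $x_w=\alpha_w(z_w+x_u)$. This contributes $-\alpha_w z_w^2$ to the minimum value, shifts $M_{uu}$ by $-\alpha_w$, and shifts the linear coefficient at $u$ by $\alpha_w z_w$, i.e. $z_u\mapsto z_u+\alpha_w z_w = z_u-\beta_w$. This matches $s_u=z_u-\sum\beta_c$ and $\beta=-\alpha s$. Because $T_v$ is a tree, distinct children are decoupled once their subtrees are eliminated, so the contributions add. Finally we eliminate the root, using $\min(x^TMx-2z^Tx)=-z^TM^{-1}z$. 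This gives $z^TM^{-1}z=\sum_v\alpha_v s_v^2$, hence $K^2=-\sum\alpha_v s_v^2$, and the formula follows.

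Positivity of $\alpha_v$ follows because the successive pivots $1/\alpha_v$ are ratios of leading principal minors of the positive-definite $M$, taken in leaf-to-root order. In particular no division by zero occurs. Root-independence is then automatic, since $z^TQ^{-1}z$ does not depend on the elimination order.

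The main obstacle is the first step: justifying $\theta(\xi_{\rm can})=K^2+N-2$ with $K$ the adjunction class in every case. This includes trees containing $(-1)$-vertices, where Legendrian realizability fails, and it must match the rotation numbers of the consistent diagram from Theorem~\ref{thm:consistent-intro}. I would cite Remark~\ref{rem:e0-minus-one}, or Gompf's formula $\theta=c^2-3\sigma-2\chi$ with $\sigma=-N$ and $\chi=N+1$, together with $c_1(\xi_{\rm can})$ being the canonical class, i.e. $\mathrm{rot}(K_v)$ matching $a_v-2$ up to sign. Once that step is in place, the sign bookkeeping in the elimination is routine.
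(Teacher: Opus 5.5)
Your proposal is correct and follows essentially the paper's proof. You reduce via $\theta(\xi_{\rm can})=K_{\rm can}^2+N-2$ to $z^TQ_\Gamma^{-1}z$; the paper takes this identity from \cite[Remark~4.8]{NemethiNicolaescu2002}, which is the Gompf-formula computation on the resolution neighborhood that you sketch, so no Legendrian realizability is needed at $(-1)$-vertices. You then evaluate $z^TQ_\Gamma^{-1}z$ by leaf-to-root elimination, and your completing-the-square version is just a reformulation of the Schur-complement induction in Lemma~\ref{lem:tree-recursion}. For root-independence you use that $z^TQ_\Gamma^{-1}z$ does not depend on the elimination order, where the paper appeals to the topological invariance of $\theta$; these come to the same thing.
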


The quantity $\alpha_v$ is a generalized continued fraction of the
subtree rooted at $v$; when $\Gamma$ is star-shaped and $v$ is the
first vertex of the $i$-th leg, $\alpha_v$ specializes to the Seifert
invariant $r_i=q_i/p_i$.  

\begin{remark}\label{rem:intro-non-minimal-extension}
Theorem~\ref{thm:intro-main-seifert} is recovered as a special case
of Theorem~\ref{thm:intro-main-tree}
(Corollary~\ref{cor:star-recovery}).
\end{remark}

\begin{remark}\label{rem:d-invariant-tree}
Let $\Gamma$ be as in Theorem~\ref{thm:intro-main-tree}, let
$X_\Gamma$ be the associated plumbing $4$-manifold with intersection
form $Q_\Gamma$, and let $K_{\rm can}\in\operatorname{Char}(X_\Gamma)$
denote the canonical characteristic vector, defined by
$K_{\rm can}\cdot v + v\cdot v = -2$ for every $v\in V(\Gamma)$.
Khan~\cite{Khan2026} has recently proved N\'emethi's
conjecture~\cite{Nemethi2008} that
\[
d(Y_\Gamma,\mathfrak{s}_{\rm can})
\;=\;
\max_{k\in\mathfrak{s}_{\rm can}}\frac{k^2+N}{4}
\]
for every negative-definite plumbed rational homology 3-sphere.
Since $K_{\rm can}\in\mathfrak{s}_{\rm can}$, this recovers in
particular the Ozsv\'ath--Szab\'o plumbing
inequality~\cite[Theorem~9.6]{OzsvathSzabo2003abs}
\[
d(Y_\Gamma,\mathfrak{s}_{\rm can})
\;\geq\;
\frac{K_{\rm can}^2+N}{4}.
\]
Combining with Theorem~\ref{thm:intro-main-tree} and the identity
$\theta(\xi_{\rm can})=K_{\rm can}^2+N-2$
\cite[Remark~4.8]{NemethiNicolaescu2002} yields
\[
d(Y_\Gamma,\mathfrak{s}_{\rm can})
\;\geq\;
\frac{\theta(\xi_{\rm can})+2}{4}
\;=\;
\frac{N}{4}-\frac{1}{4}\sum_{v\in V(\Gamma)}\alpha_v s_v^2,
\]
with equality if and only if $K_{\rm can}$ realizes the maximum in
Khan's formula. By~\cite{Nemethi2005}, this is the case precisely
when $\Gamma$ is almost rational, equivalently when $\Gamma$ has at
most one bad vertex; the inequality can be strict otherwise.
Remark~\ref{rem:d-invariant} is the special case in which $\Gamma$ is
star-shaped, where almost-rationality is automatic (the central
vertex is the unique candidate for badness) and equality is
guaranteed.
\end{remark}

The second consequence of the surgery description, treated in
Section~\ref{sec:rhb}, is a strict minimization property of the
canonical contact structure among all diagram-realizable structures.

\begin{proposition}\label{prop:min}
Let $\Gamma$ be a minimal connected negative-definite plumbing tree
with all vertices of genus zero. Then
\[
\theta(\xi_{\rm can})<\theta(\xi)
\]
for any inconsistent diagram-realizable contact structure $\xi$ on
$Y_\Gamma$.
\end{proposition}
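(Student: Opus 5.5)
The plan is to compare $\theta$ of all diagram-realizable contact structures through Gompf's formula and to reduce the inequality to a positivity property of the inverse intersection form. Recall the setup. A diagram-realizable contact structure $\xi$ comes from realizing each vertex $v$ as a Legendrian unknot with $\tb=1-a_v$. Its rotation number $r_v$ satisfies $|r_v|\le z_v=a_v-2$ and $r_v\equiv z_v \pmod 2$. The consistent choice is $r=\pm z$, that is, all stabilizations on the same side. By Theorem~\ref{thm:consistent-intro} this choice gives $\xi_{\rm can}$, and the two signs give conjugate, hence isomorphic, structures. I take "diagram-realizable" to presuppose $a_v\ge 2$ for every $v$, so that each vertex is Legendrian-realizable.

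\textbf{Step 1: Gompf's formula.} Let $X_\Gamma$ be the Stein filling, with $\chi=N+1$ and $\sigma=-N$. Gompf's formula gives
\[
\theta(\xi)=c^2-3\sigma-2\chi=c^2+N-2,
\qquad
c^2=r^{T}Q_\Gamma^{-1}r=-\,r^{T}A\,r,
\]
where $A:=(-Q_\Gamma)^{-1}$. Proving $\theta(\xi_{\rm can})<\theta(\xi)$ is therefore equivalent to proving
\[
r^{T}Ar<z^{T}Az
\]
for every admissible $r$ with $r\neq\pm z$ (up to the convention on which vertices count for consistency, discussed below).

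\textbf{Step 2: entrywise positivity of $A$.} The matrix $-Q_\Gamma$ is symmetric positive definite. Its diagonal entries $a_v$ are positive and its off-diagonal entries are $0$ or $-1$, so it is a nonsingular M-matrix. Since $\Gamma$ is connected, $-Q_\Gamma$ is irreducible. The standard fact that an irreducible nonsingular M-matrix has an entrywise strictly positive inverse then gives $A_{uv}>0$ for all $u,v$. To keep things self-contained, one can instead write $-Q_\Gamma=D(I-B)$ with $D=\mathrm{diag}(a_v)$ and $B\ge 0$ irreducible, show that the spectral radius of $B$ is less than $1$ using positive definiteness, and expand $(I-B)^{-1}=\sum_m B^m$. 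Connectivity makes every entry of this sum positive. I expect this step to need the most care, though it is classical.

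\textbf{Step 3: the inequality.} Using $A_{uv}>0$ and $|r_v|\le z_v$,
\[
r^{T}Ar=\sum_{u,v}A_{uv}r_ur_v\le\sum_{u,v}A_{uv}|r_u||r_v|\le\sum_{u,v}A_{uv}z_uz_v=z^{T}Az .
\]
Now treat the two ways $r$ can be inconsistent.
\begin{itemize}
\item Suppose $|r_v|<z_v$ for some $v$. Then the diagonal term $A_{vv}|r_v|^2<A_{vv}z_v^2$ makes the second inequality strict.
\item Suppose $|r|=z$ but the signs are mixed. Then there are vertices $u,v$ with $z_u,z_v>0$ and $r_ur_v=-z_uz_v$. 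Since $A_{uv}>0$, the first inequality is strict.
\end{itemize}
Vertices with $z_v=0$ have $r_v=0$ forced, so they play no role in consistency. Every inconsistent $\xi$ falls into one of these two cases, so $\theta(\xi)>\theta(\xi_{\rm can})$.
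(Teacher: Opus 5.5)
Your proof is correct and follows essentially the same route as the paper. You use Gompf's formula to reduce the claim to comparing $\mathbf{z}^TM\mathbf{z}$ with $\mathbf{c}^TM\mathbf{c}$ for $M=-Q_\Gamma^{-1}$, invoke entrywise positivity of $M$ (the paper cites that $-Q_\Gamma$ is an irreducible Stieltjes matrix), and chain $\mathbf{z}^TM\mathbf{z}\le|\mathbf{z}|^TM|\mathbf{z}|\le\mathbf{c}^TM\mathbf{c}$; your two-case strictness argument (a diagonal term when some $|r_v|<z_v$, an off-diagonal term when signs are mixed) is a slightly more explicit version of the paper's equality analysis.
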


The proof is purely algebraic, based on the positivity of the entries
of $-Q_\Gamma^{-1}$ that follows from $-Q_\Gamma$ being an irreducible
Stieltjes matrix. The corresponding inequality among all
\emph{tight} contact structures on $Y_\Gamma$ was previously
established in~\cite[Proposition~1.7]{EtnyreOzbagciTosun2025} for star-shaped
plumbing trees with three legs, by reduction to the classification of
tight contact structures on Seifert fibered spaces with three
exceptional fibers. We do not require such a classification, and our
argument is therefore valid for arbitrary plumbing trees in our
class, but only in the diagram-realizable category.

As an application, recall that if $(Y,\xi)$ is an oriented rational
homology $3$-sphere admitting a symplectic rational homology ball
filling, then $\theta(\xi)=-2$. Combined with the inequality above
and the closed-form expressions of Theorems~\ref{thm:intro-main-seifert}
and~\ref{thm:intro-main-tree}, this rules out such fillings for a
large class of Stein fillable contact rational homology $3$-spheres
(Corollary~\ref{cor:S}).

\begin{remark}
Independently of the present work, Cavallo and
Matkovi\v{c}~\cite{CavalloMatkovic2026b} have recently classified the
symplectically fillable  contact structures on
negative-definite Seifert fibered spaces using Heegaard Floer
methods, and~\cite{CavalloMatkovic2026a} treats the opposite
orientation; they compute the maximal twisting number, a different
invariant from the one studied here.
\end{remark}

\medskip\noindent\textbf{Organization.}
Section~\ref{sec:mainsurgery} establishes the surgery description and
proves Theorem~\ref{thm:consistent-intro}. Section~\ref{sec:local} develops
a tridiagonal matrix identity and Section~\ref{sec:schur} the Schur
complement framework needed for the explicit formulas.
Section~\ref{sec:theta} proves
Theorem~\ref{thm:intro-main-seifert}.
Section~\ref{sec:recover} recovers known special cases, and
Section~\ref{sec:nn} compares with the N\'emethi--Nicolaescu formula.
Section~\ref{sec:general} develops the general tree theory and proves
Theorem~\ref{thm:intro-main-tree}. Section~\ref{sec:rhb} contains the
$\theta$-minimization (Proposition~\ref{prop:min}) and the application to symplectic rational
homology ball fillings. Section~\ref{sec:examples} contains examples of Seifert fibered spaces with $\theta(\xi_{\rm can})=-2$.

\section{Surgery diagram for the canonical contact structure}\label{sec:mainsurgery}

In this section we prove the surgery description of $\xi_{\rm can}$.
Throughout, $Y_\Gamma$ denotes the oriented link of the normal complex
surface singularity whose dual resolution graph is the minimal
connected negative-definite plumbing tree $\Gamma$ with all vertices
of genus zero.

\begin{definition}
A contact structure $\xi$ on $Y_\Gamma$ is \emph{diagram-realizable}
if it is obtained by Legendrian surgery on a Legendrian realization of
the unknots in the surgery diagram given by the plumbing tree
$\Gamma$. A diagram-realizable contact structure is called
\emph{consistent} if all stabilizations of all Legendrian unknots in
the diagram have the same sign; otherwise it is called
\emph{inconsistent}.
\end{definition}

We fix an orientation on each Legendrian unknot in the diagram so that
the linking number of the unknots corresponding to any two adjacent
vertices of $\Gamma$ is $+1$. Then $Y_\Gamma$ admits two consistent
contact structures: one obtained by Legendrian surgery on a diagram in
which every Legendrian unknot realizes its maximal rotation number,
and one in which every Legendrian unknot realizes its minimal rotation
number. These two contact structures are isomorphic. Our main result
of this section identifies their common isomorphism class with the
canonical contact structure on $Y_\Gamma$.

\begin{theorem}\label{thm:consistent}
The consistent diagram-realizable contact structure on $Y_\Gamma$ is
the canonical contact structure $\xi_{\rm can}$, up to
isomorphism.
\end{theorem}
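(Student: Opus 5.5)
We plan to identify $\xi_{\rm can}$ through a Stein filling whose first Chern class is forced to equal the canonical class $K_{\rm can}$, and then invoke a uniqueness statement for contact structures with given Stein filling data. Start from the Milnor fillable side. By~\cite{CaubelNemethiPopescu-Pampu2006}, $\xi_{\rm can}$ is determined up to isomorphism by the analytic type only through its topology: any two Milnor fillable structures on $Y_\Gamma$ are isomorphic. Hence it suffices to exhibit \emph{one} normal surface singularity with resolution graph $\Gamma$ whose link's contact structure is the consistent diagram-realizable $\xi$. Our first step is to choose the analytic structure well. By Laufer/Grauert we may take a singularity whose minimal resolution $\widetilde X$ is a plumbing of disk bundles over rational curves $E_v$ with $E_v^2=-a_v$. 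The resolution is a strong symplectic (in fact Kähler) filling of $(Y_\Gamma,\xi_{\rm can})$, and after Bogomolov--de Oliveira type deformation we may assume $\widetilde X$ (a neighborhood of the exceptional divisor) deforms to a Stein domain, or more directly use that a neighborhood of a negative-definite configuration of symplectic spheres with $\omega$-concave... we rather use the standard fact that the canonical contact structure is the boundary of the resolution equipped with a Kähler form making the boundary convex, and that the homotopy class of the boundary plane field is determined by the Chern class $c_1(\widetilde X)$ together with the $4$-dimensional data $(\chi,\sigma)$.

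The second step is the Legendrian side. Legendrian surgery on the consistent diagram gives a Stein domain $W$ diffeomorphic to the plumbing $X_\Gamma$ with $c_1(W)$ evaluated on the handle class $v$ equal to $\rot(L_v)$. In the consistent (say all negative stabilizations) realization of an unknot with $\tb=-a_v+1$ we get $\rot(L_v)=\pm(a_v-2)$ with a uniform sign; choosing the sign so that $\langle c_1(W),v\rangle = a_v-2 = -v\cdot v-2$, adjunction shows $c_1(W)=-K_{\rm can}$, i.e.\ exactly $c_1$ of the complex structure on $\widetilde X$ (adjunction for the rational curves $E_v$ gives $\langle c_1(\widetilde X),E_v\rangle=E_v^2+2$ up to the orientation sign convention). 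Thus $W$ and $\widetilde X$ are diffeomorphic compact $4$-manifolds with the same $c_1$, and by Gompf's formula the boundary plane fields agree in $\theta$ and in the $\operatorname{Spin}^c$ (hence $2$-dimensional) invariant; on a rational homology sphere these determine the homotopy class of the plane field.

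The main obstacle is upgrading homotopy to isotopy/isomorphism, since homotopic tight structures need not be isomorphic. Here I would avoid classification by using a deformation argument: realize the Legendrian diagram directly inside the complex picture. Concretely, I would try to show that the Stein structure on $W$ is deformation equivalent to a Stein structure on $\widetilde X$ (e.g.\ via Gompf's result that a plumbing of spheres of self-intersection $\le -2$ with Stein structure is determined, up to deformation, by the rotation numbers when each unknot's Legendrian realization is determined by $\rot$, and that the complex structure near $E$ can be perturbed to a Stein one by Bogomolov--de Oliveira), so that their convex boundaries are contactomorphic. For the vertices with $a_v=1$ (not Legendrian-realizable) this fails, but minimality excludes $-1$ vertices except possibly as central nodes, which we would handle separately by blowing down or by noting the statement concerns diagram-realizable structures, which presuppose $a_v\ge2$. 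Finally, the isomorphism between the max-rotation and min-rotation diagrams (conjugation) shows either choice gives $\xi_{\rm can}$.
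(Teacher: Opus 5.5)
Your reduction to a single analytic model via Caubel--N\'emethi--Popescu-Pampu and your Chern class computation are sound. There is, however, a genuine gap exactly at the ``main obstacle'' you flag.

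Your second and third steps only show that the boundary of the Legendrian-surgery domain $W$ and the boundary of the resolution neighborhood carry homotopic plane fields (same $\operatorname{Spin}^c$ structure, same $\theta$). A homotopy of plane fields does not give a contactomorphism: homotopic contact structures, even Stein fillable ones, need not be isotopic. So this part says nothing about $\xi_{\rm can}$ beyond its homotopy class. The upgrade you sketch relies on a ``Gompf result'' that Stein structures on a plumbing are determined up to deformation by rotation numbers. No such classification is available in this generality, and it would be at least as strong as the theorem you are proving. Even a weaker statement, that Legendrian realizations of the plumbing link with equal rotation numbers give deformation-equivalent Stein domains, would not close the gap. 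The Stein structure produced from the resolution by Bogomolov--de Oliveira does not a priori come presented by any Legendrian diagram of $\Gamma$.

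The missing idea, which is how the paper argues, is to stop comparing two a priori different contact structures and instead read off a diagram for $\xi_{\rm can}$ itself.
\begin{enumerate}
\item Take the Bogomolov--de Oliveira Stein structure $J_\Gamma$ on the pseudoconvex neighborhood $P_\Gamma$ of the exceptional divisor; its boundary is still $\xi_{\rm can}$.
\item Present $(P_\Gamma,J_\Gamma)$ itself by a Weinstein handle decomposition with one $0$-handle and $2$-handles attached along Legendrian unknots $L_u$ with $\tb(L_u)=1-a_u$, linked according to $\Gamma$. The paper appeals to Eliashberg and Gompf here. Then $\xi_{\rm can}$ is diagram-realizable by construction.
\item Apply your own computation: Gompf's formula $\rot(L_u)=\langle c_1(J_\Gamma),[\Sigma_u]\rangle$ with $[\Sigma_u]=[C_u]$, invariance of $c_1$ along the deformation, and adjunction on the rational curve $C_u$. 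This gives $\rot(L_u)=2-a_u$ for every $u$, i.e.\ the all-minimal diagram. Note the sign: it is $2-a_u$, not $a_u-2$ as you wrote, since $c_1=-K_{\rm can}$.
\item The presenting diagram is therefore consistent, and simultaneous orientation reversal identifies it with the all-maximal one.
\end{enumerate}

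With this identification in hand, your homotopy comparison is superfluous. Without it, or some substitute that produces a Legendrian plumbing presentation of the resolution's Stein structure, the argument does not go through. This is the step that carries the whole proof, so it is what you would need to justify carefully, rather than an auxiliary classification of Stein structures.
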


\begin{proof}
Let $\pi\colon\tilde X\to X$ be the minimal good resolution of the
normal complex surface singularity $(X,0)$ whose resolution graph is
$\Gamma$, and let $E=\pi^{-1}(0)=\bigcup_{u\in V(\Gamma)} C_u$ be the
exceptional divisor, where each $C_u$ is a smooth rational curve of
self-intersection $-a_u\le -2$. Choose a strictly plurisubharmonic
function $\varphi\colon\tilde X\to\R$ defined on a neighborhood of
$E$ with $\varphi^{-1}(0)=E$, and let
\[
P_\Gamma:=\{\,p\in\tilde X:\varphi(p)\le\epsilon\,\}
\]
for $\epsilon>0$ sufficiently small. Then $P_\Gamma$ is a compact
complex manifold with strictly pseudoconvex boundary, diffeomorphic
to the plumbing $4$-manifold associated to $\Gamma$, and the field
of complex tangencies along $Y_\Gamma:=\partial P_\Gamma$ is, by
\cite{CaubelNemethiPopescu-Pampu2006}, the canonical contact
structure $\xi_{\rm can}$.

By Bogomolov--de Oliveira~\cite{BogomolovDeOliveira97}, the complex
structure on $P_\Gamma$ can be deformed, through strictly
pseudoconvex complex structures with the same boundary contact
structure, to a Stein structure $J_\Gamma$ on the underlying smooth
$4$-manifold. In particular, $(P_\Gamma,J_\Gamma)$ is a Stein filling
of $(Y_\Gamma,\xi_{\rm can})$.

By Eliashberg's theorem~\cite{Eliashberg90a} (see also
\cite[Theorem~1.3]{Gompf98}), the Stein manifold $(P_\Gamma,J_\Gamma)$
admits a Weinstein handle decomposition consisting of a single
$0$-handle and one $2$-handle $H_u$ for each vertex $u\in V(\Gamma)$,
where $H_u$ is attached along a Legendrian unknot
$L_u\subset(S^3,\xi_{\rm std})$ with smooth framing
$\tb(L_u)-1=-a_u$. Hence $\tb(L_u)=1-a_u$ for every $u\in V(\Gamma)$.
Moreover, the smooth attaching link can be arranged so that the
linking numbers between Legendrian unknots of adjacent vertices are
$+1$ and zero otherwise, recovering the standard plumbing
presentation.

Let
\[
\Sigma_u=D_u\cup_{L_u} D_{c,u}
\]
be the closed surface obtained by gluing a Seifert disk
$D_u\subset B^4$ for $L_u$ to the core disk $D_{c,u}$ of $H_u$. In
$H_2(P_\Gamma;\Z)$, the class $[\Sigma_u]$ agrees with $[C_u]$, since
both cap the core disk $D_{c,u}$ with a disk in the $0$-handle, and
any two such disks are homotopic rel boundary.

By Gompf's first Chern class formula~\cite[Proposition~2.3]{Gompf98},
\[
\rot(L_u)
=\langle c_1(P_\Gamma,J_\Gamma),[\Sigma_u]\rangle
=\langle c_1(P_\Gamma,J_\Gamma),[C_u]\rangle.
\]

Since the Bogomolov--de Oliveira deformation is a smooth family of
complex (in particular almost complex) structures on $P_\Gamma$, the
first Chern class
$c_1(P_\Gamma,J_\Gamma)=c_1(P_\Gamma,J_0)\in H^2(P_\Gamma;\Z)$ is
independent of the parameter, where $J_0$ is the original complex
structure on the resolution neighborhood. Applying adjunction to the
smooth rational curve $C_u\subset\tilde X$, we obtain
\[
\langle c_1(P_\Gamma,J_\Gamma),[C_u]\rangle
=\langle c_1(P_\Gamma,J_0),[C_u]\rangle
=[C_u]^2+2=-a_u+2,
\]
and hence $\rot(L_u)=-(\tb(L_u)+1)=-(a_u-2)$.

Since the orientation of each $C_u$ is induced by the holomorphic
structure $J_0$ uniformly, the sign is the same across all vertices.
Therefore the Legendrian surgery diagram associated to
$(P_\Gamma,J_\Gamma)$ is consistent: every $L_u$ realizes its minimal
rotation number simultaneously.

There are exactly two consistent Legendrian realizations of $\Gamma$:
the all-minimal one above and the all-maximal one
$\rot(L_u)=+(a_u-2)$. Reversing the orientation of every Legendrian
unknot $L_u$ simultaneously preserves $\tb(L_u)$, flips the sign of
$\rot(L_u)$, and preserves all pairwise linking numbers. Since
contact surgery depends only on the unoriented Legendrian attaching
locus together with the contact framing, the all-minimal and
all-maximal diagrams yield isomorphic contact structures on
$Y_\Gamma$. We conclude that the consistent diagram-realizable
contact structure on $Y_\Gamma$ is $\xi_{\rm can}$, up to
isomorphism.
\end{proof}

\section{A tridiagonal matrix identity}\label{sec:local}

Suppose that $0<q<p$ are coprime integers and let
\[
\frac pq=[a_1,\dots,a_n]
=
a_1-\cfrac{1}{a_2-\cfrac{1}{\cdots-\cfrac{1}{a_n}}}
\]
be the Hirzebruch--Jung continued fraction expansion of $p/q$, with
$a_i\ge 2$ for all $i$. We set
\[
I\!\left(\frac pq\right)=\sum_{j=1}^n(a_j-3).
\]

\begin{definition}
Let $a_1,\ldots,a_n$ be a sequence of integers with $a_i\ge 2$. The
tridiagonal matrix associated to $a_1,\ldots,a_n$ is
\[
A=
\begin{pmatrix}
-a_1 & 1 & 0 & \cdots & 0\\
1 & -a_2 & 1 & \ddots & \vdots\\
0 & 1 & -a_3 & \ddots & 0\\
\vdots & \ddots & \ddots & \ddots & 1\\
0 & \cdots & 0 & 1 & -a_n
\end{pmatrix}.
\]
For $n=1$, this means $A=[-a_1]$.
\end{definition}

\begin{lemma}\label{lem:leg}
Suppose that $0<q<p$ are coprime integers and let
$\frac pq=[a_1,\dots,a_n]$ be the Hirzebruch--Jung continued fraction
expansion of $p/q$, with $a_i\ge 2$ for all $i$. Let $A$ be the
tridiagonal matrix associated to $a_1,\ldots,a_n$, and set
\[
{\bf w}=(a_1-2,\dots,a_n-2)^T,
\qquad
{\bf u}=(1,0,\dots,0)^T\in\R^n.
\]
If $q^*$ denotes the multiplicative inverse of $q$ modulo $p$, then
\[
{\bf w}^TA^{-1}{\bf w}
=
2-I(p/q)-n-\frac{q+q^*+2}{p},
\]
and
\[
{\bf u}^TA^{-1}{\bf w}
=
\frac{q+1-p}{p}.
\]
\end{lemma}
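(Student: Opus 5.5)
The plan is to avoid inverting $A$ in general. Instead I would express ${\bf w}$ through $A$ applied to the all-ones vector, so that only three entries of $A^{-1}$ are needed. Let ${\bf 1}=(1,\dots,1)^T$ and ${\bf e}_n=(0,\dots,0,1)^T$. Each interior row of $A$ has two off-diagonal $1$'s, and the two end rows have one each. A direct check therefore gives
\[
A{\bf 1}=-{\bf w}-{\bf u}-{\bf e}_n .
\]
This also holds for $n=1$, where ${\bf u}={\bf e}_n$ and $A{\bf 1}=-a_1=-(a_1-2)-2$. Hence
\[
A^{-1}{\bf w}=-{\bf 1}-A^{-1}({\bf u}+{\bf e}_n).
\]

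Expanding ${\bf w}^TA^{-1}{\bf w}$ with ${\bf w}=-A{\bf 1}-({\bf u}+{\bf e}_n)$ and using the symmetry of $A$ gives
\[
{\bf w}^TA^{-1}{\bf w}={\bf 1}^TA{\bf 1}+2\,{\bf 1}^T({\bf u}+{\bf e}_n)+({\bf u}+{\bf e}_n)^TA^{-1}({\bf u}+{\bf e}_n).
\]
The first two terms are elementary. First, ${\bf 1}^TA{\bf 1}=-\sum a_j+2(n-1)=-I(p/q)-n-2$. Second, ${\bf 1}^T({\bf u}+{\bf e}_n)=2$. Together they give $2-I(p/q)-n$. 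It remains to show
\[
({\bf u}+{\bf e}_n)^TA^{-1}({\bf u}+{\bf e}_n)=-\frac{q+q^*+2}{p}.
\]
Similarly, ${\bf u}^TA^{-1}{\bf w}=-1-(A^{-1})_{11}-(A^{-1})_{1n}$. So both identities reduce to the three values
\[
(A^{-1})_{11}=-\frac{q}{p},\qquad (A^{-1})_{nn}=-\frac{q^*}{p},\qquad (A^{-1})_{1n}=-\frac1p .
\]
With these, the second identity becomes $-1+\frac{q+1}{p}=\frac{q+1-p}{p}$.

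To get the three entries, I would work with $M=-A$, which is positive definite, and use cofactors. The standard continuant recursion shows $\det M=p$. Deleting the first row and column leaves the matrix of $[a_2,\dots,a_n]$, whose determinant is the numerator of $p/q-a_1$ after inversion, namely $q$. Deleting the last row and column gives the numerator of the reversed fraction $[a_n,\dots,a_1]$ truncated, which I would identify as $q^*$. This uses the classical fact that $[a_n,\dots,a_1]=p/q^*$, proved via the $2\times 2$ matrix product $\prod\begin{pmatrix}a_j&-1\\1&0\end{pmatrix}$ of determinant $1$ and its transpose. The $(1,n)$ minor of $M$ is triangular with $-1$'s on its diagonal. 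After the cofactor sign $(-1)^{1+n}$ its contribution is $1$, so $(M^{-1})_{1n}=1/p$. Negating then gives the claimed entries of $A^{-1}$. The case $n=1$ is checked directly: $q=q^*=1$ and $p=a_1$.

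I expect the main obstacle to be the sign bookkeeping in the cofactor computation, together with the identification of the corner minor with $q^*$. The latter requires verifying $q\cdot(\text{that minor})\equiv 1 \pmod p$, which I would derive from the determinant-one identity of the continuant matrix product.
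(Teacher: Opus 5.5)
Your proof is correct and is essentially the paper's argument. Your identity $A\mathbf{1}=-\mathbf{w}-\mathbf{u}-\mathbf{e}_n$ is exactly the paper's step of summing the equations of $A\mathbf{c}=\mathbf{w}$, and both reduce the problem to the endpoint values $c_1,c_n$ of $\mathbf{c}=A^{-1}\mathbf{w}$. The only difference is how those are found: you read them off the three corner entries of $A^{-1}$ via cofactors, which in effect proves the formula $\det A_1=(-1)^{n+1}(p-q-1)$ that the paper quotes as standard, while the paper gets $c_1$ by Cramer's rule and $c_n$ by reversing the chain.
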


\begin{proof}
If $n=1$, the statement is immediate from the explicit formulas
\[
A=[-a_1],\qquad p=a_1,\qquad q=q^*=1,\qquad
{\bf w}=(a_1-2),\qquad {\bf u}=(1).
\]
So assume $n\ge 2$.

Let ${\bf c}=A^{-1}{\bf w}=(c_1,\dots,c_n)^T$. Since
$A{\bf c}={\bf w}$, summing the resulting linear equations gives
\[
-\sum_{i=1}^n(a_i-2)c_i-c_1-c_n
=
\sum_{i=1}^n(a_i-2).
\]
Therefore
\[
{\bf w}^TA^{-1}{\bf w}
=
-\sum_{i=1}^n(a_i-2)-c_1-c_n.
\]
Since $\sum_{i=1}^n(a_i-2)=I(p/q)+n$, we obtain
\[
{\bf w}^TA^{-1}{\bf w}
=
-I(p/q)-n-c_1-c_n.
\]

By Cramer's rule,
\[
c_1=\frac{\det A_1}{\det A},
\]
where $A_1$ is obtained from $A$ by replacing the first column by
${\bf w}$. Using the standard determinant formulas,
\[
\det A=(-1)^n p,
\qquad
\det A_1=(-1)^{n+1}(p-q-1),
\]
and hence
\[
c_1=\frac{q+1-p}{p}.
\]

Applying the same argument to the reversed chain
$[a_n,\dots,a_1]=p/q^*$ gives
\[
c_n=\frac{q^*+1-p}{p}.
\]
Substituting into the previous formula yields
\[
{\bf w}^TA^{-1}{\bf w}
=
2-I(p/q)-n-\frac{q+q^*+2}{p}.
\]

Finally,
\[
{\bf u}^TA^{-1}{\bf w}={\bf u}^T{\bf c}=c_1=\frac{q+1-p}{p}.
\]
\end{proof}

\section{The Schur complement formula}\label{sec:schur}

Let $k\geq 3$ be an integer and let $Y=Y(e_0;r_1,\ldots,r_k)$ be a
Seifert fibered space in standard form, where $e_0\in\Z$ and
$r_i\in\Q\cap(0,1)$. For $1\leq i\leq k$, let
\[
\frac{1}{r_i}=[a_1^i,\dots,a_{n_i}^i],
\qquad a_j^i\ge 2,
\]
be the Hirzebruch--Jung continued fraction expansion of $1/r_i$. Let
$\Gamma$ denote the standard star-shaped plumbing graph associated to
$Y$: the central vertex has weight $e_0$, and the $i$-th leg is the
linear chain with weights $-a_1^i,\dots,-a_{n_i}^i$. Let $Q_\Gamma$
denote the intersection matrix of $\Gamma$. For the $i$-th leg, let
$A_i$ denote the tridiagonal intersection matrix
\[
A_i=
\begin{pmatrix}
-a_1^i & 1 & 0 & \cdots & 0\\
1 & -a_2^i & 1 & \ddots & \vdots\\
0 & 1 & -a_3^i & \ddots & 0\\
\vdots & \ddots & \ddots & \ddots & 1\\
0 & \cdots & 0 & 1 & -a_{n_i}^i
\end{pmatrix}
\]
associated to $a_1^i,\dots,a_{n_i}^i$.

\begin{lemma}[Schur complement formula]\label{lem:schur-k}
Let $\Gamma$ be a star-shaped plumbing graph with $k$ legs, as
described above. Set
\[
A=A_1\oplus\cdots\oplus A_k,
\qquad
{\bf u}_i=(1,0,\dots,0)^T\in\R^{n_i},
\qquad
U=({\bf u}_1^T,\dots,{\bf u}_k^T).
\]
Then the following holds.
\begin{enumerate}[label=(\roman*)]
\item The intersection matrix of $\Gamma$ has the block form
\[
Q_\Gamma=
\begin{pmatrix}
e_0 & U\\
U^T & A
\end{pmatrix}.
\]

\item $Q_\Gamma$ is invertible if and only if
\[
\Delta:=e_0-UA^{-1}U^T\neq 0,
\]
and in that case
\[
Q_\Gamma^{-1}
=
\begin{pmatrix}
\Delta^{-1} & -\Delta^{-1}UA^{-1}\\
-A^{-1}U^T\Delta^{-1} & A^{-1}+A^{-1}U^T\Delta^{-1}UA^{-1}
\end{pmatrix}.
\]

\item For any vector
\[
\mathbf z=
\begin{pmatrix}
z_0\\
\mathbf z_1\\
\vdots\\
\mathbf z_k
\end{pmatrix}\in\R^{1+n_1+\cdots+n_k},
\]
where $\mathbf z_i\in\R^{n_i}$, one has
\[
\mathbf z^TQ_\Gamma^{-1}\mathbf z
=
\sum_{i=1}^k \mathbf z_i^TA_i^{-1}\mathbf z_i
+
\frac{1}{\Delta}
\!\left(z_0-\sum_{i=1}^k {\bf u}_i^TA_i^{-1}\mathbf z_i\right)^{\!2}.
\]

\item
$
\Delta=e(Y):=e_0+\sum_{i=1}^k r_i.
$
\end{enumerate}
\end{lemma}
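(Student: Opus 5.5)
The plan is to prove the four parts in order, using only block-matrix algebra together with the determinant facts already used in the proof of Lemma~\ref{lem:leg}.

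For (i), order the basis as the central vertex followed by the vertices of leg $1$, leg $2$, and so on. The central vertex has self-intersection $e_0$ and is adjacent exactly to the first vertex of each leg, with intersection number $+1$. This gives the row $U=({\bf u}_1^T,\dots,{\bf u}_k^T)$. Distinct legs are disjoint, so the lower-right block is $A=A_1\oplus\cdots\oplus A_k$.

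For (ii), first note that each $A_i$ is invertible, since $\det A_i=(-1)^{n_i}p_i\neq 0$ by the determinant formula quoted in Lemma~\ref{lem:leg}. The standard Schur factorization
\[
Q_\Gamma=\begin{pmatrix}1 & UA^{-1}\\ 0 & I\end{pmatrix}\begin{pmatrix}\Delta & 0\\ 0 & A\end{pmatrix}\begin{pmatrix}1 & 0\\ A^{-1}U^T & I\end{pmatrix}
\]
then gives $\det Q_\Gamma=\Delta\det A$. So $Q_\Gamma$ is invertible iff $\Delta\ne0$. Inverting the three factors yields the displayed block formula, and one can confirm it by multiplying out against $Q_\Gamma$.

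For (iii), write $\mathbf z=(z_0,\mathbf z')$ with $\mathbf z'=(\mathbf z_1,\dots,\mathbf z_k)$ and expand using (ii):
\[
\mathbf z^TQ_\Gamma^{-1}\mathbf z=\Delta^{-1}z_0^2-2\Delta^{-1}z_0\,UA^{-1}\mathbf z'+\mathbf z'^TA^{-1}\mathbf z'+\Delta^{-1}(UA^{-1}\mathbf z')^2 .
\]
The $\Delta^{-1}$ terms combine into $\Delta^{-1}(z_0-UA^{-1}\mathbf z')^2$, using that $A^{-1}$ is symmetric. The block-diagonal structure then splits $\mathbf z'^TA^{-1}\mathbf z'=\sum_i\mathbf z_i^TA_i^{-1}\mathbf z_i$ and $UA^{-1}\mathbf z'=\sum_i{\bf u}_i^TA_i^{-1}\mathbf z_i$.

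For (iv), again by block-diagonality, $UA^{-1}U^T=\sum_i{\bf u}_i^TA_i^{-1}{\bf u}_i$, so it suffices to show $(A_i^{-1})_{11}=-q_i/p_i=-r_i$. By the cofactor formula, $(A_i^{-1})_{11}=\det A_i'/\det A_i$, where $A_i'$ is the tridiagonal matrix of $a_2^i,\dots,a_{n_i}^i$. We have $\det A_i=(-1)^{n_i}p_i$, and the tail continued fraction satisfies $[a_2^i,\dots,a_{n_i}^i]=q_i/(a_1^iq_i-p_i)$, with numerator $q_i$. Hence $\det A_i'=(-1)^{n_i-1}q_i$, and the ratio is $-q_i/p_i$. (When $n_i=1$, read $\det A_i'=1$ and $q_i=1$.) Therefore $\Delta=e_0+\sum r_i=e(Y)$.

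The main obstacle is this last step: keeping the signs and the numerator/denominator identification of the truncated continued fraction straight. The cleanest route is to prove by induction on $n$ that the determinant of the tridiagonal matrix of $[a_1,\dots,a_n]$ equals $(-1)^n$ times the numerator of its Hirzebruch--Jung continued fraction. This comes from the recursion $\det_n=-a_1\det_{n-1}-\det_{n-2}$, obtained by expanding along the first row, together with the matching recursion for numerators.
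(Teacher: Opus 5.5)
Your proposal is correct and follows essentially the same route as the paper: the block form of $Q_\Gamma$, the Schur-complement inverse, expanding the quadratic form using symmetry of $A^{-1}$ and block-diagonality, and then $(A_i^{-1})_{11}=-r_i$. The only difference is that you justify two facts the paper simply asserts: invertibility of $A$ via $\det A_i=(-1)^{n_i}p_i$, and $(A_i^{-1})_{11}=-q_i/p_i$ via the cofactor and continuant recursion. You should also say explicitly, as the paper does, that the $+1$ off-diagonal entries come from orienting the unknots so that adjacent ones have linking number $+1$.
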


\begin{proof}
We orient the knots in the surgery diagram corresponding to the
plumbing graph $\Gamma$ so that the linking number of any two knots
corresponding to adjacent vertices is $+1$. Therefore,
\[
Q_\Gamma=
\begin{pmatrix}
e_0 & U\\
U^T & A
\end{pmatrix}.
\]
Since $A=A_1\oplus\cdots\oplus A_k$ is invertible, the Schur
complement of $A$ in $Q_\Gamma$ is $\Delta=e_0-UA^{-1}U^T$. It follows
that $Q_\Gamma$ is invertible if and only if $\Delta\neq 0$, and in
that case
\[
Q^{-1}_\Gamma=
\begin{pmatrix}
\Delta^{-1} & -\Delta^{-1}UA^{-1}\\
-A^{-1}U^T\Delta^{-1} & A^{-1}+A^{-1}U^T\Delta^{-1}UA^{-1}
\end{pmatrix}.
\]

Write $\mathbf z=\begin{pmatrix}z_0\\\mathbf z_{\rm leg}\end{pmatrix}$,
where $\mathbf z_{\rm leg}=(\mathbf z_1,\dots,\mathbf z_k)^T$. Then
\[
\mathbf z^TQ_\Gamma^{-1}\mathbf z
=
z_0^2\Delta^{-1}
-2z_0\Delta^{-1}UA^{-1}\mathbf z_{\rm leg}
+\mathbf z_{\rm leg}^TA^{-1}\mathbf z_{\rm leg}
+\mathbf z_{\rm leg}^TA^{-1}U^T\Delta^{-1}UA^{-1}\mathbf z_{\rm leg}.
\]

Since $A$ is block diagonal,
\[
\mathbf z_{\rm leg}^TA^{-1}\mathbf z_{\rm leg}
=
\sum_{i=1}^k \mathbf z_i^TA_i^{-1}\mathbf z_i,
\]
and
$UA^{-1}\mathbf z_{\rm leg}=\sum_{i=1}^k {\bf u}_i^TA_i^{-1}\mathbf z_i$.
Hence
\[
\mathbf z_{\rm leg}^TA^{-1}U^T\Delta^{-1}UA^{-1}\mathbf z_{\rm leg}
=
\Delta^{-1}\!\left(\sum_{i=1}^k {\bf u}_i^TA_i^{-1}\mathbf z_i\right)^{\!2}.
\]

Substituting and grouping terms involving $\Delta^{-1}$ gives
\[
\mathbf z^TQ_\Gamma^{-1}\mathbf z
=
\sum_{i=1}^k \mathbf z_i^TA_i^{-1}\mathbf z_i
+
\Delta^{-1}\!\left(z_0-\sum_{i=1}^k {\bf u}_i^TA_i^{-1}\mathbf z_i\right)^{\!2}.
\]

Finally, since $(A_i^{-1})_{11}=-r_i$, we have
\[
UA^{-1}U^T=\sum_{i=1}^k (A_i^{-1})_{11}=-\sum_{i=1}^k r_i,
\]
and therefore $\Delta=e_0+\sum_{i=1}^k r_i$.
\end{proof}

\section{The Seifert fibered case}\label{sec:theta}

Let $Y$ be an oriented rational homology $3$-sphere and let $\xi$ be
an oriented tangent $2$-plane field on $Y$. Following
Gompf~\cite{Gompf98}, the homotopy class of $\xi$ is determined by the
$spin^c$ structure induced by $\xi$ together with a rational invariant
$\theta(\xi)$, defined as follows. Given $(Y,\xi)$, there exists a
compact smooth $4$-manifold $X$ equipped with an almost complex
structure $J$ such that $\partial X=Y$ and the complex tangencies
$TY\cap JTY$ are homotopic to $\xi$ as oriented $2$-plane fields on
$Y$. Denote by $\chi(X)$ and $\sigma(X)$ the Euler characteristic and
signature of $X$, respectively, and let $c_1(X,J)$ denote the first
Chern class of the almost complex manifold $(X,J)$. Gompf showed that
the quantity
\[
c_1^2(X,J)-2\chi(X)-3\sigma(X)
\]
is independent of the choice of $(X,J)$ satisfying the conditions
above. Hence it defines an invariant of the homotopy class of $\xi$,
denoted by $\theta(\xi)$. Since $c_1^2(X,J)$ is defined using rational
coefficients in (co)homology, the invariant $\theta(\xi)$ takes
values in $\Q$. Moreover, $\theta(\xi)$ depends only on $Y$ and the
homotopy class of $\xi$, is independent of the orientation of $\xi$,
and changes sign when the orientation of $Y$ is
reversed~\cite[Theorem~4.5]{Gompf98}. It is well-known that if
$(Y,\xi)$ admits a symplectic rational homology ball filling, then
$\theta(\xi)=-2$.

For coprime integers $0<q<p$ with Hirzebruch--Jung continued fraction
$p/q=[a_1,\dots,a_n]$, $a_i\ge 2$, let $q^*$ denote the multiplicative
inverse of $q$ modulo $p$, and set $I(p/q):=\sum_j(a_j-3)$.

\begin{theorem}\label{thm:main-seifert}
Let $Y=Y(e_0;r_1,\dots,r_k)$ be a Seifert fibered space with
normalized invariants $r_i\in\Q\cap(0,1)$, central weight
$e_0\le -1$, and negative rational Euler number
\[
e(Y)=e_0+r_1+\cdots+r_k<0.
\]
For each $i=1,\dots,k$, write
\[
\frac{1}{r_i}=\frac{p_i}{q_i}=[a_1^i,\dots,a_{n_i}^i],
\qquad a_j^i\ge 2,
\]
where $p_i,q_i$ are coprime integers with $0<q_i<p_i$. Then the
canonical contact structure $\xi_{\rm can}$ on $Y$ satisfies
\[
\theta(\xi_{\rm can})
=
(2k-1)
-\sum_{i=1}^k\!\left(I(p_i/q_i)+\frac{q_i+q_i^*+2}{p_i}\right)
+\frac{1}{e(Y)}\!\left(-e_0+k-2-\sum_{i=1}^k\frac{q_i+1}{p_i}\right)^{\!2}.
\]
\end{theorem}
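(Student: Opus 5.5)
Our plan is to compute $\theta(\xi_{\rm can})$ from Gompf's handlebody formula on the plumbing $X_\Gamma$ (Euler characteristic $1+N$, signature $-N$, with $N=1+\sum_i n_i$), using the Legendrian surgery description of Theorem~\ref{thm:consistent}. Gompf's formula gives $\theta=c^2-2\chi-3\sigma=c^2+N-2$, where $c^2=\mathbf z^TQ_\Gamma^{-1}\mathbf z$ and $\mathbf z$ is the vector of rotation numbers. When $e_0\le -2$, the proof of Theorem~\ref{thm:consistent} shows that every vertex is realized with $\rot=\pm(a_u-2)$, all with the same sign. Hence $\mathbf z=\pm(-e_0-2,\mathbf w_1,\dots,\mathbf w_k)$, with $\mathbf w_i=(a^i_1-2,\dots,a^i_{n_i}-2)^T$, and the overall sign does not affect the quadratic form. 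When $e_0=-1$ the central unknot cannot be made Legendrian with $\tb=0$. We will instead use $\theta(\xi_{\rm can})=K_{\rm can}^2+N-2$ (Remark~\ref{rem:e0-minus-one}, from \cite{NemethiNicolaescu2002}). The defining condition $K\cdot v=-v\cdot v-2$ gives the evaluation vector $(-e_0-2,\mathbf w_i)$ in either case, so the same algebra applies to both cases uniformly.

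The main step is to evaluate $\mathbf z^TQ_\Gamma^{-1}\mathbf z$ by Lemma~\ref{lem:schur-k}(iii) with $z_0=-e_0-2$ and $\mathbf z_i=\mathbf w_i$. By Lemma~\ref{lem:schur-k}(iv) we have $\Delta=e(Y)<0$, so $Q_\Gamma$ is invertible. Lemma~\ref{lem:leg} supplies both pieces:
\[
\mathbf w_i^TA_i^{-1}\mathbf w_i=2-I(p_i/q_i)-n_i-\frac{q_i+q_i^*+2}{p_i},
\qquad
\mathbf u_i^TA_i^{-1}\mathbf w_i=\frac{q_i+1}{p_i}-1 .
\]
Therefore
\[
z_0-\sum_i\mathbf u_i^TA_i^{-1}\mathbf w_i=-e_0-2+k-\sum_i\frac{q_i+1}{p_i},
\]
which is exactly the term inside the square in the statement.

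Adding $N-2=\sum_i n_i-1$ cancels the $-n_i$ terms, and the constants combine as $2k-1$. This gives the stated formula.

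We expect the main obstacle to be sign and convention bookkeeping rather than new mathematics. First, the linking-number $+1$ orientation convention must agree with the off-diagonal $+1$ entries in $Q_\Gamma$. Second, Gompf's formula requires the rotation-number vector to be paired with $Q_\Gamma^{-1}$, and that pairing is insensitive to the global sign. Third, for $e_0=-1$ one must check that $K_{\rm can}$ evaluates on the central vertex as $-e_0-2=-1$, and that the Nemethi--Nicolaescu identity indeed applies even though the graph fails the Legendrian-realizability condition at the center. We would verify these on a lens space example as a consistency check.
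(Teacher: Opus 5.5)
Your proposal is correct and follows essentially the same route as the paper. The paper also uses Gompf's formula with the consistent Legendrian surgery diagram of Theorem~\ref{thm:consistent} when $e_0\le -2$, and the N\'emethi--Nicolaescu identity $\theta(\xi_{\rm can})=K_{\rm can}^2+N-2$ when $e_0=-1$ (Remark~\ref{rem:e0-minus-one}), noting as you do that $K_{\rm can}$ has the same evaluation vector $(-e_0-2,\mathbf w)$. It then evaluates $\mathbf z^TQ_\Gamma^{-1}\mathbf z$ with Lemma~\ref{lem:schur-k}(iii)--(iv) and Lemma~\ref{lem:leg}, and cancels the $n_i$ terms against $N-2$ exactly as you describe.
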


\begin{proof}
Let $\Gamma$ be the negative-definite star-shaped plumbing graph
corresponding to $Y=Y(e_0;r_1,\dots,r_k)$, with central vertex of
weight $e_0\le -2$, and with the $i$-th leg determined by
$1/r_i=p_i/q_i=[a_1^i,\dots,a_{n_i}^i]$. Let $X_\Gamma$ be the
corresponding plumbing $4$-manifold, and let $Q_\Gamma$ denote its
intersection matrix. For each $i$, let $A_i$ be the tridiagonal
matrix associated to $a_1^i,\dots,a_{n_i}^i$, and define
\[
{\bf w}_i=(a_1^i-2,\dots,a_{n_i}^i-2)^T,
\qquad
{\bf u}_i=(1,0,\dots,0)^T.
\]
Set
\[
A=A_1\oplus\cdots\oplus A_k,\qquad
{\bf w}=
\begin{pmatrix}
{\bf w}_1\\
\vdots\\
{\bf w}_k
\end{pmatrix},\qquad
U=({\bf u}_1^T,\dots,{\bf u}_k^T).
\]
Then
\[
Q_\Gamma=
\begin{pmatrix}
e_0 & U\\
U^T & A
\end{pmatrix}.
\]

By Theorem~\ref{thm:consistent}, the canonical contact structure
$\xi_{\rm can}$ on $Y$ is realized as the boundary of the Stein
structure on $X_\Gamma$ obtained by Legendrian surgery along the
consistent (all-maximal-rotation) diagram associated to $\Gamma$.
With orientations chosen so that all pairwise linking numbers between
adjacent unknots are $+1$, the rotation vector is
\[
\mathbf{z}=\begin{pmatrix}-e_0-2\\\mathbf{w}\end{pmatrix}.
\]
By Gompf's formula for Stein
handlebodies~\cite[Proposition~2.3]{Gompf98},
\[
c_1^2(X_\Gamma,J)=\mathbf{z}^TQ_\Gamma^{-1}\mathbf{z}.
\]

Since $\Gamma$ has $1+\sum_{i=1}^k n_i$ vertices,
\[
\chi(X_\Gamma)=2+\sum_{i=1}^k n_i,\qquad
\sigma(X_\Gamma)=-(1+\sum_{i=1}^k n_i),
\]
and hence
\[
-2\chi(X_\Gamma)-3\sigma(X_\Gamma)=-1+\sum_{i=1}^k n_i.
\]

By Lemma~\ref{lem:schur-k},
\[
c_1^2(X_\Gamma,J)
=
\sum_{i=1}^k {\bf w}_i^TA_i^{-1}{\bf w}_i
+
\frac{1}{\Delta}\!\left(z_0-\sum_{i=1}^k {\bf u}_i^TA_i^{-1}{\bf w}_i\right)^{\!2},
\]
where $\Delta=e(Y)$.

By Lemma~\ref{lem:leg}, for each $i$,
\[
{\bf w}_i^TA_i^{-1}{\bf w}_i
=
2-I(p_i/q_i)-n_i-\frac{q_i+q_i^*+2}{p_i},
\qquad
{\bf u}_i^TA_i^{-1}{\bf w}_i=\frac{q_i+1-p_i}{p_i}.
\]
Substituting,
\[
c_1^2(X_\Gamma,J)
=
\sum_{i=1}^k\!\left(2-I(p_i/q_i)-n_i-\frac{q_i+q_i^*+2}{p_i}\right)
+
\frac{1}{\Delta}\!\left(-e_0-2-\sum_{i=1}^k\frac{q_i+1-p_i}{p_i}\right)^{\!2}.
\]

Now
\[
\sum_{i=1}^k\frac{q_i+1-p_i}{p_i}
=
\sum_{i=1}^k\!\left(\frac{q_i+1}{p_i}-1\right)
=
\sum_{i=1}^k\frac{q_i+1}{p_i}-k,
\]
so
\[
-e_0-2-\sum_{i=1}^k\frac{q_i+1-p_i}{p_i}
=
-e_0+k-2-\sum_{i=1}^k\frac{q_i+1}{p_i}.
\]
Hence
\[
c_1^2(X_\Gamma,J)
=
\sum_{i=1}^k\!\left(2-I(p_i/q_i)-n_i-\frac{q_i+q_i^*+2}{p_i}\right)
+
\frac{1}{\Delta}\!\left(-e_0+k-2-\sum_{i=1}^k\frac{q_i+1}{p_i}\right)^{\!2}.
\]

Combining with $-2\chi(X_\Gamma)-3\sigma(X_\Gamma)=-1+\sum_{i=1}^k n_i$,
we obtain
\[
\theta(\xi_{\rm can})
=
\sum_{i=1}^k\!\left(2-I(p_i/q_i)-n_i-\frac{q_i+q_i^*+2}{p_i}\right)
-1+\sum_{i=1}^k n_i
+
\frac{1}{\Delta}\!\left(-e_0+k-2-\sum_{i=1}^k\frac{q_i+1}{p_i}\right)^{\!2}.
\]
The terms involving $n_i$ cancel, and replacing $\Delta$ with $e(Y)$
yields the stated formula.

The argument above assumed $e_0\le -2$, so that the central vertex
is Legendrian-realizable and Theorem~\ref{thm:consistent} identifies
$\xi_{\rm can}$ with the boundary contact structure of the Stein
structure on $X_\Gamma$. When $e_0=-1$, this direct surgery
interpretation fails, but the formula nonetheless holds by
Remark~\ref{rem:e0-minus-one}.
\end{proof}

\begin{remark}\label{rem:e0-minus-one}
The case $e_0=-1$ of Theorem~\ref{thm:main-seifert} requires separate
justification, since the central vertex of $\Gamma$ cannot then be
Legendrian-realized in $(S^3,\xi_{\rm std})$ (Legendrian unknots
there have $\tb\le -1$), so Theorem~\ref{thm:consistent} does not
directly apply. The formula nevertheless holds: the algebraic
identity $z^T Q_\Gamma^{-1}z = K_{\rm can}^2$ shows that the
right-hand side of Theorem~\ref{thm:main-seifert} equals
$K_{\rm can}^2+\#V-2$, and this equals $\theta(\xi_{\rm can})$ on $Y$
by~\cite[Remark~4.8]{NemethiNicolaescu2002}. Equivalently,
Proposition~\ref{prop:nemethi-agreement} expresses the same identity
via the Hirzebruch--Zagier formula and the
N\'emethi--Nicolaescu expression for $K_{\rm can}^2+\#V$, neither of
which requires $e_0\le -2$.
\end{remark}

\section{Recovering previous results}\label{sec:recover}

In this section we show that Theorem~\ref{thm:main-seifert}
specializes to three known formulas: the lens space formula of
\cite[Prop.~9.3]{EtnyreOzbagciTosun2025}, the dihedral formula of
\cite[Prop.~9.8]{EtnyreOzbagciTosun2025}, and the complementary-legs
formula of \cite[Prop.~4.4]{EtnyreOzbagciTosun2026}.

\subsection{Lens spaces}
Let $0<q<p$ be coprime integers with $p/q=[a_1,\dots,a_n]$, and let
$L(p,q)$ denote the oriented lens space, given by the linear plumbing
with weights $-a_1,\dots,-a_n$. The canonical contact structure
$\xi_{\rm can}$ on $L(p,q)$ arises from Legendrian surgery on this
chain with each unknot at maximal rotation number, giving rotation
vector $\mathbf{w}=(a_1-2,\dots,a_n-2)^T$.

\begin{proposition}
With the notation above,
\[
\theta(\xi_{\rm can})
=
-\,I\!\left(\frac pq\right)-\frac{q+q^*+2}{p},
\]
recovering \cite[Prop.~9.3]{EtnyreOzbagciTosun2025}.
\end{proposition}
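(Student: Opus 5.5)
The plan is to compute $\theta(\xi_{\rm can})$ directly from Gompf's formula on the linear plumbing $X$ with intersection matrix $A$, the tridiagonal matrix associated to $a_1,\dots,a_n$. We cannot simply specialize Theorem~\ref{thm:main-seifert}, since there is no central vertex. Instead, by Theorem~\ref{thm:consistent}, applied to the linear tree, $\xi_{\rm can}$ is the boundary of the Stein structure $J$ obtained by Legendrian surgery on the consistent chain. Every $a_j\ge 2$, so each unknot is Legendrian-realizable with $\tb=1-a_j$. Taking all rotation numbers maximal, with adjacent linking numbers $+1$, the rotation vector is ${\bf w}=(a_1-2,\dots,a_n-2)^T$. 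Gompf's formula \cite[Proposition~2.3]{Gompf98} then gives
\[
c_1^2(X,J)={\bf w}^TA^{-1}{\bf w}.
\]

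Next we evaluate this quadratic form using the first identity of Lemma~\ref{lem:leg}:
\[
{\bf w}^TA^{-1}{\bf w}=2-I(p/q)-n-\frac{q+q^*+2}{p}.
\]
For the topological terms, $X$ consists of one $0$-handle and $n$ $2$-handles, so $\chi(X)=1+n$. Since $A$ is negative definite, $\sigma(X)=-n$. Hence
\[
-2\chi(X)-3\sigma(X)=-2-2n+3n=n-2.
\]

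Adding the two contributions, the $n$ and constant terms cancel:
\[
\theta(\xi_{\rm can})=2-I(p/q)-n-\frac{q+q^*+2}{p}+n-2=-I(p/q)-\frac{q+q^*+2}{p}.
\]
This is the claimed formula.

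The only point needing care is conventions. The orientation of $L(p,q)$ as the boundary of the $(-a_1,\dots,-a_n)$ plumbing must match the one used in \cite[Prop.~9.3]{EtnyreOzbagciTosun2025}. We also need the sign of ${\bf w}$ to be irrelevant: $c_1^2$ is quadratic in ${\bf w}$, and the all-minimal and all-maximal diagrams give isomorphic structures, so this is harmless. The comparison with the cited proposition is then a direct match of expressions, provided the orientation and $q$ versus $q^*$ conventions agree. If they do not, we would use the symmetry of $q+q^*$ under reversing the chain to reconcile them.
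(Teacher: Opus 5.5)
Your proposal is correct and follows the paper's proof essentially verbatim: you realize $\xi_{\rm can}$ by the consistent Legendrian chain, evaluate $c_1^2=\mathbf{w}^TA^{-1}\mathbf{w}$ with Lemma~\ref{lem:leg}, and add $-2\chi-3\sigma=n-2$ from $\chi=1+n$ and $\sigma=-n$. Your closing remarks on the orientation and $q$ versus $q^*$ conventions go slightly beyond what the paper records, but they do not change the argument.
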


\begin{proof}
The Stein filling has $\chi=1+n$ and $\sigma=-n$, so
$\theta(\xi_{\rm can})=c_1^2+n-2$ with
$c_1^2=\mathbf{w}^TA^{-1}\mathbf{w}$. By Lemma~\ref{lem:leg},
$c_1^2=2-I(p/q)-n-(q+q^*+2)/p$, and the stated formula follows.
\end{proof}

\subsection{Dihedral $3$-manifolds}

We show that for $k=3$ and $r_1=r_3=1/2$, Theorem~\ref{thm:main-seifert}
reduces to the formula of \cite[Prop.~9.8]{EtnyreOzbagciTosun2025}
for the dihedral spherical manifold $D(p,q)$.

\begin{proposition}\label{prop:dihedral-eot}
Let $Y=Y(e_0;1/2,r,1/2)$ be a Seifert fibered space in standard form
with $e_0\le -2$. Write $1/r=[a_1,\dots,a_n]$ and set
$p/q:=[-e_0,a_1,\dots,a_n]$. Then
\[
\theta(\xi_{\rm can})
=
1-I\!\left(\frac{p}{q}\right)-\frac{q^*}{p-q},
\]
where $q^*$ is the inverse of $q$ modulo $p-q$.
\end{proposition}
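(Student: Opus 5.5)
The plan is to specialize Theorem~\ref{thm:main-seifert} to $k=3$ with $r_1=r_3=1/2$, and then rewrite everything in terms of the extended chain $p/q=[-e_0,a_1,\dots,a_n]$.

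\textbf{Step 1: substitute.} Write $1/r=p'/q'=[a_1,\dots,a_n]$ and set $m:=-e_0-1\ge 1$.

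For the two legs with $r_i=1/2$ we have $p_i=2$ and $q_i=q_i^*=1$. The chain is $[2]$, so $I(2)=-1$. Each such leg therefore contributes
\[
I+\frac{q_i+q_i^*+2}{p_i}=-1+2=1,
\qquad
\frac{q_i+1}{p_i}=1 .
\]
Theorem~\ref{thm:main-seifert} then becomes
\[
\theta(\xi_{\rm can})=3-I(p'/q')-\frac{q'+q'^*+2}{p'}+\frac{1}{e(Y)}\Bigl(m-\frac{q'+1}{p'}\Bigr)^{2},
\qquad e(Y)=-m+\frac{q'}{p'}.
\]

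\textbf{Step 2: translate to the extended chain.} From $p/q=-e_0-q'/p'$ we get $q=p'$ and $p=(m+1)p'-q'$. Set
\[
N:=p-q=mp'-q'=-p'\,e(Y)>0 .
\]
Prepending the entry $-e_0$ to the chain gives $I(p/q)=I(p'/q')+(m-2)$.

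The squared term simplifies to $-(N-1)^2/(p'N)$. So the target formula $1-I(p/q)-q^*/(p-q)$ is equivalent to
\[
\frac{q'+q'^*+2}{p'}+\frac{(N-1)^2}{p'N}=m+\frac{q^*}{N}.
\]
Multiply through by $p'N$ and use $mp'=N+q'$. Both sides contain $N^2+Nq'$, and after cancelling it the identity reduces to
\[
p'\,q^*=N\,q'^*+1 .
\]

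\textbf{Step 3: the arithmetic identity (the main point).} We must check that $q^*:=(Nq'^*+1)/p'$ is an integer in $(0,N)$ and is the inverse of $q=p'$ modulo $N$.

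Integrality: since $N\equiv -q'\pmod{p'}$, we get $Nq'^*\equiv -q'q'^*\equiv -1\pmod{p'}$, so $p'$ divides $Nq'^*+1$.

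Inverse: by construction $p'q^*=Nq'^*+1\equiv 1\pmod N$.

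Range: $q'^*<p'$ gives $Nq'^*+1\le N(p'-1)+1<Np'$, so $q^*<N$. Also $q^*>0$.

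The main care needed here is matching conventions. The inverse in the proposition is taken modulo $p-q$, not modulo $p$. One must also confirm that $q^*$ is the representative in $(0,N)$ rather than another residue, which the range estimate does. Finally, note that $e_0\le -2$ ensures $m\ge 1$ and hence $N>0$, so this is consistent with the hypotheses.
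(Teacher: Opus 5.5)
Your proposal is correct and follows essentially the paper's route: specialize Theorem~\ref{thm:main-seifert}, pass to $p/q=[-e_0,a_1,\dots,a_n]$, and reduce to an identity between modular inverses that is pinned down by a size bound. Your identity $p'q^*=Nq'^*+1$ is the paper's $m=(p-q)-q^*$ rewritten via $p^*=p'-q'^*$, so you simply skip the paper's auxiliary inverse $p^*$ of $p$ modulo $q$.

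One caveat, which the paper's claim $0<m$ shares, is the boundary case $N=1$, i.e.\ $e_0=-2$ and $1/r=[2,\dots,2]$ (the $D$-type graphs). There your strict bound $N(p'-1)+1<Np'$ fails and you get $q^*=1=N$. So the right normalization is $1\le q^*\le N$ rather than $0<q^*<N$, which is consistent with $q^*/(p-q)=1/[a_n,\dots,a_1,-e_0-1]$. For example, for $D_4$ you need $q^*=1$ to obtain $\theta=2$.
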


\begin{proof}
For each of the two legs with $r_i=1/2$, we have $p_i=2$,
$q_i=q_i^*=1$, so $I(2)+(q_i+q_i^*+2)/p_i=1$ and $(q_i+1)/p_i=1$.
Writing $1/r=p_0/q_0=[a_1,\dots,a_n]$, Theorem~\ref{thm:main-seifert}
with $k=3$ gives
\[
\theta(\xi_{\rm can})
=
3-I(p_0/q_0)-\frac{q_0+q_0^*+2}{p_0}
+\frac{(p_0(e_0+1)+q_0+1)^2}{p_0(p_0(e_0+1)+q_0)}.
\tag{1}
\]

Now $p/q=-e_0-q_0/p_0=(-e_0p_0-q_0)/p_0$, so $q=p_0$,
$p=-e_0p_0-q_0$, and $q_0=-e_0q-p$. Since $p\equiv -q_0\pmod q$, the
inverse $p^*$ of $p$ modulo $q$ satisfies $q_0^*=q-p^*$. A direct
substitution then yields
\[
p_0(e_0+1)+q_0=-(p-q),\qquad
\frac{q_0+q_0^*+2}{p_0}=-e_0+1-\frac{p+p^*-2}{q},
\]
\[
I(p/q)=(-e_0-3)+I(p_0/q_0).
\]
Inserting these into~(1) gives
\[
\theta(\xi_{\rm can})
=
-1-I\!\left(\frac{p}{q}\right)
+\frac{p+p^*-2}{q}
-\frac{(p-q-1)^2}{q(p-q)}.
\tag{2}
\]

It remains to combine the two rational terms in~(2). Writing
$(p+p^*-2)/q=1+(p-q+p^*-2)/q$ and combining over the common
denominator $q(p-q)$, a direct expansion shows
$(p-q)(p-q+p^*-2)-(p-q-1)^2=(p-q)p^*-1$, hence
\[
\theta(\xi_{\rm can})
=
-I\!\left(\frac{p}{q}\right)+\frac{m}{p-q},
\qquad
m:=\frac{(p-q)p^*-1}{q}.
\]
Since $pp^*\equiv 1\pmod q$ and $p\equiv p-q\pmod q$, we have
$(p-q)p^*\equiv 1\pmod q$, so $m\in\Z$. From $(p-q)p^*-qm=1$,
reducing $\!\!\!\mod p-q$ gives $qm\equiv -1\pmod{p-q}$, so
$m\equiv -q^*\pmod{p-q}$. The bounds $1\le p^*\le q-1$ force
$0<m<p-q$, hence $m=(p-q)-q^*$ and $m/(p-q)=1-q^*/(p-q)$, yielding
the stated formula.
\end{proof}

\begin{remark}\label{rem:dihedral-eot}
In the notation of \cite{EtnyreOzbagciTosun2025},
$p/q=[-e_0,a_1,\dots,a_n]$ is the continued fraction of the dihedral
resolution graph, and $q^*/(p-q)=1/[a_n,a_{n-1},\dots,a_1,-e_0-1]$, so
Proposition~\ref{prop:dihedral-eot} matches
\cite[Prop.~9.8]{EtnyreOzbagciTosun2025} on the nose.
\end{remark}

\subsection{Small Seifert fibered spaces with complementary legs}
Finally, we show that for $k=3$ and $r_1+r_3=1$,
Theorem~\ref{thm:main-seifert} reduces to
\cite[Prop.~4.4]{EtnyreOzbagciTosun2026}.

\begin{proposition}\label{prop:general-to-EOT-complementary}
Assume $Y=Y(e_0;r_1,r_2,r_3)$ is in standard form with $e_0\le -2$
and $r_1+r_3=1$. Write
\[
\frac{1}{r_1}=\frac{p_1}{q_1},\qquad
\frac{1}{r_2}=\frac{p_2}{q_2}=[a^2_1,\dots,a^2_{n_2}],\qquad
a^2_0:=-e_0,
\]
and set $p/q:=[a^2_0,a^2_1,\dots,a^2_{n_2}]$. Then
\[
\theta(\xi_{\rm can})
=
1-I(p/q)
-\frac{1}{[a^2_{n_2},\dots,a^2_1,a^2_0-1]}
+\frac{2(p_1-2)}{p_1(p-q)}
-\frac{(p_1-2)^2q}{p_1^2(p-q)},
\]
matching \cite[Prop.~4.4]{EtnyreOzbagciTosun2026} after replacing
$p_1,q_1$ by $\tilde{p},\tilde{q}$.
\end{proposition}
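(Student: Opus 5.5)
We mirror the proof of Proposition~\ref{prop:dihedral-eot}: specialize Theorem~\ref{thm:main-seifert} with $k=3$, absorb the central vertex into the second leg, and then simplify.

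\textbf{Step 1: legs 1 and 3.} Since $r_1+r_3=1$, we have $p_3=p_1$ and $q_3=p_1-q_1$. Write $q_1^*$ for the inverse of $q_1$ modulo $p_1$. Then $q_3^*=p_1-q_1^*$, because $(p_1-q_1)(p_1-q_1^*)\equiv q_1q_1^*\equiv1 \pmod{p_1}$. The Riemenschneider duality between the continued fractions of $p_1/q_1$ and $p_1/(p_1-q_1)$ gives
\[
I(p_1/q_1)+I(p_1/(p_1-q_1))=-2
\]
(with $\sum(a_j-2)$ over the two chains totalling $n_1+n_3-1$). Consequently the leg-1 and leg-3 contributions to the first sum in Theorem~\ref{thm:main-seifert} combine to
\[
-2+\frac{q_1+q_1^*+2+(p_1-q_1)+(p_1-q_1^*)+2}{p_1}=\frac{4}{p_1}.
\]
In the square bracket, the leg-1 and leg-3 terms add to
\[
\frac{q_1+1+p_1-q_1+1}{p_1}=1+\frac{2}{p_1}.
\]
Also $e(Y)=e_0+1+r_2$. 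After this step $q_1$ has disappeared entirely, consistent with the target formula depending only on $p_1$.

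\textbf{Step 2: the central vertex and leg 2.} As in the dihedral case, set $p/q=[-e_0,a^2_1,\dots,a^2_{n_2}]$. Then $q=p_2$ and $p=-e_0p_2-q_2$, so $q_2=-e_0q-p$ and
\[
e(Y)=-\frac{p-q}{q}.
\]
Moreover $I(p/q)=(-e_0-3)+I(p_2/q_2)$, and $q_2^*=q-p^*$, where $p^*$ is the inverse of $p$ modulo $q$. These are exactly the substitutions already used in the proof of Proposition~\ref{prop:dihedral-eot}, so that computation can be reused verbatim. The one difference is that the legs with $r=1/2$ are replaced by the combined contributions from Step~1, namely $4/p_1$ and $1+2/p_1$ in place of $2$ and $2$. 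The square bracket becomes
\[
-e_0+1-\frac{2}{p_1}-\frac{q_2+1}{q}=\frac{p-q-1}{q}+1-\frac{2}{p_1},
\]
and with $s:=(p_1-2)/p_1$ this equals
\[
\frac{p-q-1}{q}+s.
\]

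\textbf{Step 3: expand and identify.} Expand $\tfrac{1}{e(Y)}(\cdots)^2=-\tfrac{q}{p-q}\bigl(\tfrac{p-q-1}{q}+s\bigr)^2$. The $s$-free part is the same quantity as in the dihedral proof, so it reduces to $-I(p/q)+\mathrm{const}-q^*/(p-q)$ by the congruence argument given there. The term linear in $s$ is $-2s(p-q-1)/(p-q)$, and the quadratic term is $-qs^2/(p-q)$, which is exactly the target term $-(p_1-2)^2q/(p_1^2(p-q))$.

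Rewrite the linear term as
\[
-2s+\frac{2s}{p-q},
\]
whose second piece is the target term $2(p_1-2)/(p_1(p-q))$. The leftover constants are $-2s=-2+4/p_1$, the $-4/p_1$ from the leg contributions, and the $-1+2/p_1$ shift relative to the dihedral case. These must be checked to cancel against the dihedral constants so as to leave the constant $1$. Finally, Remark~\ref{rem:dihedral-eot} identifies $q^*/(p-q)$ with $1/[a^2_{n_2},\dots,a^2_1,a^2_0-1]$.

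\textbf{Main obstacle.} The delicate part is the constant bookkeeping in Step~3, together with Step~1's duality identity for $I$ and the relation $q_3^*=p_1-q_1^*$. I would verify the final formula on a small case, such as $r_1=1/3$, $r_3=2/3$, before trusting the signs.
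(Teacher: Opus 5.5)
Your Steps 1 and 2 are the paper's proof. You use the same identities $I(p_1/q_1)+I(p_1/(p_1-q_1))=-2$ and $(p_1-q_1)^*=p_1-q_1^*$, and the same substitutions $q=p_2$, $q_2=-e_0q-p$, $q_2^*=q-p^*$, $e(Y)=-(p-q)/q$.

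Step 3 takes a different and tidier route. The paper puts the trailing terms $-\frac{4}{p_1}+\frac{p+p^*-2}{q}-\frac{(p_1(p-1)-2q)^2}{p_1^2q(p-q)}$ over the common denominator $p_1^2q(p-q)$. It then introduces $m=(pp^*-1)/q$ and $r=p+p^*-q-m$, and asserts (``one checks'') that $r$ is the reduced inverse of $q$ modulo $p-q$. You instead write the bracket as $\frac{p-q-1}{q}+s$ and observe that the $s$-free part is literally the expression already simplified in Proposition~\ref{prop:dihedral-eot}. The number theory is therefore inherited rather than redone, and $\frac{2s}{p-q}$ and $-\frac{qs^2}{p-q}$ appear by inspection. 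In fact the paper's $r$ equals $(p-q)-m_D$, where $m_D=m-p^*$ is the integer from the dihedral proof. So the paper's ``one checks'' is exactly the bounds argument you are reusing. Your route explains structurally why the answer is the dihedral formula plus explicit corrections in $s$; the paper's route is independent of the dihedral computation.

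The constant bookkeeping you left open does close, but not with the items you list. Relative to the dihedral identity $3-I(p_2/q_2)-\frac{q_2+q_2^*+2}{p_2}-\frac{(p-q-1)^2}{q(p-q)}=1-I(p/q)-\frac{q^*}{p-q}$, your expression starts from $5-\frac{4}{p_1}$ instead of $3$. The leftover constants are therefore $2-\frac{4}{p_1}$ and $-2s=-2+\frac{4}{p_1}$, which cancel exactly. Your ``$-1+2/p_1$ shift'' is the change in $\sum_i(q_i+1)/p_i$, and that is already encoded in $s$ inside the bracket. With your three items the total would be $-3+\frac{2}{p_1}$, which depends on $p_1$ and cannot cancel against anything in the dihedral identity.

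There are two smaller slips. First, the bracket is $-e_0-\frac{2}{p_1}-\frac{q_2+1}{q}$, not $-e_0+1-\cdots$; your next expression $\frac{p-q-1}{q}+s$ is nonetheless the correct value. Second, under Riemenschneider duality the two sums $\sum(a_j-2)$ total $n_1+n_3-2$, not $n_1+n_3-1$, and only the former gives $I+I'=-2$.

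As a sanity check of the kind you proposed, $e_0=-3$, $r_1=1/3$, $r_2=1/2$ gives $\theta=40/27$ both from Theorem~\ref{thm:main-seifert} and from the target formula.
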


\begin{proof}
Since $r_1+r_3=1$, set $r_1=q_1/p_1$ and $r_3=(p_1-q_1)/p_1$. The
identities $I(p_1/q_1)+I(p_1/(p_1-q_1))=-2$ and
$(p_1-q_1)^*=p_1-q_1^*$ give
\[
\sum_{i=1,3}\!\left(I(p_i/q_i)+\frac{q_i+q_i^*+2}{p_i}\right)=\frac{4}{p_1},
\qquad
\sum_{i=1,3}\frac{q_i+1}{p_i}=1+\frac{2}{p_1},
\]
and $\Delta=e_0+1+q_2/p_2$. Hence Theorem~\ref{thm:main-seifert} gives
\[
\theta(\xi_{\rm can})
=
5-\frac{4}{p_1}
-I(p_2/q_2)-\frac{q_2+q_2^*+2}{p_2}
+\frac{1}{e_0+1+q_2/p_2}
\!\left(-e_0-\frac{2}{p_1}-\frac{q_2+1}{p_2}\right)^{\!2}.
\tag{1}
\]

From $p/q=[-e_0,a^2_1,\dots,a^2_{n_2}]=(-e_0p_2-q_2)/p_2$ we obtain
$q=p_2$, $p=-e_0q-q_2$, $q_2=-e_0q-p$, and
$I(p/q)=(-e_0-3)+I(p_2/q_2)$. With $p^*$ the inverse of $p$ modulo
$q$, we have $q_2^*=q-p^*$, and therefore
\[
\frac{q_2+q_2^*+2}{p_2}=-e_0+1-\frac{p+p^*-2}{q},\qquad
e_0+1+\frac{q_2}{p_2}=-\frac{p-q}{q},
\]
\[
-e_0-\frac{2}{p_1}-\frac{q_2+1}{p_2}=\frac{p-1}{q}-\frac{2}{p_1}.
\]
Substituting into~(1),
\[
\theta(\xi_{\rm can})
=
1-I(p/q)-\frac{4}{p_1}
+\frac{p+p^*-2}{q}
-\frac{(p_1(p-1)-2q)^2}{p_1^2\,q(p-q)}.
\tag{2}
\]

We now simplify the three trailing terms in~(2). Combining over
$p_1^2\,q(p-q)$ and expanding $(p_1(p-1)-2q)^2$ gives a numerator
\[
p_1^2\!\left[(p+p^*-2)(p-q)-(p-1)^2\right]+4p_1\,q(q-1)-4q^2.
\]
Set $m:=(pp^*-1)/q\in\Z$ and $r:=p+p^*-q-m$. Using $pp^*=qm+1$, the
bracketed expression simplifies to $q(m-p-p^*+2)=q(2-q-r)$, so the
numerator equals
\[
q\!\left[-p_1^2\,r+p_1^2(2-q)+4p_1(q-1)-4q\right]
=
q\!\left[-p_1^2\,r+(p_1-2)\bigl(p_1(2-q)+2q\bigr)\right].
\]
One checks that $r$ is the inverse of $q$ modulo $p-q$, whence
$r/(p-q)=1/[a^2_{n_2},\dots,a^2_1,a^2_0-1]$. Using
$(p_1-2)(p_1(2-q)+2q)=2p_1(p_1-2)-(p_1-2)^2 q$ and dividing by
$p_1^2\,q(p-q)$ yields
\[
-\frac{1}{[a^2_{n_2},\dots,a^2_1,a^2_0-1]}
+\frac{2(p_1-2)}{p_1(p-q)}
-\frac{(p_1-2)^2 q}{p_1^2(p-q)},
\]
and substituting into~(2) completes the proof.
\end{proof}

\section{The N\'{e}methi--Nicolaescu formula}\label{sec:nn}

In this section we show that the formula for $\theta(\xi_{\rm can})$
in Theorem~\ref{thm:main-seifert} agrees with the
N\'{e}methi--Nicolaescu expression
\[
K^2+\#\mathcal{V}-2,
\]
where $K$ is the canonical divisor on some resolution and
$\#\mathcal{V}$ is the number of irreducible components of the
exceptional divisor (equivalently, the number of vertices of
$\Gamma$). The identity
$\theta(\xi_{\rm can})=K^2+\#\mathcal{V}-2$ is due to
Gompf~\cite{Gompf98}; see also
\cite[Remark~4.8]{NemethiNicolaescu2002}. The closed-form expression
for $K^2+\#\mathcal{V}$ in the Seifert fibered case is given in
\cite[\S5]{NemethiNicolaescu2002}.

\begin{proposition}\label{prop:nemethi-agreement}
Under the hypotheses of Theorem~\ref{thm:main-seifert},
\[
\theta(\xi_{\rm can})
=
\frac{1}{e(Y)}\!\left(2-k+\sum_{i=1}^k\frac{1}{p_i}\right)^{\!2}
+e(Y)+3-12\sum_{i=1}^k s(q_i,p_i),
\]
where $s(q,p)$ denotes the classical Dedekind sum. After matching
conventions, this expression coincides with the
N\'{e}methi--Nicolaescu formula for $K^2+\#\mathcal{V}-2$ in
\cite[\S5]{NemethiNicolaescu2002}.
\end{proposition}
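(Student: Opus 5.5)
The plan is to start from the closed form of Theorem~\ref{thm:main-seifert} and rewrite it term by term. The only non-elementary input is the classical Hirzebruch--Zagier (Barkan--Hirzebruch--Zagier) identity relating a Hirzebruch--Jung continued fraction to a Dedekind sum. For coprime $0<q<p$ with $p/q=[a_1,\dots,a_n]$ and $qq^*\equiv 1 \pmod p$, it reads
\[
\sum_{j=1}^n (a_j-3) \;=\; 12\,s(q,p)-\frac{q+q^*}{p},
\qquad\text{that is,}\qquad
I(p/q)+\frac{q+q^*}{p}=12\,s(q,p).
\]
I would first sanity-check the normalization on $p/q=2$. There $I=-1$, $q=q^*=1$ and $s(1,2)=0$, so both sides equal $0$, which confirms the sign conventions.

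\emph{The linear terms.} Applying the identity to each leg turns the bracket in Theorem~\ref{thm:main-seifert} into
\[
I(p_i/q_i)+\frac{q_i+q_i^*+2}{p_i}=12\,s(q_i,p_i)+\frac{2}{p_i}.
\]
The non-quadratic part therefore becomes
\[
(2k-1)-12\sum_i s(q_i,p_i)-2\sum_i \frac{1}{p_i}.
\]

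\emph{The quadratic term.} Set $e=e(Y)=e_0+\sum_i q_i/p_i$ and $c=2-k+\sum_i 1/p_i$. Then
\[
-e_0+k-2-\sum_i\frac{q_i+1}{p_i}=-(e+c),
\]
so the quadratic term is
\[
\frac{(e+c)^2}{e}=e+2c+\frac{c^2}{e}.
\]
Adding this to the linear part gives
\[
(2k-1)+2(2-k)-12\sum_i s(q_i,p_i)+e+\frac{c^2}{e}.
\]
The $\pm 2\sum 1/p_i$ contributions cancel, and $(2k-1)+2(2-k)=3$. This is exactly the claimed expression
\[
\frac{c^2}{e(Y)}+e(Y)+3-12\sum_i s(q_i,p_i).
\]
For $e_0=-1$, no separate argument is needed: the first claim is a purely algebraic consequence of the displayed formula, which holds in that case by Remark~\ref{rem:e0-minus-one}.

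\emph{Matching with N\'emethi--Nicolaescu.} The second claim, agreement with \cite[\S5]{NemethiNicolaescu2002}, is where I expect the real work. Their formula for $K^2+\#\mathcal V$ is written in their own Seifert conventions: invariants $(\alpha_i,\omega_i)$ with $0<\omega_i<\alpha_i$, orbifold Euler number $e=e_0-\sum\omega_i/\alpha_i$ with $e_0$ sign-reversed relative to ours, and $\chi$-type quantity $2-k+\sum 1/\alpha_i$. To match, I would translate via $\alpha_i=p_i$, $\omega_i=q_i$, and their $e$ equal to ours up to an overall sign. Then I would use the oddness $s(-q,p)=-s(q,p)$ and the reciprocity/inversion symmetry $s(q^*,p)=s(q,p)$, to absorb any appearance of $\omega_i^*$ or $p_i-q_i$ in their expression. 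Once the dictionary is fixed, subtracting $2$ from their $K^2+\#\mathcal V$ should give the displayed formula verbatim. The main risk is the bookkeeping of the sign of $e$ combined with the square, since $c^2/e$ is sensitive to the sign of $e$, and I would verify the dictionary on the lens space case $k\le 2$ before finalizing.
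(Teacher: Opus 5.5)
Your proposal is correct and is essentially the paper's proof. It uses the same Hirzebruch--Zagier identity $I(p_i/q_i)+(q_i+q_i^*)/p_i=12\,s(q_i,p_i)$ and the same rewriting of the quadratic term as $(e+c)^2/e=e+2c+c^2/e$ with $c=2-k+\sum_i 1/p_i$, so that the constants collapse to $e+3$; it also uses the same dictionary $\alpha_i=p_i$, $\omega_i=q_i$ for the comparison with N\'emethi--Nicolaescu.

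The one thing to adjust is your expected sign flip in the Euler number. In the convention the paper uses (central decoration $b=e_0$ and $e=b+\sum_i\omega_i/\alpha_i$ with $b\le e<0$), the N\'emethi--Nicolaescu Euler number is $e(Y)$ itself, so $c^2/e$ matches with no sign bookkeeping. The only Dedekind-sum symmetry the paper invokes is oddness, to pass to the unnormalized $\beta_i\equiv-\omega_i$; the inversion symmetry $s(q^*,p)=s(q,p)$ is not needed.
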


\begin{proof}
Recall from Theorem~\ref{thm:main-seifert} that
\[
\theta(\xi_{\rm can})
=
(2k-1)
-\sum_{i=1}^k\!\left(I(p_i/q_i)+\frac{q_i+q_i^*+2}{p_i}\right)
+\frac{1}{e(Y)}\!\left(-e_0+k-2-\sum_{i=1}^k\frac{q_i+1}{p_i}\right)^{\!2}.
\]
Set
\[
e:=e(Y)=e_0+\sum_{i=1}^k\frac{q_i}{p_i},
\qquad
S:=\sum_{i=1}^k\frac{1}{p_i},
\qquad
A:=2-k+S.
\]
Then
\[
-e_0+k-2-\sum_{i=1}^k\frac{q_i+1}{p_i}
=
-\!\left(e_0+\sum_{i=1}^k\frac{q_i}{p_i}\right)+k-2-\sum_{i=1}^k\frac{1}{p_i}
=
-(e+A),
\]
hence
\[
\frac{1}{e}\!\left(-e_0+k-2-\sum_{i=1}^k\frac{q_i+1}{p_i}\right)^{\!2}
=
\frac{(e+A)^2}{e}
=
e+2A+\frac{A^2}{e}.
\]
Substituting and splitting
$(q_i+q_i^*+2)/p_i=(q_i+q_i^*)/p_i+2/p_i$,
\[
\theta(\xi_{\rm can})
=
(2k-1)
-\sum_{i=1}^k\!\left(I(p_i/q_i)+\frac{q_i+q_i^*}{p_i}\right)
-2S+e+2A+\frac{A^2}{e}.
\]
Using $A=2-k+S$,
\[
(2k-1)-2S+e+2A
=
(2k-1)-2S+e+2(2-k+S)
=
e+3,
\]
so
\[
\theta(\xi_{\rm can})
=
\frac{A^2}{e}+e+3
-\sum_{i=1}^k\!\left(I(p_i/q_i)+\frac{q_i+q_i^*}{p_i}\right).
\]
Finally, the classical Hirzebruch--Zagier identity
\[
12\,s(q_i,p_i)
=
I(p_i/q_i)+\frac{q_i+q_i^*}{p_i}
\]
gives the stated formula.

To reconcile with the formula
in~\cite[\S5]{NemethiNicolaescu2002}, we match conventions explicitly.
N\'{e}methi--Nicolaescu work with the unnormalized Seifert invariants
$(\alpha_i,\beta_i)$ and pass to the normalized invariants
$(\alpha_i,\omega_i)$ via
\[
0\le\omega_i<\alpha_i,\qquad
\omega_i\equiv -\beta_i\pmod{\alpha_i},
\]
where the rational Euler number takes the form
$e=b+\sum_i\omega_i/\alpha_i$ for some integer $b$ with $b\le e<0$
(see~\cite[\S2.15]{NemethiNicolaescu2002}). The plumbing graph there
is star-shaped with central decoration $b$ and arms determined by the
continued fraction expansions
$\alpha_i/\omega_i=[b_{i1},\dots,b_{i\nu_i}]$.

Comparing to our notation, in which $r_i=q_i/p_i$ with $0<q_i<p_i$,
$e_0\in\Z$, and the legs are described by
$1/r_i=p_i/q_i=[a^i_1,\dots,a^i_{n_i}]$, the dictionary is
\[
\alpha_i=p_i,\qquad
\omega_i=q_i,\qquad
b=e_0,\qquad
b_{ij}=a^i_j,\qquad
\nu_i=n_i.
\]
(For $\omega_i=q_i$: both are the unique integer in $[0,\alpha_i)$
with the prescribed congruence class, and $0<q_i<p_i$ matches
$0\le\omega_i<\alpha_i$.) The relation between the two
Seifert--invariant conventions is therefore
$q_i\equiv-\beta_i\pmod{p_i}$. Since the classical Dedekind sum is
odd in its first argument, $s(\beta_i,\alpha_i)=s(-q_i,p_i)=-s(q_i,p_i)$,
so
\[
-12\sum_{i=1}^k s(q_i,p_i)
=
+12\sum_{i=1}^k s(\beta_i,\alpha_i),
\]
and the formula above becomes
\[
\theta(\xi_{\rm can})
=
\frac{1}{e(Y)}\!\left(2-k+\sum_{i=1}^k\frac{1}{\alpha_i}\right)^{\!2}
+e(Y)+3+12\sum_{i=1}^k s(\beta_i,\alpha_i).
\]
The right-hand side is precisely $K^2+\#\mathcal{V}-2$: the
closed-form expression for $K^2+\#\mathcal{V}$ is the
N\'{e}methi--Nicolaescu formula~\cite[\S5]{NemethiNicolaescu2002}, and
the identity $\theta(\xi_{\rm can})=K^2+\#\mathcal{V}-2$ is recorded
in~\cite[Remark~4.8]{NemethiNicolaescu2002}.
\end{proof}

\section{General negative-definite plumbing trees}\label{sec:general}

Let $\Gamma$ be a connected negative-definite tree with vertex set
$V(\Gamma)$, $N=|V(\Gamma)|$. Each vertex $v$ carries an integer
weight $-a_v\le -1$, and we set $z_v=a_v-2$.

\subsection*{Rooted tree structure}

Fix a root $\rho\in V(\Gamma)$. Since $\Gamma$ is a tree, there is a
unique path between any two vertices. We use this to define a partial
order on $V(\Gamma)$ as follows.

\begin{itemize}
\item The \dfn{depth} of a vertex $v$ is the number of edges in the
      unique path from $\rho$ to $v$.
\item The \dfn{parent} of a vertex $v\ne\rho$, denoted $\pi(v)$, is
      the unique neighbor of $v$ on the path from $v$ to $\rho$. The
      root $\rho$ has no parent.
\item The \dfn{children} of $v$, denoted $C(v)$, are the neighbors of
      $v$ at depth one greater than $v$:
      \[
      C(v)=\{u\in V(\Gamma)\mid \pi(u)=v\}.
      \]
      A vertex with $C(v)=\emptyset$ is called a \dfn{leaf}.
\item A \dfn{branching vertex} is a vertex $v$ with $|C(v)|\ge 2$.
\item The \dfn{rooted subtree} $\Gamma_v$ is the full subgraph of
      $\Gamma$ induced by $v$ together with all of its descendants.
      Equivalently,
      \[
      V(\Gamma_v)=\{u\in V(\Gamma):v\text{ lies on the path from }\rho\text{ to }u\},
      \]
      where this includes $u=v$ itself. In particular
      $\Gamma_\rho=\Gamma$, and $\Gamma_v=\{v\}$ when $v$ is a leaf.
      The vertex $v$ is the root of $\Gamma_v$, and the child subtrees
      $\Gamma_{c_1},\ldots,\Gamma_{c_s}$ for $c_1,\ldots,c_s\in C(v)$
      partition $V(\Gamma_v)\setminus\{v\}$.
\item A \dfn{topological sort} of $V(\Gamma)$ is any ordering
      $v_1,\dots,v_N$ of the vertices such that every child appears
      before its parent. Such an ordering always exists and allows
      the recursions below to be computed in a single pass from
      leaves to root.
\end{itemize}

Write $Q_v=Q_{\Gamma_v}$ for the intersection matrix of $\Gamma_v$,
with rows and columns ordered so that $v$ appears first, followed by
the vertices of each child subtree $\Gamma_c$ in turn. Each $Q_v$ is
a principal submatrix of $Q_\Gamma$, hence negative definite. The
rotation vector has components $z_v=a_v-2$ at each vertex $v$, and
$\mathbf{z}_v$ denotes its restriction to $V(\Gamma_v)$.

\subsection*{The recursive quantities}

We define three quantities at each vertex $v$, computed in a single
leaf-to-root pass.

\begin{itemize}
\item $\alpha_v=-(Q_v^{-1})_{vv}\in\Q_{>0}$: the negation of the
      $(v,v)$ diagonal entry of $Q_v^{-1}$. Since $Q_v$ is negative
      definite, all diagonal entries of $Q_v^{-1}$ are negative, so
      $\alpha_v>0$. Explicitly, $\alpha_v$ is the generalized
      continued fraction of the subtree $\Gamma_v$: for a leaf,
      $\alpha_v=1/a_v$, and for a linear chain rooted at $v$ with
      continued fraction $p/q=[a_v,\dots]$, one has $\alpha_v=q/p$.

\item $s_v=z_v-\sum_{c\in C(v)}\beta_c$: the \dfn{corrected rotation
      number} at $v$. It is the rotation number $z_v=a_v-2$ at $v$,
      corrected by the propagated contributions $\beta_c$ from each
      child subtree $\Gamma_c$. The sum is empty at leaves, so
      $s_v=z_v$ there.

\item $\beta_v=(Q_v^{-1}\mathbf{z}_v)_v=-\alpha_v s_v\in\Q$: the
      component at $v$ of the vector $Q_v^{-1}\mathbf{z}_v$. It
      measures how the rotation data of the subtree $\Gamma_v$
      propagates to its root $v$, and is the only information about
      $\Gamma_v$ that its parent needs. At a leaf,
      $\beta_v=-z_v/a_v$.
\end{itemize}

These three quantities satisfy the following recursions, computed
from leaves upward in any topological sort.

\begin{lemma}\label{lem:tree-recursion}
The quantities $\alpha_v$, $s_v$, $\beta_v$ satisfy the recursions
\begin{equation}\label{eq:alpha-recursion}
\alpha_v
=\frac{1}{a_v-\displaystyle\sum_{c\in C(v)}\alpha_c},
\end{equation}
\begin{equation}\label{eq:s-recursion}
s_v=z_v-\sum_{c\in C(v)}\beta_c,
\end{equation}
\begin{equation}\label{eq:beta-recursion}
\beta_v=-\alpha_v s_v,
\end{equation}
with base cases $\alpha_v=1/a_v$, $s_v=z_v$, $\beta_v=-z_v/a_v$ at
leaves. Moreover, for each vertex $v$:
\begin{equation}\label{eq:quadratic-recursion}
\mathbf{z}_v^TQ_v^{-1}\mathbf{z}_v
=
-\sum_{u\in V(\Gamma_v)}\alpha_u s_u^2.
\end{equation}
\end{lemma}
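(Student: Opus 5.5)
We argue by strong induction on the height of $\Gamma_v$, treating $Q_v$ exactly as in Lemma~\ref{lem:schur-k}: the vertex $v$ plays the role of the central vertex and the child subtrees play the role of the legs. With $v$ ordered first,
\[
Q_v=\begin{pmatrix}-a_v & U\\ U^T & B\end{pmatrix},\qquad B=\bigoplus_{c\in C(v)}Q_c,
\]
where $U$ has a $1$ in the column of each child $c$ and zeros elsewhere. This holds because the only edges leaving $v$ inside $\Gamma_v$ go to its children, with linking $+1$. The block $B$ is negative definite, being a principal submatrix, so it is invertible. The Schur complement is then
\[
\Delta_v=-a_v-UB^{-1}U^T=-a_v-\sum_c (Q_c^{-1})_{cc}=-a_v+\sum_c\alpha_c.
\]
The argument of Lemma~\ref{lem:schur-k}(ii) works verbatim for an arbitrary block-diagonal $B$, not only for tridiagonal blocks. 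It gives $(Q_v^{-1})_{vv}=\Delta_v^{-1}$, and hence $\alpha_v=-\Delta_v^{-1}=1/(a_v-\sum_c\alpha_c)$, which is \eqref{eq:alpha-recursion}. Since $Q_v$ is negative definite, $\Delta_v<0$, so $\alpha_v>0$ and the denominator never vanishes. At a leaf, $B$ is empty and $\alpha_v=1/a_v$.

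Next we obtain $\beta_v$ from the first row of $Q_v^{-1}$. By the block inverse,
\[
(Q_v^{-1}\mathbf z_v)_v=\Delta_v^{-1}\Bigl(z_v-UB^{-1}\mathbf z_{\rm rest}\Bigr),
\]
where $\mathbf z_{\rm rest}$ is the restriction of $\mathbf z_v$ to the child subtrees. Because $B$ is block diagonal, $UB^{-1}\mathbf z_{\rm rest}=\sum_c (Q_c^{-1}\mathbf z_c)_c=\sum_c\beta_c$. This is exactly the definition of $\beta_c$ applied at the child, so we may use it by induction. Therefore $\beta_v=\Delta_v^{-1}s_v=-\alpha_v s_v$, with $s_v$ given by \eqref{eq:s-recursion}; this proves \eqref{eq:beta-recursion}. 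Strictly, the recursions for $s_v$ and $\beta_v$ are definitions, and what has to be checked is the consistency of $s_v$ with the intrinsic characterization $\beta_v=(Q_v^{-1}\mathbf z_v)_v$; the computation above does exactly that.

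Finally, we apply Lemma~\ref{lem:schur-k}(iii) in this generalized form. Block diagonality again splits the sum over children:
\[
\mathbf z_v^TQ_v^{-1}\mathbf z_v=\sum_{c\in C(v)}\mathbf z_c^TQ_c^{-1}\mathbf z_c+\Delta_v^{-1}\Bigl(z_v-\sum_c\beta_c\Bigr)^2
=\sum_{c}\mathbf z_c^TQ_c^{-1}\mathbf z_c-\alpha_v s_v^2 .
\]
The induction hypothesis expresses each child term as $-\sum_{u\in V(\Gamma_c)}\alpha_u s_u^2$. The child subtrees partition $V(\Gamma_v)\setminus\{v\}$, so summing gives \eqref{eq:quadratic-recursion}. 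The base case is a leaf, where $\mathbf z_v^TQ_v^{-1}\mathbf z_v=-z_v^2/a_v=-\alpha_v s_v^2$.

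The only real obstacle is bookkeeping: confirming that Lemma~\ref{lem:schur-k}(ii)–(iii) holds with $A$ replaced by an arbitrary invertible block-diagonal matrix $B$ and with $U$ picking out the root coordinate of each block. Its proof never used the tridiagonal structure, so this is immediate. One also has to keep the sign convention straight, namely that $\beta_c$ is the root component of $Q_c^{-1}\mathbf z_c$ itself rather than its negative.
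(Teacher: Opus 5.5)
Your proposal is correct and follows essentially the same route as the paper. Both arguments induct over rooted subtrees, write $Q_v$ in block form with $v$ first and $\bigoplus_{c\in C(v)}Q_c$ as the other diagonal block, and read off the $\alpha$- and $\beta$-recursions from the Schur complement $-a_v+\sum_c\alpha_c$. The quadratic identity then comes from Lemma~\ref{lem:schur-k}(iii). You state explicitly that Lemma~\ref{lem:schur-k}(ii)--(iii) never uses the tridiagonal structure, which the paper uses only tacitly. Inducting on height rather than on $|V(\Gamma_v)|$ is immaterial.
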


\begin{proof}
We proceed by induction on $|V(\Gamma_v)|$. The base case $v$ a leaf
is immediate: $Q_v=[-a_v]$, $Q_v^{-1}=[-1/a_v]$, so $\alpha_v=1/a_v$,
$s_v=z_v$, $\beta_v=-z_v/a_v$, and
$\mathbf{z}_v^TQ_v^{-1}\mathbf{z}_v=-z_v^2/a_v=-\alpha_v s_v^2$.

For the inductive step, let $c_1,\dots,c_s$ be the children of $v$.
Order the rows and columns of $Q_v$ so that $v$ comes first, followed
by the vertices of each child subtree $\Gamma_{c_i}$ in turn. Since
$\Gamma$ is a tree, vertices in different child subtrees are
non-adjacent, so $Q_v$ has the block form
\[
Q_v=\begin{pmatrix}-a_v & U \\ U^T & A\end{pmatrix},
\qquad
A=Q_{c_1}\oplus\cdots\oplus Q_{c_s},
\quad
U=\bigl(\mathbf{u}_{c_1}^T,\dots,\mathbf{u}_{c_s}^T\bigr),
\]
where $\mathbf{u}_c=(1,0,\dots,0)^T\in\R^{|V(\Gamma_c)|}$ selects
the root vertex of $Q_c$. Since $A$ is invertible (negative definite),
the Schur complement of $A$ in $Q_v$ is
\[
\delta_v
=-a_v-UA^{-1}U^T
=-a_v-\sum_{c\in C(v)}(Q_c^{-1})_{cc}
=-a_v+\sum_{c\in C(v)}\alpha_c.
\]
Negative definiteness of $Q_v$ forces $\delta_v<0$, equivalently
$a_v-\sum_c\alpha_c>0$, and
\[
\alpha_v=-(Q_v^{-1})_{vv}=-\frac{1}{\delta_v}
=\frac{1}{a_v-\sum_{c\in C(v)}\alpha_c},
\]
which proves \eqref{eq:alpha-recursion}. The block inverse of $Q_v$
is
\[
Q_v^{-1}=\begin{pmatrix}
\delta_v^{-1} & -\delta_v^{-1}UA^{-1}\\[1mm]
-A^{-1}U^T\delta_v^{-1} &
A^{-1}+A^{-1}U^T\delta_v^{-1}UA^{-1}
\end{pmatrix}.
\]
The component at $v$ of $Q_v^{-1}\mathbf{z}_v$ is
\[
\beta_v
=\delta_v^{-1}z_v-\delta_v^{-1}UA^{-1}\mathbf{z}_{\rm leg}
=\frac{1}{\delta_v}\!\left(z_v-\sum_{c\in C(v)}\beta_c\right)
=-\alpha_v s_v,
\]
since $UA^{-1}\mathbf{z}_{\rm leg}
=\sum_c\mathbf{u}_c^TQ_c^{-1}\mathbf{z}_c=\sum_c\beta_c$ and
$\delta_v^{-1}=-\alpha_v$. This proves \eqref{eq:s-recursion} and
\eqref{eq:beta-recursion}.

For \eqref{eq:quadratic-recursion}, Lemma~\ref{lem:schur-k}(iii)
applied to $Q_v$ gives
\[
\mathbf{z}_v^TQ_v^{-1}\mathbf{z}_v
=\sum_{c\in C(v)}\mathbf{z}_c^TQ_c^{-1}\mathbf{z}_c
+\frac{1}{\delta_v}\!\left(z_v-\sum_{c\in C(v)}\beta_c\right)^{\!2}
=\sum_{c\in C(v)}\mathbf{z}_c^TQ_c^{-1}\mathbf{z}_c
-\alpha_v s_v^2.
\]
Applying the inductive hypothesis
$\mathbf{z}_c^TQ_c^{-1}\mathbf{z}_c=-\sum_{u\in V(\Gamma_c)}\alpha_u s_u^2$
to each child and summing yields
\[
\mathbf{z}_v^TQ_v^{-1}\mathbf{z}_v
=-\sum_{c\in C(v)}\sum_{u\in V(\Gamma_c)}\alpha_u s_u^2
-\alpha_v s_v^2
=-\sum_{u\in V(\Gamma_v)}\alpha_u s_u^2,
\]
since $V(\Gamma_v)=\{v\}\cup\bigsqcup_{c\in C(v)}V(\Gamma_c)$.
\end{proof}

\begin{theorem}\label{thm:tree-general}
Let $\Gamma$ be a connected negative-definite tree with $N$ vertices,
and all genera zero. Let $Y=\partial X_\Gamma$ be the oriented
boundary of the plumbing $4$-manifold, and let $\xi_{\rm can}$ be the
canonical contact structure on $Y$. Fix any root $\rho\in V(\Gamma)$
and define $\alpha_v$, $s_v$, $\beta_v$ by the recursions
\eqref{eq:alpha-recursion}--\eqref{eq:beta-recursion} of
Lemma~\ref{lem:tree-recursion}. Then
\begin{equation}\label{eq:theta-tree}
\theta(\xi_{\rm can})
=
(N-2)
-\sum_{v\in V(\Gamma)}\alpha_v s_v^2.
\end{equation}
In particular, the right-hand side is independent of the choice of
root $\rho$.
\end{theorem}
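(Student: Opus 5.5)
The plan is to reduce $\theta(\xi_{\rm can})$ to the quadratic form $\mathbf z^TQ_\Gamma^{-1}\mathbf z$ and then evaluate it with Lemma~\ref{lem:tree-recursion}. First I will establish
\[
\theta(\xi_{\rm can})=\mathbf z^TQ_\Gamma^{-1}\mathbf z+N-2,
\]
where $\mathbf z=(a_v-2)_{v\in V(\Gamma)}$. When every weight satisfies $a_v\ge 2$, Theorem~\ref{thm:consistent} realizes $\xi_{\rm can}$ as the boundary of the Stein structure on $X_\Gamma$ given by the consistent Legendrian diagram, with rotation vector $\mathbf z$ (all-maximal convention). Gompf's formula \cite[Proposition~2.3]{Gompf98} then gives $c_1^2=\mathbf z^TQ_\Gamma^{-1}\mathbf z$. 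Since $\chi(X_\Gamma)=1+N$ and $\sigma(X_\Gamma)=-N$, we have $-2\chi-3\sigma=N-2$, exactly as in the proof of Theorem~\ref{thm:main-seifert}.

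If some vertex has weight $-1$, the tree is not minimal, and the Legendrian realization fails. In that case I will follow Remark~\ref{rem:e0-minus-one}. The class $K_{\rm can}$ is determined by $K_{\rm can}\cdot v=-v\cdot v-2=a_v-2=z_v$, so $K_{\rm can}=Q_\Gamma^{-1}\mathbf z$ in the dual basis. Hence $K_{\rm can}^2=\mathbf z^TQ_\Gamma^{-1}\mathbf z$, and the identity $\theta(\xi_{\rm can})=K_{\rm can}^2+N-2$ of \cite[Remark~4.8]{NemethiNicolaescu2002} gives the same expression. This topological route actually covers every case uniformly, so I may present it as the main argument and keep the surgery description as the motivating special case.

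Next I apply Lemma~\ref{lem:tree-recursion} with $v=\rho$. Since $\Gamma_\rho=\Gamma$, $Q_\rho=Q_\Gamma$ and $\mathbf z_\rho=\mathbf z$, identity \eqref{eq:quadratic-recursion} gives $\mathbf z^TQ_\Gamma^{-1}\mathbf z=-\sum_{v}\alpha_v s_v^2$. Substituting into the previous step yields \eqref{eq:theta-tree}. One point needs checking: the lemma's induction requires each $Q_v$ to be negative definite, even when some $a_v=1$. This holds because every $Q_v$ is a principal submatrix of $Q_\Gamma$, which is all the lemma uses.

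Root independence is then automatic. The left-hand side $\theta(\xi_{\rm can})$, or equivalently $\mathbf z^TQ_\Gamma^{-1}\mathbf z$, does not refer to $\rho$ at all, so every choice of root gives the same value. The main obstacle is the $\theta=K^2+N-2$ identification in the non-minimal case, together with confirming that the sign conventions for $\mathbf z$ (rotation numbers versus $K_{\rm can}$) agree. Sign does not matter in the end, because the form is quadratic in $\mathbf z$.
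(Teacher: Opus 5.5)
Your proposal is correct and follows the paper's proof: you identify $\mathbf z$ with $K_{\rm can}$ so that $\mathbf z^TQ_\Gamma^{-1}\mathbf z=K_{\rm can}^2$, invoke \cite[Remark~4.8]{NemethiNicolaescu2002} for $\theta(\xi_{\rm can})=K_{\rm can}^2+N-2$, apply \eqref{eq:quadratic-recursion} at the root, and get root-independence from the fact that $\theta$ does not involve $\rho$. One minor slip, which does not affect the conclusion $K_{\rm can}^2=\mathbf z^TQ_\Gamma^{-1}\mathbf z$: $\mathbf z$ is the coordinate vector of $K_{\rm can}$ in the dual basis, whereas $Q_\Gamma^{-1}\mathbf z$ gives its coordinates in the vertex basis of $H_2(X_\Gamma;\Q)$, not in the dual basis as you wrote.
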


\begin{proof}
Let $K_{\rm can}\in\operatorname{Char}(X_\Gamma)$ denote the canonical
characteristic vector, defined by $K_{\rm can}\cdot v+v\cdot v=-2$
for every $v\in V(\Gamma)$. In the basis dual to $\{[v]\}_{v\in V(\Gamma)}$,
the components of $K_{\rm can}$ are
$(K_{\rm can})_v=a_v-2=z_v$, so $z=K_{\rm can}$ in the dual basis,
and hence
\[
z^T Q_\Gamma^{-1}z=K_{\rm can}^2.
\]
Applying Lemma~\ref{lem:tree-recursion}\eqref{eq:quadratic-recursion}
to the root $\rho$, so that $\Gamma_\rho=\Gamma$, gives
\[
z^T Q_\Gamma^{-1}z=-\sum_{v\in V(\Gamma)}\alpha_v s_v^2.
\]
By~\cite[Remark~4.8]{NemethiNicolaescu2002},
\[
\theta(\xi_{\rm can})=K_{\rm can}^2+N-2,
\]
which combines with the previous two equations to give
\eqref{eq:theta-tree}. Independence of the root follows since
$\theta(\xi_{\rm can})$ is a topological invariant.
\end{proof}

\begin{example}\label{ex:two-branching}
We illustrate the computation of $\theta(\xi_{\rm can})$ on a
negative-definite tree $\Gamma$ with two branching vertices $v_2$
and $v_5$, depicted below.

\begin{center}
\begin{tikzpicture}[
  every node/.style={circle, draw, fill=white, inner sep=2pt,
                     minimum size=18pt, font=\small},
  edge/.style={thick}
]
\node (v1) at (0,0)    {$v_1$};
\node (v2) at (2,0)    {$v_2$};
\node (v5) at (4,0)    {$v_5$};
\node (v7) at (6,0)    {$v_7$};
\node (v3) at (2, 1.5) {$v_3$};
\node (v4) at (2,-1.5) {$v_4$};
\node (v6) at (4, 1.5) {$v_6$};
\node (v8) at (4,-1.5) {$v_8$};
\draw[edge] (v1)--(v2);
\draw[edge] (v2)--(v5);
\draw[edge] (v5)--(v7);
\draw[edge] (v2)--(v3);
\draw[edge] (v2)--(v4);
\draw[edge] (v5)--(v6);
\draw[edge] (v5)--(v8);
\node[draw=none, fill=none] at (0.1,0.47)  {$-2$};
\node[draw=none, fill=none] at (2.4, 0.47) {$-4$};
\node[draw=none, fill=none] at (4.4, 0.47) {$-4$};
\node[draw=none, fill=none] at (6, 0.47)   {$-2$};
\node[draw=none, fill=none] at (2.6, 1.5)  {$-2$};
\node[draw=none, fill=none] at (2.6,-1.5)  {$-2$};
\node[draw=none, fill=none] at (4.6, 1.5)  {$-2$};
\node[draw=none, fill=none] at (4.6,-1.5)  {$-2$};
\end{tikzpicture}
\end{center}

The rotation numbers are $z_{v_2}=z_{v_5}=2$ and $z_v=0$ for all
other vertices. We apply Theorem~\ref{thm:tree-general} with two
different choices of root to illustrate root-independence.

\medskip
\noindent\textbf{Root $v_1$.} The parent--child structure is
$v_1\to\{v_2\}$, $v_2\to\{v_3,v_4,v_5\}$, $v_5\to\{v_6,v_7,v_8\}$. A
topological sort is $v_3,v_4,v_6,v_7,v_8,v_5,v_2,v_1$. At the leaves
$v_3,v_4,v_6,v_7,v_8$, each with $a_v=2$ and $z_v=0$, the recursions
give $\alpha_v=1/2$, $s_v=0$, $\beta_v=0$. At $v_5$ (children
$v_6,v_7,v_8$):
\[
s_{v_5}=2,\quad
\alpha_{v_5}=\tfrac{1}{4-3/2}=\tfrac{2}{5},\quad
\beta_{v_5}=-\tfrac{4}{5}.
\]
At $v_2$ (children $v_3,v_4,v_5$):
\[
s_{v_2}=2-(0+0-\tfrac{4}{5})=\tfrac{14}{5},\quad
\alpha_{v_2}=\tfrac{1}{4-1/2-1/2-2/5}=\tfrac{5}{13},\quad
\beta_{v_2}=-\tfrac{14}{13}.
\]
At the root $v_1$ (one child $v_2$):
\[
s_{v_1}=0-(-\tfrac{14}{13})=\tfrac{14}{13},\quad
\alpha_{v_1}=\tfrac{1}{2-5/13}=\tfrac{13}{21},\quad
\beta_{v_1}=-\tfrac{2}{3}.
\]

\medskip
\noindent\textbf{Root $v_5$.} Now the parent--child structure is
$v_5\to\{v_2,v_6,v_7,v_8\}$, $v_2\to\{v_1,v_3,v_4\}$, with
$v_1,v_3,v_4,v_6,v_7,v_8$ all leaves. The leaf computations are
unchanged. At $v_2$ (children $v_1,v_3,v_4$):
\[
s_{v_2}=2,\quad
\alpha_{v_2}=\tfrac{1}{4-1/2-1/2-1/2}=\tfrac{2}{5},\quad
\beta_{v_2}=-\tfrac{4}{5}.
\]
At the root $v_5$:
\[
s_{v_5}=2-(-\tfrac{4}{5})=\tfrac{14}{5},\quad
\alpha_{v_5}=\tfrac{1}{4-2/5-3/2}=\tfrac{10}{21},\quad
\beta_{v_5}=-\tfrac{4}{3}.
\]

\medskip
\noindent\textbf{Comparison.} Although the individual quantities
$\alpha_v,s_v,\beta_v$ depend on the choice of root, the
contributions $\alpha_v s_v^2$ summed over all vertices coincide:
\[
\renewcommand{\arraystretch}{1.3}
\begin{array}{c|cccccccc|c}
v & v_1 & v_2 & v_3 & v_4 & v_5 & v_6 & v_7 & v_8 & \sum_v \alpha_v s_v^2 \\
\hline
\text{rooted at } v_1: \alpha_v s_v^2 & \tfrac{28}{39} & \tfrac{196}{65} & 0 & 0 & \tfrac{8}{5} & 0 & 0 & 0 & \tfrac{16}{3} \\
\text{rooted at } v_5: \alpha_v s_v^2 & 0 & \tfrac{8}{5} & 0 & 0 & \tfrac{56}{15} & 0 & 0 & 0 & \tfrac{16}{3}
\end{array}
\]

Therefore
\[
\theta(\xi_{\rm can})=(N-2)-\sum_v\alpha_v s_v^2
=6-\tfrac{16}{3}=\tfrac{2}{3},
\]
in agreement with Theorem~\ref{thm:tree-general}.

\medskip
\noindent\textbf{The Heegaard Floer correction term.} Each
non-leaf vertex $v$ of $\Gamma$ satisfies $a_v = \deg(v)$, and each
leaf has $a_v = 2 \geq \deg(v) = 1$, so $\Gamma$ is rational and in
particular almost rational. By Remark~\ref{rem:d-invariant-tree},
\[
d(Y_\Gamma,\mathfrak{s}_{\rm can})
= \frac{\theta(\xi_{\rm can}) + 2}{4}
= \frac{2/3 + 2}{4}
= \frac{2}{3}.
\]
\end{example}

\begin{example}\label{ex:non-AR-trefoil}
Let $\Gamma$ be the tree depicted below with two bad vertices~\cite[Example~4.4.1]{Nemethi2008} (see
also~\cite[Figure~1, $n=2$]{OzsvathStipsiczSzabo2014knots}): 

\begin{center}
\begin{tikzpicture}[
  every node/.style={circle, draw, fill=white, inner sep=2pt,
                     minimum size=18pt, font=\small},
  edge/.style={thick}
]
\node (v2) at (-3, 0.7)  {$v_2$};
\node (v3) at (-3,-0.7)  {$v_3$};
\node (v1) at (-1.5, 0)  {$v_1$};
\node (v0) at (0,0)      {$v_0$};
\node (v4) at (1.5,0)    {$v_4$};
\node (v5) at (3, 0.7)   {$v_5$};
\node (v6) at (3,-0.7)   {$v_6$};
\draw[edge] (v2)--(v1); \draw[edge] (v3)--(v1); \draw[edge] (v1)--(v0);
\draw[edge] (v0)--(v4);
\draw[edge] (v4)--(v5); \draw[edge] (v4)--(v6);
\node[draw=none, fill=none] at (-3.0, 1.15) {$-2$};
\node[draw=none, fill=none] at (-3.0,-1.15) {$-3$};
\node[draw=none, fill=none] at (-1.5, 0.45) {$-1$};
\node[draw=none, fill=none] at ( 0.0, 0.45) {$-13$};
\node[draw=none, fill=none] at ( 1.5, 0.45) {$-1$};
\node[draw=none, fill=none] at ( 3.0, 1.15) {$-2$};
\node[draw=none, fill=none] at ( 3.0,-1.15) {$-3$};
\end{tikzpicture}
\end{center}

The matrix $Q_\Gamma$ is negative definite with $\det Q_\Gamma = -1$,
so $Y_\Gamma$ is an integer homology 3-sphere. Both $-1$-vertices
$v_1, v_4$ are bad ($\deg = 3 > 1 = a_v$), and modifying any single
weight cannot remove the badness of both, so $\Gamma$ is not almost
rational. The graph $\Gamma$ has two vertices of weight $-1$, but
Theorem~\ref{thm:tree-general} still applies.
 Rooting $\Gamma$ at $v_0$,
the leaf computations give $\alpha_{v_2}=1/2, s_{v_2}=0$;
$\alpha_{v_3}=1/3, s_{v_3}=1, \beta_{v_3}=-1/3$; and similarly on the
other arm. At the $-1$-vertex $v_1$ (children $v_2, v_3$),
\[
\alpha_{v_1}=\frac{1}{1-(\tfrac12+\tfrac13)} = 6, \qquad
s_{v_1} = -1 - (0 + (-\tfrac13)) = -\tfrac23, \qquad
\beta_{v_1} = -6\cdot(-\tfrac23) = 4,
\]
and similarly $\alpha_{v_4}=6, s_{v_4}=-2/3, \beta_{v_4}=4$. At the
root $v_0$ (with $a_{v_0}=13$, $z_{v_0}=11$, children $v_1, v_4$),
\[
\alpha_{v_0} = \frac{1}{13-12} = 1, \qquad s_{v_0} = 11 - (4+4) = 3.
\]
Theorem~\ref{thm:tree-general} therefore yields
\[
\theta(\xi_{\rm can}) = (N-2) - \sum_{v\in V(\Gamma)} \alpha_v s_v^2
= 5 - \bigl(\tfrac13 + \tfrac13 + \tfrac83 + \tfrac83 + 9\bigr)
= 5 - 15 = -10.
\]
\end{example}

\begin{example}\label{ex:non-AR}
We give an example of a tree $\Gamma$ for which the inequality in
Remark~\ref{rem:d-invariant-tree} is strict, illustrating that the
hypothesis of almost rationality there is essential. Let $\Gamma$ be
the tree below: two adjacent branching vertices $v_4, v_5$ of weight
$-2$, each carrying three leaves of weight $-4$.

\begin{center}
\begin{tikzpicture}[
  every node/.style={circle, draw, fill=white, inner sep=2pt,
                     minimum size=18pt, font=\small},
  edge/.style={thick}
]
\node (v1) at (-1.5, 1.3) {$v_1$};
\node (v2) at (-1.5, 0)   {$v_2$};
\node (v3) at (-1.5,-1.3) {$v_3$};
\node (v4) at (0,0)       {$v_4$};
\node (v5) at (1.5,0)     {$v_5$};
\node (v6) at (3, 1.3)    {$v_6$};
\node (v7) at (3, 0)      {$v_7$};
\node (v8) at (3,-1.3)    {$v_8$};
\draw[edge] (v1)--(v4); \draw[edge] (v2)--(v4); \draw[edge] (v3)--(v4);
\draw[edge] (v4)--(v5);
\draw[edge] (v5)--(v6); \draw[edge] (v5)--(v7); \draw[edge] (v5)--(v8);
\node[draw=none, fill=none] at (-0.9, 1.3) {$-4$};
\node[draw=none, fill=none] at (-0.9, 0.3)   {$-4$};
\node[draw=none, fill=none] at (-0.9,-1.3) {$-4$};
\node[draw=none, fill=none] at (0.1,  0.45) {$-2$};
\node[draw=none, fill=none] at (1.5,  0.45) {$-2$};
\node[draw=none, fill=none] at (3.6,  1.3) {$-4$};
\node[draw=none, fill=none] at (3.6,  0)   {$-4$};
\node[draw=none, fill=none] at (3.6, -1.3) {$-4$};
\end{tikzpicture}
\end{center}

\medskip
\noindent\textbf{Negative-definiteness.}  $Q_\Gamma$ is negative definite by Sylvester's
criterion. In particular $\det Q_\Gamma = 2304 \neq 0$, so
$Y_\Gamma$ is a rational homology 3-sphere.

\medskip
\noindent\textbf{Failure of almost rationality.} Both branching
vertices are bad, since $a_{v_4}=a_{v_5}=2 < 4 = \deg(v_4)=\deg(v_5)$.
Modifying the weight at any single vertex of $\Gamma$ cannot remove
the badness of both $v_4$ and $v_5$ simultaneously, so $\Gamma$ is not
almost rational in the sense of \cite{Nemethi2005}.

\medskip
\noindent\textbf{The $\theta$-invariant.} Rooting $\Gamma$ at $v_4$,
the recursions of Lemma~\ref{lem:tree-recursion} give
$\alpha_v = 1/4$ and $s_v = 2$ at each leaf. At $v_5$ (children
$v_6, v_7, v_8$),
\[
\alpha_{v_5} = \frac{1}{2 - 3 \cdot \tfrac{1}{4}} = \frac{4}{5},
\qquad
s_{v_5} = 0 - 3\cdot\bigl(-\tfrac{1}{2}\bigr) = \frac{3}{2},
\qquad
\beta_{v_5} = -\frac{6}{5},
\]
and at the root $v_4$ (children $v_1, v_2, v_3, v_5$),
\[
\alpha_{v_4} = \frac{1}{2 - \bigl(\tfrac{3}{4}+\tfrac{4}{5}\bigr)} = \frac{20}{9},
\qquad
s_{v_4} = 0 - \bigl(3\cdot(-\tfrac{1}{2}) + (-\tfrac{6}{5})\bigr) = \frac{27}{10}.
\]
Theorem~\ref{thm:tree-general} therefore yields
\[
\theta(\xi_{\rm can})
\;=\;
(N-2) - \sum_{v \in V(\Gamma)} \alpha_v s_v^2
\;=\;
6 - \Bigl(6 \cdot \tfrac{1}{4} \cdot 4 + \tfrac{4}{5}\cdot\tfrac{9}{4} + \tfrac{20}{9}\cdot\tfrac{729}{100}\Bigr)
\;=\;
6 - 24
\;=\;
-18.
\]

\medskip
\noindent\textbf{The canonical characteristic vector.} The vector
$K_{\rm can} \in \operatorname{Char}(X_\Gamma)$ is determined by
$K_{\rm can} \cdot v + v\cdot v = -2$ for every $v$, so in the basis
dual to $\{[v_i]\}$ the components are $(K_{\rm can})_v = a_v - 2$, namely
\[
K_{\rm can} = (2,2,2,0,0,2,2,2)^T.
\]
By the $v_4 \leftrightarrow v_5$ symmetry of $\Gamma$, the vector
$Q_\Gamma^{-1} K_{\rm can}$ has the form
$(\alpha,\alpha,\alpha,\beta,\beta,\alpha,\alpha,\alpha)^T$,
where $\alpha,\beta$ satisfy the leaf and branching equations
$-4\alpha + \beta = 2$ and $3\alpha - \beta = 0$, giving $\alpha=-2$,
$\beta=-6$. Hence
\[
Q_\Gamma^{-1} K_{\rm can} = (-2,-2,-2,-6,-6,-2,-2,-2)^T,
\qquad
K_{\rm can}^2 = K_{\rm can}^T Q_\Gamma^{-1} K_{\rm can} = 6 \cdot 2 \cdot (-2) = -24,
\]
consistent with the identity $\theta(\xi_{\rm can}) = K_{\rm can}^2 + N - 2$.

\medskip
\noindent\textbf{The maximum of $k^2$ over $\mathfrak{s}_{\rm can}$.}
Characteristic vectors representing $\mathfrak{s}_{\rm can}$ are
exactly
\[
k = K_{\rm can} + 2 Q_\Gamma\, c, \qquad c \in \mathbb{Z}^N,
\]
(see e.g.~\cite[eq.~(3)]{Khan2026}). Expanding,
\[
k^2 = K_{\rm can}^2 + 4\,c^T K_{\rm can} + 4\,c^T Q_\Gamma\, c,
\]
which is strictly concave in $c$ since the Hessian $8 Q_\Gamma$ is
negative definite. Hence $k^2$ attains a unique maximum on
$\mathbb{R}^N$ at the critical point
\[
c^* = -\tfrac{1}{2} Q_\Gamma^{-1} K_{\rm can},
\]
and we have an upper bound $\max_{c \in \mathbb{Z}^N} k^2 \leq k^2(c^*)$,
with equality if and only if $c^* \in \mathbb{Z}^N$. For a generic
negative-definite plumbing, $c^*$ is not integral and the integer
maximum is strictly smaller. In the present example, however, the
$v_4 \leftrightarrow v_5$ symmetry computed above gives
$Q_\Gamma^{-1} K_{\rm can} = (-2,-2,-2,-6,-6,-2,-2,-2)^T$, so
\[
c^* = (1,1,1,3,3,1,1,1)^T \in \mathbb{Z}^N.
\]
The integer maximum therefore coincides with the real maximum, and
the corresponding characteristic vector is
\[
k = K_{\rm can} + 2 Q_\Gamma c^*
= K_{\rm can} - Q_\Gamma Q_\Gamma^{-1} K_{\rm can}
= 0,
\]
giving $\max_{k \in \mathfrak{s}_{\rm can}} k^2 = 0$.

\medskip
\noindent\textbf{Conclusion.} By Khan's theorem~\cite{Khan2026},
\[
d(Y_\Gamma, \mathfrak{s}_{\rm can}) = \frac{0+8}{4} = 2,
\qquad\text{whereas}\qquad
\frac{\theta(\xi_{\rm can})+2}{4} = \frac{-18+2}{4} = -4.
\]
The inequality of Remark~\ref{rem:d-invariant-tree} is therefore
strict, with gap
\[
d(Y_\Gamma,\mathfrak{s}_{\rm can}) - \frac{\theta(\xi_{\rm can})+2}{4}
= \frac{1}{4}\bigl(\max_{k\in\mathfrak{s}_{\rm can}} k^2 - K_{\rm can}^2\bigr)
= 6.
\]
\end{example}

\begin{corollary}\label{cor:star-recovery}
Theorem~\ref{thm:tree-general} reduces to Theorem~\ref{thm:main-seifert}
when $\Gamma$ is star-shaped.
\end{corollary}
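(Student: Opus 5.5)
Root $\Gamma$ at the central vertex $\rho$, with weight $e_0=-a_\rho$. Then $\rho$ has $k$ children, namely the first vertices $v^i_1$ of the legs, and each leg is a chain $v^i_1\to v^i_2\to\cdots\to v^i_{n_i}$ whose leaf is $v^i_{n_i}$. Two routes are available. The quick one is pure linear algebra: by Lemma~\ref{lem:tree-recursion}\eqref{eq:quadratic-recursion} applied at $\rho$, the sum $-\sum_v\alpha_v s_v^2$ equals $\mathbf z^TQ_\Gamma^{-1}\mathbf z$. The proof of Theorem~\ref{thm:main-seifert}, using Lemmas~\ref{lem:schur-k} and~\ref{lem:leg}, computes this same quadratic form and gets the expression that yields the closed formula. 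Both proofs also use the same bookkeeping $N-2=-2\chi-3\sigma+\mathbf z^T Q^{-1}\mathbf z-\mathbf z^TQ^{-1}\mathbf z$. So the two formulas agree, and this settles the corollary.

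I would, however, give the more transparent route that actually runs the recursion. On the leg below $v^i_j$ the recursion \eqref{eq:alpha-recursion} reads $\alpha_{v^i_j}=1/(a^i_j-\alpha_{v^i_{j+1}})$. Inducting from the leaf gives $\alpha_{v^i_j}=1/[a^i_j,\dots,a^i_{n_i}]$, and in particular $\alpha_{v^i_1}=q_i/p_i=r_i$, as noted after Theorem~\ref{thm:intro-main-tree}. At the root, $\alpha_\rho=1/(-e_0-\sum_i r_i)=-1/e(Y)$. Since $\mathbf z_{v^i_1}$ restricted to the leg subtree is $\mathbf w_i$ and $Q_{v^i_1}=A_i$, the defining property of $\beta$ gives
\[
\beta_{v^i_1}=\mathbf u_i^TA_i^{-1}\mathbf w_i=\frac{q_i+1-p_i}{p_i}
\]
by Lemma~\ref{lem:leg}. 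Hence
\[
s_\rho=-e_0-2-\sum_i\frac{q_i+1-p_i}{p_i}=-e_0+k-2-\sum_i\frac{q_i+1}{p_i},
\]
and therefore $-\alpha_\rho s_\rho^2$ is exactly the last term $\frac{1}{e(Y)}(\cdots)^2$ of Theorem~\ref{thm:main-seifert}.

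For the remaining vertices, \eqref{eq:quadratic-recursion} applied at each $v^i_1$ gives
\[
-\sum_{u\in V(\Gamma_{v^i_1})}\alpha_us_u^2=\mathbf w_i^TA_i^{-1}\mathbf w_i=2-I(p_i/q_i)-n_i-\frac{q_i+q_i^*+2}{p_i}.
\]
Summing over the legs and adding $N-2=\sum_i n_i-1$, the $n_i$ cancel. The constant becomes $-1+2k=2k-1$ and the result is precisely the formula of Theorem~\ref{thm:main-seifert}.

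The only delicate point is matching conventions. First, the tree theorem uses the rotation vector $z_v=a_v-2$ with $a_\rho=-e_0$, which is the same vector $\mathbf z=(-e_0-2,\mathbf w)$ used in the Seifert proof. Second, the case $e_0=-1$ is allowed in both statements: Theorem~\ref{thm:tree-general} permits weights $-1$, and Remark~\ref{rem:e0-minus-one} covers the Seifert side. Third, Lemma~\ref{lem:leg} was stated for $n\ge1$ chains, and each leg qualifies since $0<q_i<p_i$. I expect no real obstacle beyond this check.
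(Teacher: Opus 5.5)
Your proposal is correct, and your second route is essentially the paper's own proof. You root at the central vertex, get $\alpha_{v_1^i}=r_i$ and $\alpha_\rho=-1/e(Y)$ from the $\alpha$-recursion, obtain $\beta_{v_1^i}=(q_i+1-p_i)/p_i$ and the leg contributions $\mathbf w_i^TA_i^{-1}\mathbf w_i$ from Lemma~\ref{lem:leg} via \eqref{eq:quadratic-recursion}, add the root term $-\alpha_\rho s_\rho^2$, and assemble using $N-2=\sum_i n_i-1$.
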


\begin{proof}
Root $\Gamma$ at the central vertex $\rho$ of weight $e_0$, so
$z_\rho=-e_0-2$. The children of $\rho$ are the first vertices
$v_1^i$ of each leg $i=1,\dots,k$, and each leg is a linear chain
with weights $-a_1^i,\dots,-a_{n_i}^i$ and continued fraction
$p_i/q_i=[a_1^i,\dots,a_{n_i}^i]$.

\medskip
\noindent\textbf{The $\alpha$-recursion along leg $i$.}
Applying \eqref{eq:alpha-recursion} from the tip of leg $i$ inward
gives $\alpha_{v_1^i}=q_i/p_i=r_i$, the standard continued fraction of
the leg. This is consistent with $(Q_i^{-1})_{v_1^iv_1^i}=-r_i$ as
used in Lemma~\ref{lem:schur-k}.

\medskip
\noindent\textbf{The $\beta$-recursion along leg $i$.}
Applying \eqref{eq:beta-recursion} from the tip inward gives
$\beta_{v_1^i}=(q_i+1-p_i)/p_i$ by Lemma~\ref{lem:leg}.

\medskip
\noindent\textbf{Contribution of leg $i$ to the sum.}
By \eqref{eq:quadratic-recursion} applied to $\Gamma_{v_1^i}$,
\[
-\sum_{u\in V(\Gamma_{v_1^i})}\alpha_u s_u^2
=\mathbf{z}_{v_1^i}^TQ_i^{-1}\mathbf{z}_{v_1^i}
=\mathbf{w}_i^TA_i^{-1}\mathbf{w}_i
=2-I(p_i/q_i)-n_i-\frac{q_i+q_i^*+2}{p_i},
\]
where the last equality is Lemma~\ref{lem:leg}.

\medskip
\noindent\textbf{At the root $\rho$.}
The corrected rotation number at $\rho$ is
\[
s_\rho
=z_\rho-\sum_{i=1}^k\beta_{v_1^i}
=(-e_0-2)-\sum_{i=1}^k\frac{q_i+1-p_i}{p_i}
=-e_0+k-2-\sum_{i=1}^k\frac{q_i+1}{p_i},
\]
and
\[
\alpha_\rho
=\frac{1}{-e_0-\sum_{i=1}^k r_i}
=\frac{-1}{e(Y)},
\]
so the root contributes
\[
-\alpha_\rho s_\rho^2
=\frac{1}{e(Y)}
\!\left(-e_0+k-2-\sum_{i=1}^k\frac{q_i+1}{p_i}\right)^{\!2}.
\]

\medskip
\noindent\textbf{Assembling.}
Since $N=1+\sum_i n_i$, we have $N-2=\sum_i n_i-1$. Therefore
\begin{align*}
\theta(\xi_{\rm can})
&=(N-2)-\sum_{v\in V(\Gamma)}\alpha_v s_v^2\\
&=\!\left(\sum_{i=1}^k n_i-1\right)
+\sum_{i=1}^k\!\left(2-I(p_i/q_i)-n_i-\frac{q_i+q_i^*+2}{p_i}\right)\\
&\quad
+\frac{1}{e(Y)}\!\left(-e_0+k-2-\sum_{i=1}^k\frac{q_i+1}{p_i}\right)^{\!2}\\
&=(2k-1)
-\sum_{i=1}^k\!\left(I(p_i/q_i)+\frac{q_i+q_i^*+2}{p_i}\right)
+\frac{1}{e(Y)}\!\left(-e_0+k-2-\sum_{i=1}^k\frac{q_i+1}{p_i}\right)^{\!2},
\end{align*}
which is exactly Theorem~\ref{thm:main-seifert}.
\end{proof}

\section{$\theta$-minimization and rational homology ball fillings}\label{sec:rhb}

In this final section we use Theorem~\ref{thm:consistent} to obtain a
strict $\theta$-minimization property of $\xi_{\rm can}$ among
all diagram-realizable contact structures, and apply this to rule out
symplectic rational homology ball fillings on a large class of Stein
fillable contact rational homology $3$-spheres.

The following observation is well known
\cite[Lemma~2.14]{EtnyreOzbagciTosun2025}.

\begin{lemma}\label{lem:rhb}
Let $(Y,\xi)$ be an oriented rational homology $3$-sphere equipped
with a contact structure $\xi$. If $\theta(\xi)\neq -2$, then
$(Y,\xi)$ is not symplectically fillable by a rational homology ball.
\end{lemma}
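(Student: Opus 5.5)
The plan is to argue by contraposition: assume $(Y,\xi)$ has a symplectic filling $(W,\omega)$ with $W$ a rational homology ball, and show that then $\theta(\xi)=-2$. The tool is Gompf's definition of $\theta$ recalled at the start of Section~\ref{sec:theta}. We may feed any compact almost complex $4$-manifold $(X,J)$ with $\partial X=Y$ into that definition, provided the complex tangencies along $Y$ are homotopic to $\xi$ as oriented plane fields.

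\textbf{Step 1 (almost complex structure).} Choose an almost complex structure $J$ on $W$ compatible with $\omega$. If the filling is strong, we can take $J$ to be $\omega$-compatible near the boundary in the standard way. Then $TY\cap JTY$ is homotopic to $\xi$, and the orientation of $\xi$ matches the orientation induced by $J$. If the filling is only weak, then since $Y$ is a rational homology sphere, $\omega|_Y$ is exact. We can therefore modify $\omega$ near the boundary to obtain a strong filling (Ohta--Ono / Eliashberg), without changing the topology of $W$. This reduction to the strong case is where I expect the only real care to be needed. Even so, it is standard, and all we actually need is an almost complex structure whose boundary tangencies are homotopic to $\xi$.

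\textbf{Step 2 (topology).} Since $W$ is a rational homology ball, $H_2(W;\Q)=0$. Hence $c_1(W,J)$ vanishes rationally, and $c_1^2(W,J)=0$. The rational pairing used to define $c_1^2$ is in any case defined through $H^2(W,\partial W;\Q)\cong H_2(W;\Q)=0$. We also have $\chi(W)=1$, because the rational Betti numbers are those of a point, and $\sigma(W)=0$, because the intersection form lives on $H_2(W;\Q)=0$.

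\textbf{Step 3 (conclude).} Gompf's formula now gives
\[
\theta(\xi)=c_1^2(W,J)-2\chi(W)-3\sigma(W)=0-2-0=-2,
\]
which is well-defined independently of the choice of $(X,J)$. This contradicts the hypothesis $\theta(\xi)\neq-2$, so no rational homology ball filling exists.
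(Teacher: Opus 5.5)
Your proof is correct and is exactly the standard argument the paper has in mind when it quotes this lemma as well known from Etnyre--Ozbagci--Tosun. Evaluating Gompf's formula on the filling itself gives $\theta(\xi)=0-2\cdot 1-3\cdot 0=-2$, the same kind of computation carried out in Lemma~\ref{lem:symp}; the weak-to-strong deformation in Step~1 is harmless but not needed, since for a weak filling you can directly choose $J$ tamed by $\omega$ with $J\xi=\xi$ along $Y$.
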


We briefly recall the symplectic plumbing trees introduced by
Stipsicz, Szab\'{o} and Wahl~\cite{StipsiczSzaboWahl08}.

\begin{definition}[\cite{StipsiczSzaboWahl08}, Definition~1.1]
A plumbing tree $\Gamma$ on $N$ vertices is a \emph{symplectic
plumbing tree} if $\Gamma$ admits an embedding
\[
\varphi\colon\Gamma\hookrightarrow(\Z^N,Q_N)
\]
into the negative-definite diagonal lattice $(\Z^N,Q_N)$ with
$Q_N=N\langle -1\rangle$, such that:
\begin{itemize}
\item For vertices $v_1\ne v_2\in\Gamma$,
    \[
    Q_N(\varphi(v_1),\varphi(v_2))=
    \begin{cases}
    1 & \text{if }v_1\text{ and }v_2\text{ are adjacent in }\Gamma,\\
    0 & \text{otherwise,}
    \end{cases}
    \]
\item $Q_N(\varphi(v),\varphi(v))$ equals the decoration of $v$, for
      all $v\in\Gamma$,
\item With the basis $\{E_1,\dots,E_N\}$ of $Q_n$ satisfying
      $Q_N(E_i,E_j)=-\delta_{ij}$ and setting $K=\sum_{i=1}^N E_i$,
      the \emph{adjunction equality}
      \[
      Q_N(\varphi(v),K)+Q_N(\varphi(v),\varphi(v))=-2 
      \]
      holds for every vertex $v$ of $\Gamma$.
\end{itemize}
A plumbing tree $\Gamma$ is called \emph{minimal} if there is no
vertex in $\Gamma$ with decoration $-1$. We denote the set of minimal
connected symplectic plumbing trees by $\mathcal{S}$.
\end{definition}

\begin{theorem}[{\cite[Corollary~2.5]{StipsiczSzaboWahl08}}]
If a complex surface singularity admits a rational homology disk
($\Q$HD) smoothing, then its resolution graph $\Gamma$ belongs to
$\mathcal{S}$.
\end{theorem}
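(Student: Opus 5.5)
The natural route is to build a closed negative-definite $4$-manifold out of the resolution plumbing and the rational homology disk smoothing. Then Donaldson's diagonalization theorem supplies the embedding, and the $\theta$-invariant (through Lemma~\ref{lem:rhb} and the identity $\theta(\xi_{\rm can})=K_{\rm can}^2+N-2$) should supply the adjunction equality.

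\textbf{Building the closed manifold.} Let $\Gamma$ be the minimal good resolution graph, with $N$ vertices, and let $X_\Gamma$ be the plumbing, a regular neighborhood of the exceptional divisor. Minimality of $\Gamma$ in the sense of having no $(-1)$-vertices comes from the minimal good resolution; I will not dwell on this. Let $W$ be the Milnor fiber of the $\Q$HD smoothing. Then $W$ is a Stein filling of $(Y_\Gamma,\xi_{\rm can})$ with $b_1(W)=b_2(W)=0$. Form the closed oriented smooth manifold
\[
Z=X_\Gamma\cup_{Y_\Gamma}(-W).
\]
Since $Y_\Gamma$ is a rational homology sphere and $W$ is a rational ball, Mayer--Vietoris gives $b_1(Z)=0$ and $b_2(Z)=b_2^-(Z)=N$. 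Hence $Z$ is negative definite, and Donaldson's theorem gives $H_2(Z;\Z)/\mathrm{Tors}\cong(\Z^N,N\langle-1\rangle)$. The map $H_2(X_\Gamma)\to H_2(Z)/\mathrm{Tors}$ preserves the intersection form, and $Q_\Gamma$ is nondegenerate, so it is injective. Sending each vertex to the image of its sphere class gives $\varphi$, which realizes the first two conditions of the definition automatically.

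\textbf{Adjunction equality.} Since $W$ is a rational homology ball filling of $\xi_{\rm can}$, Lemma~\ref{lem:rhb} gives $\theta(\xi_{\rm can})=-2$. Combined with $\theta(\xi_{\rm can})=K_{\rm can}^2+N-2$ (\cite[Remark~4.8]{NemethiNicolaescu2002}), this yields $K_{\rm can}^2=-N$. Next I extend $K_{\rm can}$ to an integral class on $Z$.

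\begin{enumerate}[label=(\alph*)]
\item The classes $c_1(X_\Gamma,J)$ (which restricts to $K_{\rm can}$ up to the sign convention) and $c_1(W,J_W)$ both restrict to $c_1(\xi_{\rm can})$ on $Y_\Gamma$.
\item Since $H^1(Y_\Gamma;\Z)=0$, Mayer--Vietoris glues them to a class $k\in H^2(Z;\Z)$.
\item Because $H^2(W;\Q)=0$, the rational square of $k$ equals $K_{\rm can}^2=-N$.
\end{enumerate}

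If $k$ is characteristic on the lattice $\Z^N$, then writing $k=\sum m_iE_i^*$ with all $m_i$ odd gives $-\sum m_i^2=-N$. This forces every $m_i=\pm1$. After replacing $E_i$ by $-E_i$ where needed, $k=\sum E_i=K$. Then $Q_N(\varphi(v),K)=K_{\rm can}\cdot v=-2-v\cdot v$, which is exactly the adjunction equality.

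\textbf{Main obstacle.} The delicate step is showing that the glued class $k$ is characteristic on $H_2(Z)/\mathrm{Tors}$. Two issues arise.
\begin{itemize}
\item The gluing is not unique: it is ambiguous up to the image of $H^1(Y_\Gamma)$ and torsion.
\item $H^1(Y_\Gamma;\Z/2)$ may be nonzero when $|H_1(Y_\Gamma)|$ is even, so the mod $2$ reductions of the two Chern classes need not a priori glue to $w_2(Z)$.
\end{itemize}
I would first try to avoid the mod $2$ analysis by gluing the almost complex structures themselves. The two structures induce homotopic plane fields $\xi_{\rm can}$ on $Y_\Gamma$, and $\theta=-2$ is precisely the condition that they extend to an almost complex structure on $Z$, since $c_1^2-2\chi-3\sigma=-N-2(N+2)+3N=-2-N-\ldots$ balances exactly. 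If that works, $k=c_1(Z)$ is automatically characteristic by Wu's formula. Failing that, I would argue directly: evaluate $k$ on classes supported in $X_\Gamma$ using adjunction, and use that $Q_\Gamma$ has finite index in $\Z^N$ together with oddness of $\det$ considerations to control the remaining classes.
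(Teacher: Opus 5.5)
Your overall strategy is the standard one: form the closed manifold $Z=X_\Gamma\cup_{Y_\Gamma}(-W)$, apply Donaldson's theorem, get $K_{\rm can}^2=-N$ from $\theta(\xi_{\rm can})=-2$, and use odd coefficients with $\sum m_i^2=N$ to force $m_i=\pm1$. However, the step you flag as the main obstacle is a genuine gap, and your primary way around it cannot work. $Z$ is closed with $b_1(Z)=0$ and $b_2^+(Z)=0$, so $1-b_1+b_2^+$ is odd, and $Z$ carries no almost complex structure at all. Your own count confirms this: $-N-2(N+2)+3N=-4\neq 0$, so nothing "balances." The underlying reason is that $J_W$ is compatible with the orientation of $W$, not of $-W$, so $-W$ is not an almost complex cap inducing $\xi_{\rm can}$.

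Your fallback only works when $|H_1(Y_\Gamma)|$ is odd. Since $|\det Q_\Gamma|=[\Z^N:\varphi(\Z^{V(\Gamma)})]^2$, the index is then odd, and the characteristic property on the image of $\varphi$ (which adjunction gives you) propagates to all of $\Z^N$. But even orders do occur. For example, the single $(-4)$-vertex, with link $L(4,1)$, has a $\Q$HD smoothing. In that case gluing integral classes really does leave the ambiguity coming from $H^1(Y_\Gamma;\Z/2)$ that you noticed. The norm condition alone does not rescue you either: when $N=4$, the class $2E_1^*$ has square $-4$ without being characteristic.

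The missing idea is to glue spin$^c$ structures rather than Chern classes or almost complex structures. The canonical spin$^c$ structures $\mathfrak{t}_J$ on $X_\Gamma$ and $\mathfrak{t}_{J_W}$ on $W$ both restrict to the \emph{same} spin$^c$ structure $\mathfrak{s}_{\xi_{\rm can}}$ on $Y_\Gamma$. This is strictly more information than equality of first Chern classes when $H_1(Y_\Gamma)$ has $2$-torsion. Because $H^1(Y_\Gamma;\Z)=0$, they glue to a unique spin$^c$ structure $\mathfrak{t}$ on $Z$. Its first Chern class $c_1(\mathfrak{t})$ is exactly your Mayer--Vietoris class $k$, up to the global sign relating $c_1$ and $K_{\rm can}$. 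As the first Chern class of a spin$^c$ structure, it reduces mod $2$ to $w_2(Z)$. By Wu's formula, its image in $\operatorname{Hom}(H_2(Z;\Z)/\mathrm{Tors},\Z)$ is therefore characteristic, and the rest of your argument goes through.

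A smaller point: minimal good resolution graphs can contain $(-1)$-vertices in general, so minimality is not automatic. Here it follows because a $\Q$HD smoothing has $\mu=0$, which forces $p_g=0$ via Steenbrink's $\mu_0+\mu_+=2p_g$. So the singularity is rational, and minimal resolutions of rational singularities are already good.
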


\begin{remark}
Note that if a complex surface singularity admits a $\Q$HD smoothing,
then the Milnor fiber, equipped with its Stein structure, provides a
symplectic rational homology ball filling of the singularity link
$Y_\Gamma$ equipped with its canonical contact structure
$\xi_{\rm can}$, where $\Gamma$ is the resolution graph of the
singularity.
\end{remark}

\begin{lemma}\label{lem:symp}
If $\Gamma\in\mathcal{S}$, then $\theta(\xi_{\rm can})=-2$.
\end{lemma}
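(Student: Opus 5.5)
The plan is to compute $\theta(\xi_{\rm can})$ through the identity $\theta(\xi_{\rm can})=K_{\rm can}^2+N-2$ from \cite[Remark~4.8]{NemethiNicolaescu2002}, already used in the proof of Theorem~\ref{thm:tree-general}. It then suffices to show $K_{\rm can}^2=-N$ whenever $\Gamma\in\mathcal{S}$, where $N=|V(\Gamma)|$. The embedding $\varphi$ into $(\Z^N,Q_N)$ supplies an explicit lattice in which $K_{\rm can}^2$ can be computed.

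First, because $\Gamma$ is negative definite with $N$ vertices and $\varphi$ preserves the pairing, the images $\varphi(v)$ span a full-rank sublattice $L\subset\Z^N$. Hence $L\otimes\Q=\Q^N$, and the rational extension of $Q_\Gamma$ is identified with $Q_N$ on $\Q^N$. The class $K_{\rm can}$ in $H^2(X_\Gamma;\Q)\cong\operatorname{Hom}(L,\Q)$ is determined by its values $K_{\rm can}\cdot v=-2-v\cdot v$. The adjunction equality in the definition states exactly that $K=\sum E_i$ satisfies $Q_N(\varphi(v),K)=-2-Q_N(\varphi(v),\varphi(v))$ for every $v$.

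Since $Q_N$ is nondegenerate on $\Q^N=L\otimes\Q$, the element of $L\otimes\Q$ dual to $K_{\rm can}$ is precisely $K$. Therefore
\[
K_{\rm can}^2=\mathbf z^TQ_\Gamma^{-1}\mathbf z=Q_N(K,K)=-N .
\]
Concretely, writing $M$ for the matrix with columns $\varphi(v)$ in the $E_i$ basis, we have $Q_\Gamma=-M^TM$ with $M$ invertible over $\Q$, and $\mathbf z=-M^T K$ up to sign conventions. Then $\mathbf z^TQ_\Gamma^{-1}\mathbf z=-K^TMM^{-1}M^{-T}M^TK=-|K|^2=-N$.

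Substituting into the identity gives $\theta(\xi_{\rm can})=-N+N-2=-2$.

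The main obstacle is bookkeeping of conventions. The sign of $\mathbf z$ relative to $K$ is irrelevant because the form is quadratic. The real points to check are that the quadratic form on $H^2(Y)$-torsion-free rational classes is computed by $Q_\Gamma^{-1}$, and that the dimension count makes $M$ square: this holds because $\varphi$ maps $N$ vertices into $\Z^N$ injectively as a lattice embedding, and nondegeneracy of $Q_\Gamma$ forces linear independence. Minimality is not needed for the computation itself. Alternatively, one could invoke the Stein/rational-ball picture, but \cite[Corollary~2.5]{StipsiczSzaboWahl08} only goes in one direction, so the lattice computation above is the route I would take.
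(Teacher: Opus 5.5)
Your proposal is correct and is essentially the paper's argument. Both identify the canonical class with $\pm K=\pm\sum E_i$ under the rationalized embedding (the paper writes $\varphi(Z_K)=-K$), deduce $K_{\rm can}^2=Q_N(K,K)=-N$, and substitute into $\theta=c_1^2-2\chi-3\sigma=K_{\rm can}^2+N-2$. Your version also spells out why $K$ lies in the rational span of $\varphi(\Gamma)$ (full rank from nondegeneracy of $Q_\Gamma$), which the paper leaves implicit.
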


\begin{proof}
Let $\Gamma\in\mathcal{S}$ and suppose that $\Gamma$ has $N$ vertices.
If $Z_K$ is the canonical cycle, then $\varphi(Z_K)=-K$ and it follows
that
\[
Z_K\cdot Z_K=Q_N(\varphi(Z_K),\varphi(Z_K))=Q_N(K,K)=-N,
\]
since $K=\sum_{i=1}^N E_i$ and $Q_N(E_i,E_j)=-\delta_{ij}$. Let
$X_\Gamma$ denote the canonical $4$-manifold obtained by the plumbing
graph $\Gamma$. Using the facts that $\sigma(X_\Gamma)=-N$ and
$\chi(X_\Gamma)=N+1$, we obtain
\begin{equation}\label{eq-theta}
\begin{split}
\theta(\xi_{\rm can})
&=c_1^2(X_\Gamma,J)-3\sigma(X_\Gamma)-2\chi(X_\Gamma)\\
&=Z_K\cdot Z_K-3(-N)-2(N+1)=-N+3N-2N-2=-2,
\end{split}
\end{equation}
since $c_1^2(X_\Gamma,J)=Z_K\cdot Z_K$.
\end{proof}

By Lemma~\ref{lem:symp}, $\theta(\xi_{\rm can})=-2$ for each
$\Gamma\in\mathcal{S}$, and $\mathcal{S}$ decomposes as the union
\[
\mathcal{G}\cup\mathcal{W}\cup\mathcal{N}\cup\mathcal{M}\cup\mathcal{A}\cup\mathcal{B}\cup\mathcal{C}
\]
of seven explicit families as shown by Stipsicz, Szab\'{o} and
Wahl~\cite{StipsiczSzaboWahl08}. They also showed that if $\Gamma$
belongs to $\mathcal{G}\cup\mathcal{W}\cup\mathcal{N}\cup\mathcal{M}$,
then the corresponding singularity admits a $\Q$HD smoothing, which
implies that $(Y_\Gamma,\xi_{\rm can})$ has a rational homology
ball symplectic filling. However, this result holds only for some of the plumbing trees in
$\mathcal{A}\cup\mathcal{B}\cup\mathcal{C}$, as shown
in~\cite{BhupalStipsicz11} (see also~\cite{Beke25}). 

We now turn to \emph{inconsistent} contact structures on $Y_\Gamma$,
where $\Gamma$ is described as in the introduction. The inequality in
Proposition~\ref{prop:minimum} below was proved
in~\cite[Proposition~1.7]{EtnyreOzbagciTosun2025} for \emph{all
tight} contact structures on $Y_\Gamma$, where $\Gamma$ is a
star-shaped plumbing tree with three legs. The proof there relies on
the classification of tight contact structures on $Y_\Gamma$, but
such a classification is not available for a general
negative-definite plumbing $\Gamma$. So here we only prove the
inequality for diagram-realizable contact structures, which are
inconsistent.

\begin{proposition}\label{prop:minimum}
The strict inequality
\[
\theta(\xi_{\rm can})<\theta(\xi)
\]
holds for any inconsistent diagram-realizable contact structure $\xi$
on $Y_\Gamma$.
\end{proposition}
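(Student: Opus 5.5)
We compare both contact structures on the same Stein handlebody. Every diagram-realizable contact structure $\xi$ on $Y_\Gamma$ is the boundary of a Stein structure $J_\xi$ on the plumbing $X_\Gamma$. This holds for the same underlying smooth $4$-manifold as for $\xi_{\rm can}$, by Theorem~\ref{thm:consistent}. Hence $\chi(X_\Gamma)=N+1$ and $\sigma(X_\Gamma)=-N$ are common to both, and Gompf's formula \cite[Proposition~2.3]{Gompf98} gives
\[
\theta(\xi)=\mathbf r^TQ_\Gamma^{-1}\mathbf r+N-2,
\qquad
\theta(\xi_{\rm can})=\mathbf z^TQ_\Gamma^{-1}\mathbf z+N-2.
\]
Here $\mathbf r=(r_v)$ is the rotation vector of the Legendrian realization defining $\xi$, and $\mathbf z=(z_v)=(a_v-2)$. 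We use the fixed orientations for which adjacent unknots link $+1$, as in Section~\ref{sec:mainsurgery}. Since $L_v$ has $\tb(L_v)=1-a_v$ and is obtained from the $\tb=-1$ unknot by $a_v-2$ stabilizations, we have $r_v=P_v-M_v$. Here $P_v+M_v=z_v$ counts the positive and negative stabilizations, so $|r_v|\le z_v$. (Minimality of $\Gamma$ gives $a_v\ge 2$, so every vertex is Legendrian-realizable.)

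Set $M:=-Q_\Gamma^{-1}$. The key step is to write the difference as a sum with nonnegative coefficients:
\[
\theta(\xi)-\theta(\xi_{\rm can})
=\mathbf z^TM\mathbf z-\mathbf r^TM\mathbf r
=\sum_{u,v\in V(\Gamma)}M_{uv}\,(z_uz_v-r_ur_v).
\]
The matrix $-Q_\Gamma$ is symmetric positive definite with nonpositive off-diagonal entries, so it is a Stieltjes matrix. Its off-diagonal pattern is the adjacency of the connected tree $\Gamma$, so it is irreducible. The standard fact about irreducible Stieltjes matrices then gives that every entry of $M=(-Q_\Gamma)^{-1}$ is strictly positive. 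Each summand is nonnegative, because $z_uz_v\ge|r_u||r_v|\ge r_ur_v$. Therefore $\theta(\xi)\ge\theta(\xi_{\rm can})$.

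For strictness we split according to how inconsistency occurs.
\begin{itemize}
\item If some single unknot $L_u$ carries stabilizations of both signs, then $|r_u|<z_u$. The diagonal term $M_{uu}(z_u^2-r_u^2)$ is then strictly positive.
\item Otherwise every $L_v$ is stabilized with one sign only, so $r_v=\pm z_v$. Inconsistency then forces two vertices $u\ne v$ with $r_u>0>r_v$. The off-diagonal term satisfies $M_{uv}(z_uz_v-r_ur_v)=2M_{uv}z_uz_v>0$, and it appears twice in the symmetric sum.
\end{itemize}
In either case the sum is strictly positive, which proves the proposition.

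The main point requiring care is the positivity $M_{uv}>0$ for all pairs of vertices, not just adjacent ones. If one prefers not to cite the Stieltjes-matrix theorem, positivity can be proved directly using the recursion of Lemma~\ref{lem:tree-recursion}. One roots the tree at $u$ and shows inductively via the block inverse formula that the column $Q_u^{-1}\mathbf e_u$ has all entries negative. The Schur complement $\delta_u<0$ and the blocks $-A^{-1}U^T$ propagate negativity along each path from the root. A secondary check is the sign convention: reversing all orientations maps $\mathbf r\mapsto-\mathbf r$ and leaves $\theta$ unchanged. Consequently the identification of $\xi_{\rm can}$ with $\mathbf r=\pm\mathbf z$ is consistent with the computation above.
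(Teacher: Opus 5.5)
Your proposal is correct and follows essentially the same route as the paper. Both arguments reduce $\theta(\xi)-\theta(\xi_{\rm can})$ to $\mathbf z^TM\mathbf z-\mathbf r^TM\mathbf r$ on the common plumbing $X_\Gamma$ and use the entrywise positivity of $M=-Q_\Gamma^{-1}$ from the irreducible Stieltjes property. Your termwise bound $z_uz_v\ge|r_u||r_v|\ge r_ur_v$, together with the two-case strictness argument phrased directly in terms of stabilization signs, is a repackaging of the paper's chain $\mathbf r^TM\mathbf r\le|\mathbf r|^TM|\mathbf r|\le\mathbf z^TM\mathbf z$ and its equality analysis.
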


\begin{proof}
For each vertex $u\in V(\Gamma)$, let $-a_u$ denote the decoration
(self-intersection) at $u$, so $a_u\geq 2$ by minimality of $\Gamma$.
A diagram-realizable contact structure on $Y_\Gamma$ is determined
(up to isotopy) by a choice of rotation number $\rot(L_u)$ for each
Legendrian unknot $L_u$ in the surgery diagram. Since
$\tb(L_u)=1-a_u$, we have $|\rot(L_u)|\leq a_u-2$ with
$\rot(L_u)\equiv a_u-2\pmod{2}$. Collecting these, we regard the
rotation data as a vector
\[
\mathbf{z}=(z_u)_{u\in V(\Gamma)},\qquad z_u:=\rot(L_u),
\]
subject to $|z_u|\leq a_u-2$. The \emph{maximal} rotation vector is
\[
\mathbf{c}:=(a_u-2)_{u\in V(\Gamma)};
\]
so the consistent diagrams correspond to $\mathbf{z}=\pm\mathbf{c}$
(all rotation numbers at the maximum in absolute value, with a
uniform sign), and inconsistent diagrams are those with
$\mathbf{z}\neq\pm\mathbf{c}$.

By Theorem~\ref{thm:consistent}, the canonical contact structure
$\xi_{\rm can}$ is realized by the consistent assignment, i.e.,
by rotation vector $\pm\mathbf{c}$. Since $c_1^2$ is quadratic in the
rotation vector, the two choices $+\mathbf{c}$ and $-\mathbf{c}$ give
the same value of $\theta$.

Let $\xi$ be an inconsistent diagram-realizable contact structure
with rotation vector $\mathbf{z}$; so $|z_u|\leq a_u-2$ for all $u$
and $\mathbf{z}\neq\pm\mathbf{c}$. Both $\xi$ and
$\xi_{\rm can}$ are induced by Stein structures on the same
smooth plumbing $X_\Gamma$, by Gompf's
theorem~\cite[Theorem~2.1]{Gompf98}. Hence the topological term
$2\chi(X_\Gamma)+3\sigma(X_\Gamma)$ is the same for both, and
\[
\theta(\xi)-\theta(\xi_{\rm can})
=\mathbf{z}^TQ_\Gamma^{-1}\mathbf{z}-\mathbf{c}^TQ_\Gamma^{-1}\mathbf{c},
\]
where $Q_\Gamma$ is the intersection matrix of $\Gamma$.

Since $\Gamma$ is connected and negative definite, $-Q_\Gamma$ is an
irreducible Stieltjes matrix, so
\[
M:=-Q_\Gamma^{-1}
\]
has strictly positive entries. Thus
\[
\theta(\xi)-\theta(\xi_{\rm can})
=\mathbf{c}^TM\mathbf{c}-\mathbf{z}^TM\mathbf{z}.
\]

Since every entry of $M$ is positive,
\[
\mathbf{z}^TM\mathbf{z}
=\sum_{u,v}M_{uv}z_u z_v
\leq\sum_{u,v}M_{uv}|z_u||z_v|
=|\mathbf{z}|^TM|\mathbf{z}|,
\]
and since $|z_u|\leq c_u=a_u-2$ for every $u$,
\[
|\mathbf{z}|^TM|\mathbf{z}|\leq\mathbf{c}^TM\mathbf{c}.
\]
Therefore $\mathbf{z}^TM\mathbf{z}\leq\mathbf{c}^TM\mathbf{c}$.

Equality in the first inequality holds only when all nonzero $z_u$
have the same sign. Equality in the second inequality holds only
when $|z_u|=a_u-2$ for every $u$. Together, equality throughout
holds precisely when $\mathbf{z}=\pm\mathbf{c}$.

Since $\xi$ is inconsistent, $\mathbf{z}\neq\pm\mathbf{c}$, and hence
\[
\mathbf{z}^TM\mathbf{z}<\mathbf{c}^TM\mathbf{c}.
\]
Equivalently, $\theta(\xi)-\theta(\xi_{\rm can})>0$, so
\[
\theta(\xi_{\rm can})<\theta(\xi).
\]
\end{proof}

\begin{corollary}\label{cor:d-strict}
Let $\Gamma$ be a minimal connected negative-definite plumbing tree with all 
vertices of genus zero, and assume that $\Gamma$ is almost rational. Let $\xi$ 
be an inconsistent diagram-realizable contact structure on $Y_\Gamma$, and let 
$\mathfrak{s}_\xi \in \mathrm{Spin}^c(Y_\Gamma)$ denote the Spin$^c$ structure 
induced by $\xi$. Then
\[
d(Y_\Gamma, \mathfrak{s}_{\mathrm{can}}) < d(Y_\Gamma, \mathfrak{s}_\xi).
\]
\end{corollary}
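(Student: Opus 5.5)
The plan is to sandwich the two correction terms between the corresponding $\theta$-invariants. The canonical side will be handled by the almost rational equality of Remark~\ref{rem:d-invariant-tree}. The side of $\xi$ will be handled by the Ozsv\'ath--Szab\'o inequality applied to the Stein filling that realizes $\xi$. The strict inequality of Proposition~\ref{prop:minimum} then links the two.

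\textbf{Step 1 (canonical side).} Since $\Gamma$ is almost rational, Remark~\ref{rem:d-invariant-tree} (Khan's theorem together with N\'emethi's characterization of when $K_{\rm can}$ is maximizing) gives
\[
d(Y_\Gamma,\mathfrak{s}_{\rm can})=\frac{K_{\rm can}^2+N}{4}=\frac{\theta(\xi_{\rm can})+2}{4}.
\]

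\textbf{Step 2 (side of $\xi$).} Let $\xi$ be realized by Legendrian surgery with rotation vector $\mathbf{z}=(\rot(L_u))_u$. Let $(X_\Gamma,J)$ be the resulting Stein filling. By Gompf's formula \cite[Proposition~2.3]{Gompf98}, $c_1(X_\Gamma,J)$ evaluates to $z_u$ on the generator $[\Sigma_u]$. Since $z_u\equiv a_u-2\equiv a_u \pmod 2$, the class $k_\xi:=c_1(X_\Gamma,J)$ is characteristic. Its restriction to the boundary is the $\mathrm{Spin}^c$ structure $\mathfrak{s}_\xi$, because the canonical $\mathrm{Spin}^c$ structure of the almost complex filling restricts to the one induced by the complex tangencies. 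This identification is the only point needing a citation rather than a computation, and I expect it to be the main (though standard) obstacle.

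Because $X_\Gamma$ is negative definite with $b_1=0$, the Ozsv\'ath--Szab\'o inequality \cite[Theorem~9.6]{OzsvathSzabo2003abs} (or, more simply, the lower bound in Khan's formula, since $k_\xi$ represents $\mathfrak{s}_\xi$) gives
\[
d(Y_\Gamma,\mathfrak{s}_\xi)\ \ge\ \frac{k_\xi^2+N}{4}=\frac{\mathbf{z}^TQ_\Gamma^{-1}\mathbf{z}+N}{4}.
\]
Using $\chi(X_\Gamma)=N+1$ and $\sigma(X_\Gamma)=-N$, we have $-2\chi-3\sigma=N-2$. Hence $\theta(\xi)=\mathbf{z}^TQ_\Gamma^{-1}\mathbf{z}+N-2$, and the right-hand side equals $(\theta(\xi)+2)/4$.

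\textbf{Step 3 (conclusion).} Proposition~\ref{prop:minimum} gives $\theta(\xi_{\rm can})<\theta(\xi)$. Chaining the two steps,
\[
d(Y_\Gamma,\mathfrak{s}_{\rm can})=\frac{\theta(\xi_{\rm can})+2}{4}<\frac{\theta(\xi)+2}{4}\le d(Y_\Gamma,\mathfrak{s}_\xi).
\]
As a by-product, $\mathfrak{s}_\xi\neq\mathfrak{s}_{\rm can}$.
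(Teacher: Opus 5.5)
Your sandwich argument is the same as the paper's: the Ozsv\'ath--Szab\'o inequality for the Stein filling realizing $\xi$, then Proposition~\ref{prop:minimum}, then an equality on the canonical side. Steps~2 and~3 are fine. The gap is Step~1, and the paper's proof has the same gap, since it cites \cite[Theorem~6.3]{Nemethi2005} for the claim that almost rationality makes $K_{\rm can}$ maximize $k^2$ over $\mathfrak{s}_{\rm can}$.

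Write $k=K_{\rm can}+2Q_\Gamma c$ with $c\in\Z^N$. Then $k^2=K_{\rm can}^2-8\chi(c)$, where $\chi(c)=-\tfrac12\bigl(c^TQ_\Gamma c+c^TK_{\rm can}\bigr)$. So Khan's formula reads
\[
d(Y_\Gamma,\mathfrak{s}_{\rm can})=\frac{K_{\rm can}^2+N}{4}-2\min_{c\in\Z^N}\chi(c).
\]
The correction term vanishes only when $\chi\ge 0$ on $\Z^N$; for minimal $\Gamma$ this means $\chi\ge0$ on effective cycles, i.e.\ $\Gamma$ is rational or elliptic. Almost rationality does not force this.

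Here is a concrete counterexample. Let $\Gamma$ be the star with central weight $-6$ and ten legs, each a single $(-2)$-vertex. It is minimal and star-shaped, hence almost rational. It is negative definite, since $e(Y)=-6+10\cdot\tfrac12=-1$.

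\begin{itemize}
\item We have $K_{\rm can}=(4;0,\dots,0)$ and $K_{\rm can}^2=16/e(Y)=-16$, so $(K_{\rm can}^2+N)/4=-5/4$.
\item For $c=(2;1,\dots,1)$ one gets $Q_\Gamma c=(-2;0,\dots,0)$, hence $K_{\rm can}+2Q_\Gamma c=0\in\mathfrak{s}_{\rm can}$.
\item The same inequality you use in Step~2, applied to $k=0$, gives $d(Y_\Gamma,\mathfrak{s}_{\rm can})\ge 11/4$. So the equality in Step~1 is false here; the same example also contradicts Remark~\ref{rem:d-invariant}.
\end{itemize}

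The statement itself fails for this $\Gamma$. Take the inconsistent diagram in which the central unknot (with $\tb=-5$) is stabilized twice with each sign, so all rotation numbers are $0$. It has $c_1=0$, so $\mathfrak{s}_\xi=\mathfrak{s}_{\rm can}$, and the strict inequality $d(Y_\Gamma,\mathfrak{s}_{\rm can})<d(Y_\Gamma,\mathfrak{s}_\xi)$ is impossible. This holds even though $\theta(\xi)=9>-7=\theta(\xi_{\rm can})$, as Proposition~\ref{prop:minimum} predicts. Your by-product $\mathfrak{s}_\xi\neq\mathfrak{s}_{\rm can}$ is exactly what breaks.

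Your argument goes through verbatim if ``almost rational'' is replaced by the hypothesis $\chi\ge 0$ on $\Z^N$ (for example, $\Gamma$ rational). In that case Khan's formula gives the equality of Step~1.
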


\begin{proof}
The contact structure $\xi$ is 
the contact boundary of the Stein structure $J_\xi$ on the plumbing 4-manifold 
$X_\Gamma$ obtained from the Legendrian-surgery diagram realizing the rotation 
vector of $\xi$. Let $K_\xi := c_1(X_\Gamma, J_\xi) \in H^2(X_\Gamma; \mathbb{Z})$ 
denote the corresponding characteristic vector, satisfying 
$\langle K_\xi, [C_v]\rangle = \mathrm{rot}(L_v)$ for every vertex $v$. Since 
$X_\Gamma$ is a negative-definite 4-manifold with $b_2(X_\Gamma) = N$ and 
$\partial X_\Gamma = Y_\Gamma$, the Ozsv\'ath--Szab\'o inequality 
\cite[Theorem~9.6]{OzsvathSzabo2003abs}, applied to the Spin$^c$ structure 
$\mathfrak{s}_{J_\xi}$ on $X_\Gamma$ underlying $J_\xi$, gives
\begin{equation}\label{eq:OS-bound}
\frac{K_\xi^2 + N}{4} \;\leq\; d(Y_\Gamma, \mathfrak{s}_\xi),
\end{equation}
where $\mathfrak{s}_\xi := \mathfrak{s}_{J_\xi}|_{Y_\Gamma}$. By Gompf's 
handlebody formula,
\begin{equation}\label{eq:theta-K}
\theta(\xi) \;=\; K_\xi^2 + N - 2,
\end{equation}
so \eqref{eq:OS-bound} can be rewritten as 
$\frac{\theta(\xi) + 2}{4} \leq d(Y_\Gamma, \mathfrak{s}_\xi)$. 
By Proposition~\ref{prop:minimum},
\begin{equation}\label{eq:theta-strict}
\theta(\xi_{\mathrm{can}}) \;<\; \theta(\xi).
\end{equation}
Since $\Gamma$ is almost rational, $K_{\mathrm{can}}$ realizes 
$\max_{k \in \mathfrak{s}_{\mathrm{can}}} k^2$ by 
\cite[Theorem~6.3]{Nemethi2005}, and Khan's theorem \cite{Khan2026} then gives 
equality in \eqref{eq:OS-bound} for the canonical Spin$^c$ structure 
(cf.\ Remark~1.6):
\begin{equation}\label{eq:can-equality}
d(Y_\Gamma, \mathfrak{s}_{\mathrm{can}}) 
\;=\; \frac{K_{\mathrm{can}}^2 + N}{4}
\;=\; \frac{\theta(\xi_{\mathrm{can}}) + 2}{4}.
\end{equation}
Combining \eqref{eq:can-equality}, \eqref{eq:theta-strict}, and 
\eqref{eq:OS-bound},
\[
d(Y_\Gamma, \mathfrak{s}_{\mathrm{can}}) 
\;=\; \frac{\theta(\xi_{\mathrm{can}}) + 2}{4} 
\;<\; \frac{\theta(\xi) + 2}{4} 
\;\leq\; d(Y_\Gamma, \mathfrak{s}_\xi). \qedhere
\]
\end{proof}

\begin{definition}
We denote by $\mathcal{T}$ the set of all minimal connected
negative-definite plumbing trees $\Gamma$ such that
$\theta(\xi_{\rm can})=-2$.
\end{definition}

By Lemma~\ref{lem:symp}, $\mathcal{S}$ is a subset of $\mathcal{T}$.
In fact it is a proper subset: for $n\geq 2$ the star-shaped tree
$\mathrm{fpp}(n)$ of \cite[Example~5.1]{Beke25} has a node of valency
$n^2+n+1\geq 7$, hence does not belong to any of the seven families
in $\mathcal{S}$ (in which all nodes have valency at most $4$), so
$\mathrm{fpp}(n)\notin\mathcal{S}$. On the other hand
$\mathrm{fpp}(n)\in\mathcal{T}$ by \cite[Theorem~4.1]{Beke25}. See Section~\ref{sec:examples} for more examples of negative-definite plumbing trees in $\mathcal{T} \setminus \mathcal{S}$.

\begin{corollary}\label{cor:S}
Suppose that $\Gamma\in\mathcal{T}$ and $\xi$ is a diagram-realizable
contact structure on $Y_\Gamma$. If $\xi$ is inconsistent, then
$(Y_\Gamma,\xi)$ does not have a rational homology ball symplectic
filling.
\end{corollary}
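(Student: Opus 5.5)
The argument chains three results already established. Let $\Gamma\in\mathcal{T}$ and let $\xi$ be an inconsistent diagram-realizable contact structure on $Y_\Gamma$.

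\textbf{Step 1.} By the definition of $\mathcal{T}$, $\theta(\xi_{\rm can})=-2$. Here $\xi_{\rm can}$ is identified, via Theorem~\ref{thm:consistent}, with the consistent diagram-realizable structure; since $\Gamma$ is minimal, every vertex is Legendrian-realizable.

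\textbf{Step 2.} Since $\xi$ is inconsistent, Proposition~\ref{prop:minimum} gives the strict inequality $\theta(\xi)>\theta(\xi_{\rm can})=-2$. In particular $\theta(\xi)\neq -2$.

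\textbf{Step 3.} Lemma~\ref{lem:rhb} then shows that $(Y_\Gamma,\xi)$ admits no symplectic rational homology ball filling. To apply it, we need $Y_\Gamma$ to be a rational homology sphere. This holds because $Q_\Gamma$ is negative definite, so $\det Q_\Gamma\neq 0$ and $H_1(Y_\Gamma;\Q)=0$.

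There is no real obstacle; the only care needed is to check the hypotheses of each cited result. Proposition~\ref{prop:minimum} requires $\Gamma$ minimal, connected, negative definite and genus zero, and membership in $\mathcal{T}$ guarantees all of these. Lemma~\ref{lem:rhb} requires $\xi$ to be a genuine contact structure. This holds because $\xi$ arises from Legendrian surgery, so it is in fact Stein fillable.
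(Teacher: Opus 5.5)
Your proposal is correct and follows the paper's proof: $\theta(\xi_{\rm can})=-2$ holds because $\Gamma\in\mathcal{T}$, with $\xi_{\rm can}$ realized by the consistent diagram via Theorem~\ref{thm:consistent}. Proposition~\ref{prop:minimum} then gives $\theta(\xi)>-2$, and Lemma~\ref{lem:rhb} rules out a rational homology ball filling. Your checks of the hypotheses (Legendrian realizability from minimality, and $Y_\Gamma$ being a rational homology sphere from negative definiteness) are appropriate extra care that the paper leaves implicit.
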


\begin{proof}
Combining Theorem~\ref{thm:consistent} and
Proposition~\ref{prop:minimum}, for any inconsistent contact
structure $\xi$ on $Y_\Gamma$ we have the inequality
\[
-2=\theta(\xi_{\rm can})<\theta(\xi),
\]
which proves that $(Y_\Gamma,\xi)$ does not have a rational homology
ball symplectic filling, by Lemma~\ref{lem:rhb}.
\end{proof}

\section{Some examples with $\theta=-2$}\label{sec:examples}

In this section we provide more examples of negative-definite plumbing trees in $\mathcal{T} \setminus \mathcal{S}$.

\begin{lemma}\label{lem:theta-three-families}
Let $\ell \ge 1$, and let $\Gamma$ be a star-shaped plumbing graph with $k$ legs, each leg a linear chain of length $\ell$, all vertices having framing $-2$. Let the central vertex have framing $-b$. Then the canonical contact structure $\xi_{\rm can}$ on the Seifert fibered space $Y_\Gamma$ satisfies $\theta(\xi_{\rm can})=-2$ in each of the following cases:
\begin{enumerate}
\item[(1)] $k=4\ell+4$ and $b=k=4\ell+4$,
\item[(2)] $k=\ell^2+3\ell+3$ and $b=k+1=\ell^2+3\ell+4$,
\item[(3)] $k=4\ell^2-3$ and $b=k+4=4\ell^2+1$.
\end{enumerate}
\end{lemma}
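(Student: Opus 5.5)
Apply Theorem~\ref{thm:main-seifert}, or equivalently the rooted recursion of Theorem~\ref{thm:tree-general} with root the central vertex. Each leg is a chain of $\ell$ vertices of weight $-2$, so $p_i/q_i=[2,\dots,2]=(\ell+1)/\ell$, giving $p=\ell+1$ and $q=\ell$. Since $\ell\cdot\ell=\ell^2\equiv 1\pmod{\ell+1}$, we have $q^*=\ell$. Moreover $I(p/q)=-\ell$, and $e_0=-b$.

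For $b\ge 2$ the hypotheses of Theorem~\ref{thm:main-seifert} hold once we check $e(Y)=-b+k\ell/(\ell+1)<0$. Then the legs contribute
\[
I+\frac{q+q^*+2}{p}=-\ell+\frac{2\ell+2}{\ell+1}=2-\ell \quad\text{each},
\qquad \frac{q+1}{p}=1 \quad\text{each}.
\]
This simplifies the formula to
\[
\theta(\xi_{\rm can})=(2k-1)-k(2-\ell)+\frac{(b-2)^2}{-b+k\ell/(\ell+1)}
=k\ell-1-\frac{(\ell+1)(b-2)^2}{(\ell+1)b-k\ell}.
\]
The check from the tree recursion is immediate: each leg vertex has $s=0$ and contributes nothing, while the root has $s=b-2$ and $\alpha=1/(b-k\ell/(\ell+1))$. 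So $\theta=-2$ is equivalent to
\[
(k\ell+1)\bigl((\ell+1)b-k\ell\bigr)=(\ell+1)(b-2)^2 .
\]

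Now substitute each case and verify this polynomial identity in $\ell$; this is the only real computation.

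\textbf{Case (1).} Here $b=k=4\ell+4$. The denominator is $(\ell+1)k-k\ell=k=4(\ell+1)$, so the left side is $(k\ell+1)\cdot 4(\ell+1)$. Since $k\ell+1=4\ell^2+4\ell+1=(2\ell+1)^2$, the left side equals $4(\ell+1)(2\ell+1)^2$. The right side is $(\ell+1)(4\ell+2)^2=4(\ell+1)(2\ell+1)^2$, so the identity holds.

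\textbf{Case (2).} Here $b=k+1$. The denominator is $k+\ell+1=\ell^2+4\ell+4=(\ell+2)^2$. Also $k\ell+1=\ell^3+3\ell^2+3\ell+1=(\ell+1)^3$, and $b-2=k-1=\ell^2+3\ell+2=(\ell+1)(\ell+2)$. The left side is then $(\ell+1)^3(\ell+2)^2$ and the right side is $(\ell+1)\cdot(\ell+1)^2(\ell+2)^2$, which agree.

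\textbf{Case (3).} Here $b=k+4$. The denominator is $k+4(\ell+1)=4\ell^2+4\ell+1=(2\ell+1)^2$. Next, $k\ell+1=4\ell^3-3\ell+1=(\ell+1)(2\ell-1)^2$, and $b-2=4\ell^2-1=(2\ell-1)(2\ell+1)$. Both sides equal $(\ell+1)(2\ell-1)^2(2\ell+1)^2$.

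In every case the denominator is positive, so $e(Y)<0$ and negative definiteness hold automatically. Also $b\ge 2$ holds in each case, since $b\ge 5$ for $\ell\ge1$.

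The main point requiring care is admissibility rather than algebra. In case (3) with $\ell=1$ we get $k=1$, a degenerate star: the graph is a linear chain with central weight $-5$ and one $-2$ leg. Theorem~\ref{thm:tree-general} still applies there, because the recursion and the identity $\theta=K_{\rm can}^2+N-2$ need no $k\ge3$. To keep the argument uniform I would use Theorem~\ref{thm:tree-general}, whose computation is exactly the one above, rather than Theorem~\ref{thm:main-seifert} with its standing assumption $k\ge 3$ from Lemma~\ref{lem:schur-k}.
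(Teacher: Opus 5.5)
Your proof is correct and follows the paper's argument. You substitute $p=\ell+1$, $q=q^*=\ell$, $I=-\ell$ into Theorem~\ref{thm:main-seifert}, which reduces $\theta(\xi_{\rm can})=-2$ to $(\ell+1)(b-2)^2=(k\ell+1)\bigl((\ell+1)b-k\ell\bigr)$, and then verify this identity case by case using the same factorizations as the paper. Your extra remark that case (3) with $\ell=1$ gives the degenerate star $k=1$ (the linear chain $[5,2]$, i.e.\ $L(9,2)$) is a point the paper does not address, and routing that case through Theorem~\ref{thm:tree-general} handles it cleanly.
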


\begin{proof}
Since each leg is a linear chain of length $\ell$ with all framings equal to $-2$, we have
\[
[2,\dots,2]=\frac{\ell+1}{\ell}.
\]
Thus for every leg
\[
p=\ell+1,\qquad q=\ell,\qquad q^*=\ell,\qquad I(p/q)=-\ell.
\]
Substituting into the general formula for $\theta(\xi_{\rm can})$ given in Theorem~\ref{thm:main-seifert}, we obtain
\[
\theta(\xi_{\rm can})
=
k\ell-1+\frac{(b-2)^2}{-b+\frac{k\ell}{\ell+1}}.
\]
Therefore $\theta(\xi_{\rm can})=-2$ is equivalent to
\begin{equation}\label{eq:theta-condition}
(\ell+1)(b-2)^2=(k\ell+1)\big((\ell+1)b-k\ell\big).
\end{equation}
We verify \eqref{eq:theta-condition} in each case.

\medskip

\noindent
\textbf{Case (1):} $k=4\ell+4$, $b=4\ell+4$.

\[
b-2=4\ell+2=2(2\ell+1),
\quad
k\ell+1=4\ell^2+4\ell+1=(2\ell+1)^2,
\]
\[
(\ell+1)b-k\ell=4(\ell+1).
\]
Hence both sides of \eqref{eq:theta-condition} equal
\[
4(\ell+1)(2\ell+1)^2.
\]

\medskip

\noindent
\textbf{Case (2):} $k=\ell^2+3\ell+3$, $b=\ell^2+3\ell+4$.

\[
b-2=(\ell+1)(\ell+2),
\quad
k\ell+1=(\ell+1)^3,
\quad
(\ell+1)b-k\ell=(\ell+2)^2.
\]
Hence both sides of \eqref{eq:theta-condition} equal
\[
(\ell+1)^3(\ell+2)^2.
\]

\medskip

\noindent
\textbf{Case (3):} $k=4\ell^2-3$, $b=4\ell^2+1$.

\[
b-2=4\ell^2-1=(2\ell-1)(2\ell+1),
\]
\[
k\ell+1=4\ell^3-3\ell+1=(\ell+1)(2\ell-1)^2,
\]
\[
(\ell+1)b-k\ell=(2\ell+1)^2.
\]
Hence both sides of \eqref{eq:theta-condition} equal
\[
(\ell+1)(2\ell-1)^2(2\ell+1)^2.
\]

Thus \eqref{eq:theta-condition} holds in all three cases, and therefore $\theta(\xi_{\rm can})=-2$.
\end{proof}

\begin{corollary}\label{cor:SEF} 
 None of the Seifert fibered spaces $Y_\Gamma$  listed in Lemma~\ref{lem:theta-three-families} admit sympletic rational homology ball fillings.
 \end{corollary}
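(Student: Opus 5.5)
The plan is to split the contact structures on $Y_\Gamma$ into two classes, the canonical one and the inconsistent diagram-realizable ones, and rule out a symplectic rational homology ball filling for each class separately. Here $\Gamma$ is one of the graphs of Lemma~\ref{lem:theta-three-families}. By that lemma $\theta(\xi_{\rm can})=-2$, so $\Gamma\in\mathcal{T}$. The graph $\Gamma$ is minimal, negative definite, connected and of genus zero: all leg vertices have weight $-2$, the central weight satisfies $b\ge k\ge 2$, and negative definiteness is the condition $e(Y)=-b+k\ell/(\ell+1)<0$, which holds in each case. Hence Theorem~\ref{thm:consistent} and Proposition~\ref{prop:minimum} apply.

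\emph{Inconsistent diagram-realizable structures.} Corollary~\ref{cor:S} applies verbatim, because $\Gamma\in\mathcal{T}$. This yields $\theta(\xi)>-2$, and Lemma~\ref{lem:rhb} then excludes a rational homology ball filling.

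\emph{The canonical structure.} Here $\theta$ gives no obstruction, since $\theta(\xi_{\rm can})=-2$. I would instead run the Stipsicz--Szab\'o--Wahl lattice argument for symplectic fillings rather than smoothings. Suppose $W$ is a symplectic rational homology ball filling of $(Y_\Gamma,\xi_{\rm can})$, and cap it off. Gluing $W$ to a concave cap, or equivalently forming the closed manifold $X_\Gamma\cup_{Y}(-W)$ after reversing orientations as in \cite{StipsiczSzaboWahl08}, gives a closed negative-definite $4$-manifold with $b_2=N$. Donaldson's theorem makes its intersection lattice diagonal, so the plumbing lattice embeds into $(\Z^N,N\langle-1\rangle)$. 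The symplectic (or Stein) structure, whose first Chern class restricts to $K_{\rm can}$ by Theorem~\ref{thm:consistent}, forces the adjunction equality with $K=\sum E_i$. Hence $\Gamma\in\mathcal{S}$.

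The contradiction then comes from a valency count: every node of a graph in $\mathcal{S}$ has valency at most $4$. In case (1) the central node has valency $k=4\ell+4\ge 8$; in case (2) it has $k=\ell^2+3\ell+3\ge 7$; in case (3) with $\ell\ge2$ it has $k=4\ell^2-3\ge 13$. So $\Gamma\notin\mathcal{S}$, which excludes the filling.

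The main obstacle is the step asserting that SSW's Corollary~2.5 uses only a symplectic rational homology ball filling of $\xi_{\rm can}$, so that the lattice embedding and adjunction condition genuinely follow from a filling and not from a smoothing. I would check this by redoing their Donaldson-plus-adjunction argument directly.

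A secondary issue is the degenerate case (3) with $\ell=1$. There $k=1$ and $\Gamma$ is the linear chain $(-5,-2)$, namely the lens space $L(9,2)$, which does bound a rational ball. This case must be excluded from the statement, either by requiring $\ell\ge2$ in case (3) or by noting it explicitly. Finally, the statement as written covers only $\xi_{\rm can}$ and the diagram-realizable structures; tight structures outside this class are not addressed, and I would phrase the corollary accordingly.
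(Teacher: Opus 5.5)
\textbf{The gap: contact structures outside the diagram-realizable class.} You do not prove the statement as written, and your last sentence concedes this. The corollary says that $Y_\Gamma$ carries \emph{no} contact structure with a symplectic rational homology ball filling. Your two cases cover only $\xi_{\rm can}$ and the inconsistent diagram-realizable structures. A fillable contact structure on $Y_\Gamma$ that does not arise from Legendrian surgery on the plumbing diagram is never excluded, and weakening the statement does not close this.

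The missing idea is the first step of the paper's proof. By the classification of Cavallo--Matkovi\v{c}~\cite{CavalloMatkovic2026b}, every symplectically fillable contact structure on these negative-definite Seifert fibered spaces is diagram-realizable. A symplectic rational homology ball filling in particular makes $\xi$ fillable, so this reduces the corollary exactly to your two cases. For the inconsistent ones, Corollary~\ref{cor:S} does apply verbatim, as you say.

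\textbf{The canonical structure.} Here you take a different route from the paper, which simply cites \cite[Theorem~1.4]{BhupalStipsicz11}. Redoing the lattice argument works, but the adjunction step needs its real justification rather than an appeal to Theorem~\ref{thm:consistent}:
\begin{itemize}
\item The Spin$^c$ structures of the Stein structure on $X_\Gamma$ and of the filling $W$ agree on $Y$. They therefore glue to a characteristic class $\tilde K$ on $Z=X_\Gamma\cup(-W)$ restricting to $K_{\rm can}$.
\item Since $W$ is a rational ball, $\tilde K^2=K_{\rm can}^2=\theta(\xi_{\rm can})+2-N=-N$.
\item A characteristic vector of square $-N$ in $N\langle -1\rangle$ has all coefficients $\pm1$. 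After sign changes, $\tilde K=\sum E_i$, which gives the adjunction equality.
\end{itemize}
With that, $\Gamma\in\mathcal{S}$ and your valency count finishes the argument. Your route uses only the Stipsicz--Szab\'o--Wahl classification of $\mathcal{S}$, in place of the finer Bhupal--Stipsicz classification.

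\textbf{The degenerate case.} Your caveat about case (3) with $\ell=1$ is correct, and the paper does not address it. There $\Gamma$ is the chain $(-5,-2)$ and $Y_\Gamma=L(9,2)$. The pair $(L(9,2),\xi_{\rm can})$ is filled by the Milnor fiber of the Wahl singularity $\tfrac{1}{9}(1,2)$, which is a rational homology ball. So the corollary as stated is false in that case and should require $\ell\ge 2$ in case (3).
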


\begin{proof} Let $\Gamma$ be any star-shaped plumbing graph  listed   in Lemma~\ref{lem:theta-three-families}. All symplectically fillable contact structures on the Seifert fibered space $Y_\Gamma$ are diagram-realizable by 
~\cite{CavalloMatkovic2026b}. The canonical contact structure does not admit a symplectic rational homology ball filling by ~\cite[Theorem 1.4]{BhupalStipsicz11}. None of the other diagram-realizable contact structures admit symplectic  rational homology ball fillings either, by Corollary~\ref{cor:S} and Theorem~\ref{thm:consistent}. 
\end{proof}

\begin{lemma}\label{lem:theta-sporadic-examples}
Let $\Gamma$ be a star-shaped plumbing graph with $k$ legs, each leg a linear chain of length $\ell$, all vertices having framing $-2$, and let the central vertex have framing $-b$. Then the canonical contact structure $\xi_{\rm can}$ on the Seifert fibered space $Y_\Gamma$ satisfies $\theta(\xi_{\rm can})=-2$ in each of the following cases:
\begin{enumerate}
\item[(1)] $\ell=1$, $k=4$, $b=7$,
\item[(2)] $\ell=2$, $k=3$, $b=9$,
\item[(3)] $\ell=1$, $k=7$, $b=4$,
\item[(4)] $\ell=1$, $k=8$, $b=5$,
\item[(5)] $\ell=1$, $k=9$, $b=7$.
\end{enumerate}
Moreover, none of these examples is covered by Lemma~\ref{lem:theta-three-families}.
\end{lemma}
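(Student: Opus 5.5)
The plan is to reuse the reduction carried out in the proof of Lemma~\ref{lem:theta-three-families}. There, substituting into Theorem~\ref{thm:main-seifert} the leg data $p=\ell+1$, $q=q^*=\ell$, $I(p/q)=-\ell$ for a chain of $\ell$ vertices of framing $-2$ gave
\[
\theta(\xi_{\rm can})=k\ell-1+\frac{(b-2)^2}{-b+\frac{k\ell}{\ell+1}}.
\]
Hence $\theta(\xi_{\rm can})=-2$ is equivalent to the Diophantine condition \eqref{eq:theta-condition},
\[
(\ell+1)(b-2)^2=(k\ell+1)\bigl((\ell+1)b-k\ell\bigr).
\]
That derivation is independent of the particular values of $k,\ell,b$. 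So it suffices to check, for each of the five triples, two things: that the hypotheses of Theorem~\ref{thm:main-seifert} hold, and that \eqref{eq:theta-condition} is satisfied.

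\textbf{Hypotheses.} Here $e_0=-b\le -2$. The rational Euler number is $e(Y)=-b+k\ell/(\ell+1)$, and I will check that it is negative in each case:
\begin{itemize}
\item[(1)] $e=-7+2=-5$;
\item[(2)] $e=-9+2=-7$;
\item[(3)] $e=-4+7/2=-1/2$;
\item[(4)] $e=-5+4=-1$;
\item[(5)] $e=-7+9/2=-5/2$.
\end{itemize}
Since $e<0$, the graph is negative definite and also minimal, so Theorem~\ref{thm:main-seifert} applies. In particular the denominator in the formula above is nonzero.

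\textbf{Verification of \eqref{eq:theta-condition}.} This is a direct arithmetic check in each case:
\begin{itemize}
\item[(1)] $2\cdot 25=50=5\cdot 10$;
\item[(2)] $3\cdot 49=147=7\cdot 21$;
\item[(3)] $2\cdot 4=8=8\cdot 1$;
\item[(4)] $2\cdot 9=18=9\cdot 2$;
\item[(5)] $2\cdot 25=50=10\cdot 5$.
\end{itemize}
Each identity gives $\theta(\xi_{\rm can})=-2$.

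\textbf{Not covered by Lemma~\ref{lem:theta-three-families}.} I will compare the triple $(\ell,k,b)$ with the three families at the same $\ell$:
\begin{itemize}
\item For $\ell=1$ the families give $(k,b)=(8,8)$, $(7,8)$ and $(1,5)$. None of these equals $(4,7)$, $(7,4)$, $(8,5)$ or $(9,7)$.
\item For $\ell=2$ the families give $(k,b)=(12,12)$, $(13,14)$ and $(13,17)$. None of these equals $(3,9)$.
\end{itemize}
The main point needing care is this comparison: the graph determines $\ell$, $k$ and $b$, so a triple lies in a family only if it matches that family at the same $\ell$, and the check above exhausts those possibilities. Otherwise the proof is routine verification once the reduction to \eqref{eq:theta-condition} is imported.
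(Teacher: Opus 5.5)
Your proposal is correct and follows the paper's proof essentially line for line. Like the paper, you import the reduction of $\theta(\xi_{\rm can})=-2$ to $(\ell+1)(b-2)^2=(k\ell+1)\bigl((\ell+1)b-k\ell\bigr)$ from Lemma~\ref{lem:theta-three-families}, verify it arithmetically in the five cases, and compare with the three families at $\ell=1$ and $\ell=2$. Your explicit check that $e_0=-b\le -2$ and $e(Y)<0$ in each case is a small, harmless addition the paper leaves implicit, though minimality comes from all weights being $\le -2$, not from $e(Y)<0$.
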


\begin{proof}
As in the proof of Lemma~\ref{lem:theta-three-families}, since each leg is a chain of length $\ell$ with all framings equal to $-2$, we have
\[
[2,\dots,2]=\frac{\ell+1}{\ell},
\]
so that
\[
p=\ell+1,\qquad q=\ell,\qquad q^*=\ell,\qquad I(p/q)=-\ell.
\]
Hence the general formula for $\theta(\xi_{\rm can})$ given in Theorem~\ref{thm:main-seifert} becomes
\[
\theta(\xi_{\rm can})
=
k\ell-1+\frac{(b-2)^2}{-b+\frac{k\ell}{\ell+1}}.
\]
Therefore $\theta(\xi_{\rm can})=-2$ is equivalent to
\begin{equation}\label{eq:sporadic-theta-condition}
(\ell+1)(b-2)^2=(k\ell+1)\big((\ell+1)b-k\ell\big).
\end{equation}
We now verify \eqref{eq:sporadic-theta-condition} in each case.

\medskip

\noindent
\textbf{Case (1):} $(\ell,k,b)=(1,4,7)$.

\[
(\ell+1)(b-2)^2=2\cdot 5^2=50,
\]
and
\[
(k\ell+1)\big((\ell+1)b-k\ell\big)
=(4+1)(2\cdot 7-4)=5\cdot 10=50.
\]
Thus \eqref{eq:sporadic-theta-condition} holds.

\medskip

\noindent
\textbf{Case (2):} $(\ell,k,b)=(2,3,9)$.

\[
(\ell+1)(b-2)^2=3\cdot 7^2=147,
\]
and
\[
(k\ell+1)\big((\ell+1)b-k\ell\big)
=(3\cdot 2+1)(3\cdot 9-3\cdot 2)=7\cdot 21=147.
\]
Thus \eqref{eq:sporadic-theta-condition} holds.

\medskip

\noindent
\textbf{Case (3):} $(\ell,k,b)=(1,7,4)$.

\[
(\ell+1)(b-2)^2=2\cdot 2^2=8,
\]
and
\[
(k\ell+1)\big((\ell+1)b-k\ell\big)
=(7+1)(2\cdot 4-7)=8\cdot 1=8.
\]
Thus \eqref{eq:sporadic-theta-condition} holds.

\medskip

\noindent
\textbf{Case (4):} $(\ell,k,b)=(1,8,5)$.

\[
(\ell+1)(b-2)^2=2\cdot 3^2=18,
\]
and
\[
(k\ell+1)\big((\ell+1)b-k\ell\big)
=(8+1)(2\cdot 5-8)=9\cdot 2=18.
\]
Thus \eqref{eq:sporadic-theta-condition} holds.

\medskip

\noindent
\textbf{Case (5):} $(\ell,k,b)=(1,9,7)$.

\[
(\ell+1)(b-2)^2=2\cdot 5^2=50,
\]
and
\[
(k\ell+1)\big((\ell+1)b-k\ell\big)
=(9+1)(2\cdot 7-9)=10\cdot 5=50.
\]
Thus \eqref{eq:sporadic-theta-condition} holds.

Therefore $\theta(\xi_{\rm can})=-2$ in all five cases.

Finally, none of these examples appears in Lemma~\ref{lem:theta-three-families}. Indeed, for $\ell=1$ that lemma gives only
\[
(k,b)=(8,8),\ (7,8),\ (1,5),
\]
and for $\ell=2$ it gives only
\[
(k,b)=(12,12),\ (13,14),\ (13,17).
\]
These do not include any of the five triples listed above.
\end{proof}

We recover Beke's star-shaped "$\mathrm{fpp}(n)$" trees \cite[Example~5.1]{Beke25}  below. 

\begin{corollary}
Let $n>1$, and let $\Gamma_n$ be the star-shaped plumbing graph with
\[
k=n^2+n+1
\]
legs, each of length $n-1$, all framings $-2$, and central framing $-(n^2+n+2)$. Then $\theta(\xi_{\rm can})=-2$.
\end{corollary}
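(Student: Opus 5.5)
We specialize the reduced formula from the proofs of Lemmas~\ref{lem:theta-three-families} and~\ref{lem:theta-sporadic-examples}. For a star-shaped graph with $k$ legs, each a chain of $\ell$ vertices of framing $-2$, and central framing $-b$, those proofs show that $\theta(\xi_{\rm can})=-2$ is equivalent to
\[
(\ell+1)(b-2)^2=(k\ell+1)\big((\ell+1)b-k\ell\big).
\]
Here we take $\ell=n-1$, $k=n^2+n+1$ and $b=n^2+n+2$, and check this identity directly.

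First we check the hypotheses of Theorem~\ref{thm:main-seifert}. The central weight is $e_0=-b\le -2$, so the Legendrian surgery argument applies without appealing to Remark~\ref{rem:e0-minus-one}. We also need
\[
e(Y)=-b+\frac{k\ell}{\ell+1}<0 .
\]
Since $\ell/(\ell+1)<1$, we have $k\ell/(\ell+1)<k<b$, so this holds. Finally, $n>1$ gives $\ell\ge 1$, so every leg is nonempty.

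Next we compute the factors. Since $\ell+1=n$, we get $b-2=n(n+1)$, and the left side is
\[
n\cdot n^2(n+1)^2=n^3(n+1)^2 .
\]
On the right, $k\ell+1=(n^2+n+1)(n-1)+1=n^3-1+1=n^3$. Also
\[
(\ell+1)b-k\ell=n(n^2+n+2)-(n^3-1)=n^2+2n+1=(n+1)^2 .
\]
So the right side is $n^3(n+1)^2$ as well, the identity holds, and $\theta(\xi_{\rm can})=-2$.

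The only real work is the factorization $k\ell+1=n^3$, which follows from $(n^2+n+1)(n-1)=n^3-1$.
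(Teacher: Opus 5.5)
Your proof is correct and is essentially the paper's argument: the paper just invokes case (2) of Lemma~\ref{lem:theta-three-families} with $\ell=n-1$, since $\ell^2+3\ell+3=n^2+n+1$. Your direct check reproduces that case's factorizations $b-2=(\ell+1)(\ell+2)$, $k\ell+1=(\ell+1)^3$ and $(\ell+1)b-k\ell=(\ell+2)^2$, now written in terms of $n$.
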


\begin{proof}
This is case {\rm (2)} of Lemma~\ref{lem:theta-three-families} with $\ell=n-1$.
\end{proof}

\begin{lemma}
The smallest star-shaped plumbing graph with $k\ge 3$ legs, each leg a linear chain of length $\ell\ge 1$ with all framings equal to $-2$, and with central framing $-b$, for which the canonical contact structure satisfies $\theta(\xi_{\rm can})=-2$, is given by
\[
\ell=1,\qquad k=4,\qquad b=7.
\]
In particular, the smallest such graph has $5$ vertices.
\end{lemma}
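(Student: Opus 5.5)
The plan is to reduce everything to the Diophantine condition already derived in the proof of Lemma~\ref{lem:theta-three-families}. For a star with $k$ legs of $-2$ chains of length $\ell$ and central weight $-b$, Theorem~\ref{thm:main-seifert} shows that $\theta(\xi_{\rm can})=-2$ is equivalent to
\[
(\ell+1)(b-2)^2=(k\ell+1)\bigl((\ell+1)b-k\ell\bigr).
\]
This is subject to two requirements. First, negative definiteness, i.e.\ $e(Y)<0$, which here reads $(\ell+1)b>k\ell$. Second, minimality, i.e.\ $b\ge 2$; in fact $b\ge 3$ is forced, since for $b=2$ the left side vanishes while the right side is positive. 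The number of vertices is $N=1+k\ell$. So ``smallest'' means minimizing $k\ell$ subject to $k\ge 3$, $\ell\ge1$. I interpret the statement as asserting that $N=5$ is the minimum, and that the only solution with $k\ell\le 4$ is $(\ell,k,b)=(1,4,7)$.

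Existence is Case~(1) of Lemma~\ref{lem:theta-sporadic-examples}, which gives $N=5$. For minimality I enumerate all $(\ell,k)$ with $k\ge3$ and $k\ell\le 4$. These are $(1,3)$, $(1,4)$, and no others, since $\ell\ge2$ forces $k\ell\ge6$. For each, the condition becomes a quadratic in $b$, and I solve it over the integers.

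For $(\ell,k)=(1,3)$ the equation is $2(b-2)^2=4(2b-3)$, i.e.\ $b^2-8b+10=0$. Its discriminant is $24$, which is not a square, so there is no integer solution.

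For $(\ell,k)=(1,4)$ the equation is $2(b-2)^2=5(2b-4)=10(b-2)$, so $b=2$ or $b=7$. The root $b=2$ is excluded, since it gives $e=-2+4\cdot\tfrac12=0$ and hence a degenerate form. This leaves only $b=7$, and the form there is negative definite since $14>4$.

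There is no real obstacle; the only care needed is in fixing what ``smallest'' means. I would also remark on a tie-breaking point: if vertex count is the measure, no case with $\ell\ge2$ can compete, because $N\ge 7$ there. The uniqueness in the $(1,4)$ case and the impossibility for $(1,3)$ are the content of the claim. I would also check $b\ge k\ell/(\ell+1)$ in each admissible root to confirm negative definiteness.
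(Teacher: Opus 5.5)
Your proposal is correct and is essentially the paper's argument: reduce to the condition $(\ell+1)(b-2)^2=(k\ell+1)\bigl((\ell+1)b-k\ell\bigr)$, rule out $k=3,\ \ell=1$ via the discriminant $24$ of $b^2-8b+10$, and observe that $k\ell+1=5$ forces $k=4,\ \ell=1$ with $b=7$. You also explicitly discard the spurious root $b=2$, where $e(Y)=0$ and the derivation of the condition breaks down; the paper leaves this implicit, though note that your earlier claim that the right side is positive at $b=2$ holds only once $(\ell+1)b>k\ell$ is imposed.
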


\begin{proof}
For these symmetric graphs, the condition $\theta(\xi_{\rm can})=-2$ is equivalent to
\begin{equation}\label{eq:smallest-example}
(\ell+1)(b-2)^2=(k\ell+1)\big((\ell+1)b-k\ell\big).
\end{equation}
The total number of vertices is $1+k\ell$. Since $k\ge 3$ and $\ell\ge 1$, the smallest possible number of vertices is $4$, which corresponds to $(k,\ell)=(3,1)$.

In that case, \eqref{eq:smallest-example} becomes
\[
2(b-2)^2=4(2b-3),
\]
or equivalently
\[
b^2-8b+10=0.
\]
Its discriminant is $24$, which is not a square, so there is no integer solution $b$. Hence there is no example with $4$ vertices.

The next possibility is $1+k\ell=5$, which forces $(k,\ell)=(4,1)$. Then \eqref{eq:smallest-example} becomes
\[
2(b-2)^2=5(2b-4).
\]
For $b=7$, both sides are equal to $50$. Therefore $\theta(\xi_{\rm can})=-2$ for $(\ell,k,b)=(1,4,7)$.

Thus the smallest such example has $5$ vertices.
\end{proof}

\bibliography{references}
\bibliographystyle{plain}

\end{document}